\tikzset{dynkdot/.style={circle,draw,scale=.38}}
\theoremstyle{plain}
\newtheorem{theorem}{Theorem}[section]
\newtheorem{lem}[theorem]{Lemma}
\newtheorem{prop}[theorem]{Proposition}
\newtheorem{cor}[theorem]{Corollary}
\newtheorem{convention}[theorem]{Convention}
\theoremstyle{definition}
\newtheorem{definition}[theorem]{Definition}
\newtheorem{example}[theorem]{Example}
\newtheorem{remark}[theorem]{Remark}
\numberwithin{equation}{section} \numberwithin{figure}{section}
\numberwithin{table}{section}
\newcommand{\seteq}{\mathbin{:=}}
\newcommand{\Sp}{\operatorname{Sp}}
\newcommand{\USp}{\operatorname{USp}}
\newcommand{\U}{\operatorname{U}}
\newcommand{\SU}{\operatorname{SU}}
\newcommand{\ssqcup}{\mathop{\mbox{\normalsize$\bigsqcup$}}\limits}
\newenvironment{red}{\relax\color{red}}{\relax}
\newenvironment{blue}{\relax\color{blue}}{\hspace*{.5ex}\relax}
\newenvironment{jaune}{\relax\color{green}}{\hspace*{.5ex}\relax}
\newcommand{\ber}{\begin{red}}
\newcommand{\er}{\end{red}}
\newcommand{\beb}{\begin{blue}}
\newcommand{\eb}{\end{blue}}
\newcommand{\bjn}{\begin{jaune}}
\newcommand{\ejn}{\end{jaune}}
\newcommand{\Z}{\mathbb{Z}}
\newcommand{\C}{\mathbb{C}}
\newcommand{\m}{\mathfrak{m}}
\newcommand{\tyg}{\tiny\young}
\newcommand{\ep}{\epsilon}
\newcommand{\la}{\lambda}
\newcommand{\lap}{{\lambda'}}
\newcommand{\ee}{\end{enumerate}}
\newcommand{\ben}{\begin{enumerate}[{\rm (1)}]}
\newcommand{\bnum}{\begin{enumerate}[{\rm (i)}]}
 \newcommand{\bna}{\begin{enumerate}[{\rm (a)}]}
\newcommand{\bnA}{\begin{enumerate}[{\rm (A)}]}
\newcommand{\bc}{\begin{cases}}
\newcommand{\ec}{\end{cases}}
\begin{document}

\title[Auto-correlation functions for unitary groups]
{Auto-correlation functions for unitary groups}

\author[K.-H. Lee]{Kyu-Hwan Lee$^{\star}$}
\thanks{$^{\star}$This work was partially supported by a grant from the Simons Foundation (\#712100).}
\address{Department of
Mathematics, University of Connecticut, Storrs, CT 06269, U.S.A.}
\email{khlee@math.uconn.edu}

\author[S.-j. Oh]{Se-jin Oh$^{\dagger}$}
\address{Department of Mathematics, Ewha Womans University, Seoul 120-750, South Korea}
\email{sejin092@gmail.com}
\thanks{$^{\dagger}$The research of S.-j.\ Oh was supported by the Ministry of Education of the Republic of Korea and the National Research Foundation of Korea (NRF-2019R1A2C4069647)}

\date{\today}

\begin{abstract}
We compute the auto-correlations functions of order $m\ge 1$ for the characteristic polynomials of random matrices from certain subgroups of the unitary groups $\U(2)$ and $\U(3)$ by applying branching rules. These subgroups can be understood as analogs of Sato--Tate groups of $\USp(4)$ in our previous paper. This computation yields symmetric polynomial identities  with $m$-variables involving irreducible characters of $\U(m)$ for all $m \ge 1$ in an explicit, uniform way. 
\end{abstract}

\maketitle

\section{Introduction}

\subsection{Auto-correlation functions}  The distribution of characteristic polynomials of random matrices has been of great interest for their applications in mathematical physics and number theory.
Since Keating and Snaith \cite{KeaSn,KeaSn-1} computed averages of characteristic polynomials of random matrices in 2002 motivated in part by connections to number theory and in part by the importance of these averages in quantum chaos \cite{AS}, it has become clear that averages of characteristic polynomials are fundamental for random matrix models \cite{BDS,BS,BH,BH-2,FS,FS-1,MN}.

On the way of these developments, the auto-correlation functions of the distributions of
characteristic polynomials in the compact classical groups were computed by
Conrey, Farmer, Keating, Rubinstein and Snaith \cite{CFKRS,CFKRS-1} and by
Conrey, Farmer and Zirnbauer~\cite{CFZ,CFZ-1}. Later, Bump and Gamburd \cite{BG} obtained
different derivations of the formulas starting from (analogues of) the dual Cauchy identity and adopting a representation-theoretic method. Their results show that the auto-correlation functions are actually combinations of characters of classical groups.

For example, for the symplectic groups, an analogue of the dual Cauchy identity is due to Jimbo--Miwa \cite{JM} and Howe \cite{Howe}:
\begin{equation} \label{eqn-111} \prod_{i=1}^m \prod_{j=1}^g (x_i +x_i^{-1}+t_j+t_j^{-1}) = \sum_{\lambda \trianglelefteq (g^m)} \chi_\lambda^{\Sp(2m)}(x_1^{\pm 1}, \dots , x_m^{\pm 1})
\chi_{\tilde \lambda}^{\Sp(2g)}(t_1^{\pm 1}, \dots , t_g^{\pm 1}) ,\end{equation}
where $\chi^{\Sp(2m)}_{\lambda}$ is the irreducible character of $\Sp(2m, \mathbb C)$ associated with the partition $\lambda \trianglelefteq (g^m)$ and we set $\tilde \lambda=(m-\lambda'_g, \dots , m-\lambda'_1)$ with $\lambda' =(\lambda'_1, \dots , \lambda'_g)$ the transpose of $\lambda$. This identity can be considered as a reflection of Howe duality. Using this identity, Bump and Gamburd computed
the auto-correlation functions to obtain
\begin{equation} \label{eq-ssspp}
\int_{\USp(2g)}  \left ( \prod_{j=1}^m \det (I+ x_j \gamma) \right ) d \gamma = (x_1 \dots x_m)^g \, \chi^{\Sp(2m)}_{(g^m)} (x_1^{\pm 1}, \dots , x_m^{\pm 1}). \end{equation}

\subsection{Sato--Tate groups} The celebrated Sato--Tate conjecture for elliptic  curves (i.e. genus $1$ curves) predicts that the distribution of Euler factors of an elliptic curve is the same as the distribution of characteristic polynomials of random matrices from $\SU(2)$, $\U(1)$ or $N(\U(1))$, where $N(\U(1))$ is the normalizer of $\U(1)$ in $\SU(2)$. The conjecture is proven (under some conditions) by the works of R. Taylor, jointly with L. Clozel, M. Harris, and N. Shepherd-Barron \cite{CHT,T,HSBT}. For curves of higher genera,
 J.-P. Serre, N. Katz and P. Sarnak  \cite{Ser,KS} proposed a generalized Sato--Tate conjecture. Pursuing this
direction, K. S. Kedlaya and A. V. Sutherland \cite{KS09} and later
together with F. Fit{\' e} and V. Rotger \cite{FKRS}  made a list of
52 compact subgroups of $\USp(4)$ called {\em Sato--Tate groups}
that would classify all the distributions of Euler factors for
abelian surfaces. Recently, Fit{\'e}, Kedlaya and Sutherland showed that there are 410 Sato--Tate groups for abelian threefolds \cite{FKS-2}.

\subsection{Our previous work}
Inspired by the approach of  Bump and Gamburd, in a previous paper \cite{LO}, the authors computed the auto-correlation functions of characteristic polynomials for Sato--Tate groups $H \le \USp(4)$, which appear in the generalized Sato--Tate conjecture for genus $2$ curves. The result of \cite{LO} can be described as follows. Let $H \le \USp(4)$ be a Sato--Tate group. Then, for arbitrary $m \in \mathbb Z_{\ge 1}$, we have
\begin{align}
\int_{H}  \prod_{j=1}^m \det (I+ x_j \gamma)  d
\gamma & =  (x_1 \cdots x_m)^2 \sum_{b=0}^{m} \sum_{z=0}^{\lfloor \frac {m-b} 2 \rfloor} \mathfrak m_{(b+2z,b)} \chi_{(2^{m-b-2z}, 1^{2z})}^{\Sp(2m)} , \label{eqn-it}
\end{align}
where the coefficients $\mathfrak m_{(b+2z,b)}$ are the multiplicities of the trivial representation in the restrictions $\chi^{\Sp(4)}_{(b+2z,b)}\big |_H$ and are explicitly given in the paper \cite{LO} for all the Sato--Tate groups of abelian surfaces. Exploiting the representation-theoretic meaning of $\mathfrak m_{(b+2z,b)}$, the authors obtained this result by establishing branching rules for  $\chi^{\Sp(4)}_{(b+2z,b)}\big |_H$.

Moreover, since most of the Sato--Tate groups are disconnected, we can decompose the integral in \eqref{eqn-it} according to coset decompositions, and find that the characteristic polynomials over some cosets are \emph{independent} of the elements of the cosets. Combining this observation with the computations of branching rules, we obtain families of non-trivial identities of irreducible characters of $\Sp(2m, \mathbb C)$ for all $m \in \Z_{\ge1}$.
For example, we have, for any $m \in \mathbb Z_{\ge 1}$,
\begin{align*}
\tag{1}
\prod_{i=1}^m (x_i^2+x_i^{-2}) &= \sum_{b=0}^m \sum_{z=0}^{\lfloor \frac {m-b} 2 \rfloor} \psi_4(z,b) \chi_{(2^{m-b-2z}, 1^{2z})}^{\Sp(2m)} ,
\end{align*}
where $\psi_4(z,b)$ is defined on the congruence classes of $z$ and $b$ modulo $4$ by the table
\begin{center} {\scriptsize
\begin{tabular}{|c||c|c|c|c|}
\hline
$z\backslash b$ & 0&1&2&3\\ \hline \hline
0&$1$ & $-1$&$0$& $0$ \\ \hline
1&$0$&$1$ & $-1$ & $0$ \\ \hline
2& $-1$ & $1$ & $0$&$0$ \\ \hline
3&$0$ & $-1$ & $1$ & $0$ \\ \hline
\end{tabular}}\end{center}
Notice that the irreducible characters $\chi_\lambda^{\Sp(2m)}$ are
symmetric functions with the number of terms growing very fast as
$m$ increases, but that the coefficients $\psi_4(z,b)$ are
independent of $m$. In order to produce the left-hand side of the
identities, there must be systematic cancelations in the right-hand
side. 

\subsection{Schur functions}
The Schur functions $S_\la$ form  the \emph{distinguished}
self-dual basis of the ring of symmetric functions.  They appear
naturally in representation theory, algebraic combinatorics,
enumerative combinatorics, algebraic geometry and quantum physics.
In particular, (i) every Schur function corresponds to an
irreducible character of the unitary group, which implies the ring of symmetric functions
form the Grothendieck ring for unitary groups, (ii) it has various
combinatorial realizations in  various  aspects of algebraic
combinatorics. Hence understanding the properties of Schur
functions takes a center stage in these research areas. One of the key
features of understanding Schur functions is how other symmetric
functions can be expressed in the basis of Schur functions; that is,
computing the coefficients of $S_\la$, called \emph{Schur
coefficients}, in the expansions. One of the well-known instances is
the (inverse of) \emph{Kostka matrix}, which can be understood as
Schur coefficients for monomial (complete) symmetric functions  in
algebraic combinatorics, and as \emph{composition
multiplicities} of $V(\la)$ in the permutation
representation $W(\la)$ in representation theory. Recall that
the (inverse of) Kostka matrix is  a  uni-upper triangular matrix with
integer coefficients and entries in the Kostka matrix has a description
in terms of semistandard Young tableaux. However, the closed-form
formulas for entries in the (inverse of) Kostka matrix are not available  in general. Another well-known
instance is \emph{Littlewood--Richardson rule}, which can be
understood as  Schur coefficients for a product of two
Schur functions in algebraic combinatorics, and as
composition multiplicities of $V(\la)$'s in the
 tensor product  $V(\mu)\otimes V(\eta)$ in representation theory.

\subsection{Main Result}

In what follows, we describe the main result of this paper and its application.

\smallskip 

\noindent
({\bf M}) We compute explicitly the auto-correlation functions for  $H=\U(1) \le \U(g)$ $(g=2,3)$ and for the subgroups $H \le \U(g)$ $(g=2,3)$ defined in~\eqref{eq: A Sato-Tate} below. Namely, for any $m \in \Z_{\ge 1}$, we obtain 
$$\int_{H\le \U(g)} \prod_{i=1}^m \det (I+ x_i \gamma)  \, d \gamma  = \sum_{\lambda \trianglelefteq (g^m)} \mathfrak m_{\lambda'}(H)\, S^{\U(m)}_\lambda(\mathbf x),$$
where the coefficients $\mathfrak m_{\lambda'}(H)$ are completely determined. Here $S^{\U(m)}_{\la}(\mathbf x)$ denotes the character of the irreducible  representation $V(\la)$ of the unitary group $\U(m)$, which are Schur functions. 

\smallskip 

\noindent
({\bf A}) As an application of the main result, we give closed-form formulas of Schur coefficients $\mathfrak{c}_{g_1,g_2}^{\la}$ for special infinite families of symmetric functions $\mathfrak{t}^{(m)}_{g_1,g_2}(\mathbf{x})$, which also have simple expansions in terms of monomial symmetric functions
$\{ \mathsf{m}^{(m)}_\la(\mathbf x) \}$. That is, for \emph{any} $m \in \Z_{\ge 1}$ and $1\le g_1+g_2 \le 3$, we obtain 
\begin{align} \label{eq: identities}
\mathfrak{t}^{(m)}_{g_1,g_2}(\mathbf{x}) \seteq  \prod_{i=1}^{m}  (1\pm x_i^{g_1})(1 \pm x_i^{g_2}) = \sum_{\la \trianglelefteq ( (g_1+g_2)^m) } \mathfrak{c}_{g_1,g_2}^{\la}S^{\U(m)}_{\la} (\mathbf x),
\end{align}
where the Schur coefficients $\mathfrak{c}_{g_1,g_2}^{\la}$ are given in closed-form formulas.
Thus $\mathfrak{c}_{g_1,g_2}^{\la}$ can be understood as a simple 
combination of entries in the inverse of Kostka matrix as
$\mathfrak{t}^{(m)}_{g_1,g_2}(\mathbf{x})$ has an expansion in $ \mathsf{m}^{(m)}_\la(\mathbf x)$ with coefficients from $\{ 1,0,-1 \}$.

\medskip 

Let us explain the main result and its application in more detail.  To adopt the same strategy as in $\USp(m)$, we
first introduce several subgroups $H_{g}$, $H'_{g,4}$ and $H_{g,4}$
of $\U(g)$ $(g=2,3)$ as follows, which play the role of Sato--Tate groups in
\cite{LO}:
\begin{equation}\label{eq: A Sato-Tate}
\begin{aligned}
H_2 &:= \langle  \U(1),J_2 \rangle,  \quad    H'_{2,4} := \langle \U(1), \pmb \zeta_{2,4} \rangle \le   H_{2,4}:= \langle \U(1), J_2, \pmb \zeta_{2,4} \rangle \le \U(2), \\
H_3 &: = \langle \U(1), J_3 \rangle,   \quad  H'_{3,4}:= \langle \U(1),  \pmb \zeta_{3,4} \rangle \le   H_{3,4}:= \langle \U(1), J_3, \pmb \zeta_{3,4} \rangle \le \U(3), \end{aligned}
\end{equation}
where
$$
J_2 \seteq \left( \begin{matrix} 0 & 1 \\ -1 & 0  \end{matrix}\right), \ \
\pmb \zeta_{2,4} := \left( \begin{matrix}
\sqrt{-1} & 0  \\
0 & \sqrt{-1}
\end{matrix}\right), \ \
J_3: = \left(
\begin{matrix}
0 & 1 & 0 \\ -1 & 0 & 0 \\  0& 0 & 1
\end{matrix}
\right), \ \
\pmb \zeta_{3,4} := \left( \begin{matrix}
 \sqrt{-1} & 0 & 0 \\
0 & \sqrt{-1} & 0 \\
0 & 0 & 1
\end{matrix}\right).
$$

Considering these subgroups in $\U(g)$ $(g=2,3)$, we present the main result more precisely.

\smallskip

\noindent \textbf{Main Theorem.} Let $H \le \U(g)$ $(g=2,3)$ be  $\U(1)$ or a group  in~\eqref{eq: A Sato-Tate}. Then, for any $m \in \Z_{\ge 1}$, we have
$$\int_{H\le \U(g)} \prod_{i=1}^m \det (I+ x_i \gamma)  \, d \gamma  = \sum_{\lambda \trianglelefteq (g^m)} \mathfrak m_{\lambda'}(H)\, S^{\U(m)}_\lambda(\mathbf x)$$
where the coefficient $ \mathfrak{m}_{\lambda'}(H)$ are the \emph{composition multiplicities} of
the trivial representation in the restriction
$\chi^{\U(g)}_{\la'}|_H$ and are explicitly given in {\rm
Theorem~\ref{thm :g21}, Theorem~\ref{thm:g24}, Theorem~\ref{thm
:g3}} and {\rm Theorem~\ref{thm :g34}.}

\smallskip

This theorem can be interpreted as a result on branching rules from
$\U(g)$ to $H$ in representation theory. To prove the above theorem, we analyze the representation structure of $V(\la')$ over $\U(g)$ with respect to $H$
and determine a certain list of linearly independent subsets in
$V(\la')$.

\smallskip

For an application of \textbf{Main Theorem}, we observe (i) $H$'s are decomposed into disconnected cosets
\begin{align*}
H_2&= \U(1) \sqcup \underline{J_2 \U(1)}, & H_{2,4} &= H'_{2,4} \sqcup \underline{J_2\U(1)}  \sqcup \underline{\pmb \zeta_{2,4} J \U(1)} , \\
H_3 & = \U(1) \sqcup \underline{J_3\U(1)}, & H_{3,4} &= H'_{3,4} \sqcup \underline{J_3\U(1)}  \sqcup \underline{\pmb \zeta_{3,4} J \U(1)} ,
\end{align*}
and (ii)
the characteristic polynomials in the underlined cosets of $H$ are independent of elements of the coset.  These observations enable us to obtain closed-form identities involving Schur functions.

\smallskip

\noindent \textbf{Application.} For arbitrary $m \in \Z_{\ge 1}$, we have the following
identities$\colon$
\bnA
\item  \label{it: mainA} $\displaystyle\prod_{i=1}^m (1+x_i^2) = \sum_{b=0}^m \sum_{j=0}^{\lfloor \frac {m-b} 2 \rfloor} (-1)^j \ S^{\U(m)}_{(2^b, 1^{2j})}(\mathbf x)$ $\phantom{LLLLLLLLLL} ($derived from $H_2$,   {\rm Theorem~\ref{thm :g21}}$)$,
\item  \label{it: mainB} $\dfrac{1}{2} \left( \displaystyle\prod_{i=1}^m (1+x_i^2)+ \prod_{i=1}^m (1-x_i^2) \right) = \displaystyle\sum_{\substack{  (b+2j,b) \trianglelefteq (m^2)    \\   b+j \equiv_2 0 }} (-1)^{j} S^{\U(m)}_{(2^{b}, 1^{2j})}(\mathbf x)$
$ \\ \phantom{LLLLLLLLLLLLLLLLLLLLLLLLLLLLL}($derived from $H_{2,4}$ and $H'_{2,4}$,   {\rm Theorem~\ref{thm:g24}}$)$,
\item  \label{it: mainC} $\displaystyle \prod_{i=1}^m (1+x_i)(1+x_i^2)  =\sum_{\substack{\la \trianglelefteq (3^m) \\ \la=(3^k,2^b,1^z)}} \tau(z,b)\,  S^{\U(m)}_{\la}(\mathbf x)$ $ \phantom{LLLLL} ($derived from $H_3$,   {\rm Theorem~\ref{thm :g3}}$)$,
\item  \label{it: 4} $\displaystyle \frac 1 2 \left( \prod_{i=1}^m (1+x_i)(1+x_i^2)+  \prod_{i=1}^m (1+x_i)(1-x_i^2) \right)  =\sum_{\la \trianglelefteq (3^m)} \omega_\epsilon(z,b') \,  S^{\U(m)}_{\la}(\mathbf x),$
$ \\ \phantom{LLLLLLLLLLLLLLLLLLLLLLLLLLLLL}($derived from $H_{3,4}$ and $H'_{3,4}$,   {\rm Theorem~\ref{thm :g34}}$)$,
\end{enumerate}
where
\begin{enumerate}
\item[{\rm (i)}] $\tau(z,b) \in \{0,1\}$ is defined on the congruence of $z$ and $b$ modulo $4$ as in the following table
\begin{center} $\tau(z,b)=$ {\scriptsize
\begin{tabular}{|c||c|c|c|c|}
\hline
$z\backslash b$ & 0&1&2&3\\ \hline \hline
0&$1$ & $1$&$0$& $0$ \\ \hline
1&$1$&$0$ & $-1$ & $0$ \\ \hline
2&$0$&$-1$ & $-1$ & $0$ \\ \hline
3&$0$&$0$ & $0$ & $0$ \\ \hline
\end{tabular}}
 \end{center}
\item[{\rm (ii)}] $\omega_\epsilon(z,b')$ depends on the congruence of $z$ and $b$ modulo $2,4$ and given in {\rm Corollary~\ref{cor-fi}}.
\end{enumerate}

\smallskip

Combining the identities  in \textbf{Application} and replacing $x_i$ with $-x_i$, we obtain all the other identities in~\eqref{eq: identities} (see ~\eqref{alP}, Corollary~\ref{cor:fep}, Corollary~\ref{cor:fep2} and Remark~\ref{rem:fep3}). 

\medskip

Note that the identities obtained involves \emph{negative} Schur coefficients and the number of
terms in Schur functions grows enormously as $m$ increases. However, our result implies that such combinations of
Schur functions  have miraculous cancellations and yield symmetric functions with \emph{positive} coefficients.
Furthermore, the identities state that the Schur coefficients  do \emph{not} depend on $m$ (see Example~\ref{ex: g2 huge}, Example~\ref{ex: g3 huge} and Example~\ref{ex: g34 huge}). These identities seem intriguing from the viewpoint of representation
theory and algebraic combinatorics. It might have been difficult for us to expect that such identities exist, without regard to the auto-correlation functions and branching rules of the newly introduced groups in \eqref{eq: A Sato-Tate}.

\subsection{Organization of the paper}
In Section~\ref{sec: Preliminaries}, we review the necessary
backgrounds for auto-correlation functions  and the dual Cauchy identity.
In Section~\ref{sec: g=2}, we compute the auto-correlation functions of $H$'s of $\U(2)$ and establish the corresponding identities involving Schur functions. In Section~\ref{sec: g=3}, we present the auto-correlation functions of $H$'s of $\U(3)$ and consider the corresponding identities
 by analyzing the representation structure
of $V(\la)$ with respect to $H$'s. But we postpone a part of the proof to Section~\ref{sec:
Cardinality}, which is devoted to determine the composition multiplicities of trivial representations in
$\chi^{\U(g)}_{\la'}|_H$'s. This amounts to the proof for $\U(3)$. We
convert this problem into  counting the pairs of integers encoding certain information from representation theory.
 By expressing the cardinalities as closed-form
formulas, we complete the proof.

\begin{convention}
Throughout this paper, we keep the following conventions.
\begin{enumerate}
\item[{\rm (i)}] For a statement $P$, the notation $\delta(P)$ is equal to $1$ or $0$ according to whether $P$ is true or not.
\item[{\rm (ii)}]  For $m,m' \in \Z$ and $k \in \Z_{>0}$, we write $m\equiv_k m'$ if $k$ divides $m-m'$, and $m \not\equiv_k m'$ otherwise.
\end{enumerate}
\end{convention}


\section{Dual Cauchy identity and auto-correlation functions} \label{sec: Preliminaries}

In this section, we fix notations and review the dual Cauchy identity and establish a general formula for the auto-correlation functions of characteristic polynomials.

\medskip

A {\em partition} $\la=(\la_1 \ge \cdots \ge \la_k)$ is a non-increasing sequence of non-negative integers $\la_i$. Define $|\lambda|=\sum_{i=1}^k \lambda_i$ and $\ell(\lambda) =k$. We write $\la=(m^k)$ when $m=\la_1=\cdots=\la_k$. More generally, a partition is written as $(m_1^{k_1}, m_2^{k_2}, \dots , m_s^{k_s})$ for $m_1>m_2> \cdots > m_s$ and $k_i \ge 1$. For two partitions $\la= (\la_1 \ge \cdots \ge \la_k)$ and $\mu=(\mu_1 \ge \cdots \ge  \mu_l )$, we define a partial order $\la \trianglelefteq \mu$ if $k \le l$ and $\la_i \le \mu_i$ for all $i=1,2,\dots , k$. A partition $\la$ corresponds to a Young diagram, and the {\em transpose} $\la'$ is defined to be the partition corresponding to the transpose of the Young diagram of $\lambda$.

Let $\U(g)$ be the unitary group for $g \ge 1$. For a partition $\la$ with at most $g$ parts, let $S^{\U(g)}_\la$ be the Schur function associated with $\la$. It is well-known that $S^{\U(g)}_\lambda$ is the irreducible character of $\U(g)$ with highest weight $\la$. Denote by $V_g(\la)$ the representation space of $S_\lambda^{\U(g)}$. When $g$ is clear from the context, we will simply write $V(\la)$.

\begin{definition} \label{def-m-H}
Let $H$ be a closed subgroup of $\U(g)$. Define $\mathfrak m_{\lambda}(H)$ to be the multiplicity of the trivial representation $1_H$ in the restriction of $V(\la)$ to $H$.
\end{definition}

\subsection{Dual Cauchy identity} Let us recall the dual Cauchy identity (see, e.g.,  ~\cite[(9)]{BG}) :
\begin{lem} \label{lem-dual-Cauchy} For any $m \ge 1$ and $g \ge 1$, we have
\begin{align*}
\prod_{i=1}^m\prod_{j=1}^g (1+x_it_j) = \sum_{\lambda \trianglelefteq  (g^m)} S^{\U(m)}_{\la}(\mathbf x)S^{\U(g)}_{\lap}(\mathbf t),
\end{align*}
where $\mathbf x=(x_1, \dots , x_m)$ and $\mathbf t=(t_1, \dots, t_g)$.
We also have
\begin{align*}
\prod_{i=1}^m\prod_{j=1}^g (1-x_i t_j) = \sum_{\lambda \trianglelefteq  (g^m)} (-1)^{|\la|} S^{\U(m)}_{\la}(\mathbf x)S^{\U(g)}_{\lap}(\mathbf t),
\end{align*}
by replacing $x_i$ to $-x_i$.
\end{lem}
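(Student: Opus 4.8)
The plan is to establish the first identity and then derive the second purely formally. The cleanest route is to recognize the left-hand side as the generating function whose Schur expansion is governed by the classical dual Cauchy (skew-duality) identity. First I would recall the standard Cauchy identity $\prod_{i,j}(1-x_it_j)^{-1} = \sum_\mu S_\mu(\mathbf x)S_\mu(\mathbf t)$ together with its dual form $\prod_{i,j}(1+x_it_j) = \sum_\mu S_\mu(\mathbf x)S_{\mu'}(\mathbf t)$, where the sum on the right runs over all partitions $\mu$ and $\mu'$ denotes the transpose; this is exactly the involution $\omega$ applied in the $\mathbf t$ variables, sending $S_\mu$ to $S_{\mu'}$. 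The key algebraic input is that $\prod_{i=1}^m\prod_{j=1}^g(1+x_it_j)$, viewed as a symmetric function, expands with coefficient $S_\mu(\mathbf x)$ on $S_{\mu'}(\mathbf t)$.

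Next I would impose the two truncations $x=(x_1,\dots,x_m)$ and $t=(t_1,\dots,t_g)$. Specializing to $m$ variables $\mathbf x$ forces $S_\mu(\mathbf x)=0$ unless $\ell(\mu)\le m$, i.e. $\mu$ has at most $m$ rows; specializing to $g$ variables $\mathbf t$ forces $S_{\mu'}(\mathbf t)=0$ unless $\ell(\mu')\le g$, i.e. $\mu$ has at most $g$ columns, equivalently $\mu_1\le g$. Together these two conditions say precisely that the Young diagram of $\mu$ fits inside the $m\times g$ rectangle, which in the paper's partial-order notation is exactly $\mu\trianglelefteq(g^m)$. Relabeling $\mu$ as $\lambda$ and $S_\mu(\mathbf x)=S^{\U(m)}_\lambda(\mathbf x)$, $S_{\mu'}(\mathbf t)=S^{\U(g)}_{\lambda'}(\mathbf t)$ (using that the Schur function in $m$, resp. $g$, variables is the irreducible $\U(m)$, resp. $\U(g)$, character) yields the stated first identity.

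For the second identity I would simply substitute $x_i\mapsto -x_i$ in the first. On the left, $\prod_{i,j}(1+(-x_i)t_j)=\prod_{i,j}(1-x_it_j)$. On the right, $S^{\U(m)}_\lambda(-x_1,\dots,-x_m)=(-1)^{|\lambda|}S^{\U(m)}_\lambda(\mathbf x)$, since a Schur function is homogeneous of degree $|\lambda|$. This immediately produces the sign $(-1)^{|\lambda|}$ in front of each term and gives the second formula; no new combinatorial content is needed.

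The only genuine point requiring care — and the step I would flag as the main obstacle — is the bookkeeping that the two variable-truncations combine to give exactly the rectangle condition $\lambda\trianglelefteq(g^m)$ rather than a one-sided bound. Concretely, one must verify that vanishing of $S_\lambda(\mathbf x)$ for $\ell(\lambda)>m$ and vanishing of $S_{\lambda'}(\mathbf t)$ for $\lambda_1>g$ are jointly equivalent to $\lambda$ fitting in the $m\times g$ box, and that no boundary terms are inadvertently dropped or double-counted. This is a standard but essential consistency check; once it is in place, the rest of the argument is formal.
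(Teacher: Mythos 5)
Your proposal is correct and follows the standard derivation; the paper itself offers no proof of this lemma, simply recalling it as the known dual Cauchy identity (citing Bump--Gamburd), and explicitly notes that the second identity follows by replacing $x_i$ with $-x_i$, exactly as you argue via homogeneity of $S_\lambda$. Your bookkeeping of the two truncations combining to the rectangle condition $\lambda\trianglelefteq(g^m)$ is the right consistency check and is sound.
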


Following Proposition 3.3 in \cite{LO}, we obtain a formula for the auto-correlation functions.

\begin{prop} \label{prop-main}
Let $H$ be a subgroup of $\U(g)$ and $d\gamma$ be the probability Haar measure on $H$. Then, for each $m\ge 1$, the auto-correlation function for the distribution of characteristic polynomials of $H$ is given by
\begin{align} \label{main-f}
\int_{H} \prod_{i=1}^m \det (I+ x_i \gamma)  \, d \gamma & = \sum_{\lambda \trianglelefteq (g^m)} \mathfrak m_{\lambda'}(H)\, S^{\U(m)}_\lambda(\mathbf x), \\
\int_{H} \prod_{i=1}^m \det (I- x_i \gamma)  \, d \gamma &= \sum_{\lambda \trianglelefteq (g^m)} (-1)^{|\la|} \mathfrak m_{\lambda'}(H)\, S^{\U(m)}_\lambda(\mathbf x). \label{main-fn}
\end{align}
\end{prop}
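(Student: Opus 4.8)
The plan is to combine the character-theoretic expression of the characteristic polynomial with the dual Cauchy identity (Lemma \ref{lem-dual-Cauchy}) and then integrate term by term against the Haar measure, reading off the multiplicities from Definition \ref{def-m-H}. First I would observe that for $\gamma \in H \le \U(g)$ with eigenvalues $t_1, \dots, t_g$, the factor $\det(I + x_i \gamma)$ equals $\prod_{j=1}^g (1 + x_i t_j)$, so the integrand is exactly the left-hand side of the dual Cauchy identity evaluated at $\mathbf t = (t_1, \dots, t_g)$, the eigenvalues of $\gamma$. Applying Lemma \ref{lem-dual-Cauchy} pointwise in $\gamma$ gives
\begin{equation*}
\prod_{i=1}^m \det(I + x_i \gamma) = \sum_{\lambda \trianglelefteq (g^m)} S^{\U(m)}_\lambda(\mathbf x)\, S^{\U(g)}_{\lambda'}(\mathbf t),
\end{equation*}
where $S^{\U(g)}_{\lambda'}(\mathbf t) = \chi^{\U(g)}_{\lambda'}(\gamma)$ is the value at $\gamma$ of the irreducible character of $\U(g)$ indexed by $\lambda'$.

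Next I would integrate both sides over $H$ against $d\gamma$. Since the $S^{\U(m)}_\lambda(\mathbf x)$ are constants with respect to $\gamma$ and the sum is finite (it ranges over the finitely many $\lambda \trianglelefteq (g^m)$), I can interchange the integral with the sum, obtaining
\begin{equation*}
\int_H \prod_{i=1}^m \det(I + x_i \gamma)\, d\gamma = \sum_{\lambda \trianglelefteq (g^m)} S^{\U(m)}_\lambda(\mathbf x) \int_H \chi^{\U(g)}_{\lambda'}(\gamma)\, d\gamma.
\end{equation*}
The key input is now the orthogonality/projection fact from representation theory: for a closed subgroup $H$ with probability Haar measure, the integral $\int_H \chi_{\lambda'}^{\U(g)}(\gamma)\, d\gamma$ computes the dimension of the $H$-invariants of $V(\lambda')$, i.e. the multiplicity of the trivial representation $1_H$ in the restriction $V(\lambda')\big|_H$. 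By Definition \ref{def-m-H} this multiplicity is exactly $\mathfrak m_{\lambda'}(H)$. Substituting yields \eqref{main-f}.

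Finally, for \eqref{main-fn} I would replace each $x_i$ by $-x_i$ and use the second form of the dual Cauchy identity in Lemma \ref{lem-dual-Cauchy}, which introduces the sign $(-1)^{|\lambda|}$; alternatively I can simply note $\det(I - x_i \gamma) = \prod_j(1 - x_i t_j)$ and repeat the argument with the signed identity, giving the claimed factor $(-1)^{|\lambda|}$ in front of $\mathfrak m_{\lambda'}(H)\, S^{\U(m)}_\lambda(\mathbf x)$. The only point requiring any care is the justification that $\int_H \chi_{\lambda'}^{\U(g)}\, d\gamma$ equals the invariant multiplicity; this is the standard averaging-projection argument for compact groups, and since $H$ is compact and the character is continuous, the integral is well defined and the interchange with the finite sum is immediate. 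Thus the main substance is recognizing the integrand as the dual Cauchy expansion evaluated at eigenvalues and interpreting the resulting integrals representation-theoretically, rather than any delicate estimate.
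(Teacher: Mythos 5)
Your proposal is correct and follows the same route as the paper's own proof: expand $\prod_i\det(I+x_i\gamma)$ via the dual Cauchy identity of Lemma~\ref{lem-dual-Cauchy}, interchange the finite sum with the integral, identify $\int_H S^{\U(g)}_{\lambda'}(\mathbf t)\,d\gamma$ with $\mathfrak m_{\lambda'}(H)$ by Schur orthogonality and Definition~\ref{def-m-H}, and obtain \eqref{main-fn} by the substitution $x_i\mapsto -x_i$. No gaps.
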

\begin{proof}
Let $t_1, \dots , t_g$ be the eigenvalues of $\gamma \in H$.
Since we have
\[ \det (I + x_i \gamma) = \prod_{j=1}^g (1+x_i t_j),\]
it follows from Lemma \ref{lem-dual-Cauchy} that
\begin{align*} 
& \int_{H}  \prod_{j=1}^m \det (I+ x_j \gamma)  d \gamma = \int_{H}  \prod_{i=1}^m \prod_{j=1}^g (1+x_i t_j) d \gamma  \allowdisplaybreaks \\
&= \int_{H}  \sum_{\lambda \trianglelefteq  (g^m)} S^{\U(m)}_{\la}(\mathbf x)S^{\U(g)}_{\lap}(\mathbf t) d\gamma =  \sum_{\lambda \trianglelefteq  (g^m)} S^{\U(m)}_{\la}(\mathbf x) \int_{H}S^{\U(g)}_{\lap}(\mathbf t) d\gamma .   \nonumber
\end{align*}
From Schur orthogonality (for example, \cite{Bu}), the integral $\int_{H}
S^{\U(g)}_{\lap}(\mathbf t)  d\gamma$
is equal to the multiplicity of the trivial representation $1_H$ of $H$ in the restriction of $S^{\U(g)}_{\lap}$ to $H$, which is  equal to $\mathfrak m_{\lambda'}(H)$ by Definition \ref{def-m-H}. This establishes the first identity.
The second identity follows from replacing $x_i$ with $-x_i$ in the first identity.
\end{proof}

\begin{cor}
Suppose that $-I \in H$. Then $\mathfrak m_\lap(H) =0$ whenever $|\lap|$ is odd.
\end{cor}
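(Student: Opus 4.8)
The plan is to use the hypothesis $-I \in H$ to extract a parity constraint on $\la'$ via the action of the central element on the representation $V(\lap)$. First I would recall that the coefficient $\mathfrak{m}_{\lap}(H)$ is, by Definition~\ref{def-m-H}, the multiplicity of the trivial representation $1_H$ in the restriction $V(\lap)\big|_H$, equivalently the integral $\int_H S^{\U(g)}_{\lap}(\mathbf t)\, d\gamma$ as established in the proof of Proposition~\ref{prop-main}. To exploit $-I \in H$, the key observation is that $-I$ is central in $\U(g)$, so by Schur's lemma it acts on the irreducible module $V(\lap)$ as a scalar.

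The main computation is to identify that scalar. Since $-I = (-1)\cdot I$ acts on the defining representation as multiplication by $-1$, it acts on the highest-weight vector of $V(\lap)$—and hence on all of $V(\lap)$—as $(-1)^{|\lap|}$, because $V(\lap)$ sits inside the $|\lap|$-fold tensor power of the standard representation (its character is $S^{\U(g)}_{\lap}$, a homogeneous symmetric polynomial of degree $|\lap|$). Concretely, one can see this at the level of characters: $S^{\U(g)}_{\lap}$ is homogeneous of degree $|\lap|$, so $S^{\U(g)}_{\lap}(-\mathbf t) = (-1)^{|\lap|} S^{\U(g)}_{\lap}(\mathbf t)$, and evaluating the character at $-I$ (whose eigenvalues are $-t_1,\dots,-t_g$ for any $\gamma$ with eigenvalues $t_j$) confirms that $-I$ acts by the scalar $(-1)^{|\lap|}$ on $V(\lap)$.

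Now suppose $|\lap|$ is odd. Then $-I$ acts on $V(\lap)$ as $-1$, whereas on the trivial $H$-module $1_H$ it must act as $+1$ (the trivial representation sends every group element, including $-I \in H$, to $1$). Therefore no nonzero $H$-equivariant copy of $1_H$ can embed in $V(\lap)\big|_H$: any such embedding would have to intertwine the scalar $+1$ with the scalar $-1$ on the image, forcing it to be zero. Hence the multiplicity $\mathfrak{m}_{\lap}(H)$ vanishes when $|\lap|$ is odd.

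I do not anticipate a genuine obstacle here; the result is a direct application of Schur's lemma together with the homogeneity of Schur functions. The only point requiring slight care is to state cleanly \emph{why} $-I$ acts by the scalar $(-1)^{|\lap|}$—one should either invoke that $V(\lap)$ is a subrepresentation of a tensor power of the standard representation, or argue directly via the homogeneity relation $S^{\U(g)}_{\lap}(-\mathbf t)=(-1)^{|\lap|}S^{\U(g)}_{\lap}(\mathbf t)$ evaluated at the central element. Either formulation makes the parity obstruction precise and completes the argument.
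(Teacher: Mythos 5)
Your proof is correct, but it takes a genuinely different route from the paper's. The paper proves the corollary at the level of the auto-correlation function: since $-I\in H$, translation by $-I$ preserves the Haar measure, so $\int_H\prod_j\det(I+x_j\gamma)\,d\gamma=\int_H\prod_j\det(I-x_j\gamma)\,d\gamma$; comparing the two expansions \eqref{main-f} and \eqref{main-fn} from Proposition~\ref{prop-main} and using the linear independence of the Schur functions $S^{\U(m)}_\la$ gives $\mathfrak m_{\lap}(H)=(-1)^{|\la|}\mathfrak m_{\lap}(H)$, hence vanishing for $|\lap|$ odd. You instead argue intrinsically on the module $V(\lap)$: the central element $-I$ acts by the scalar $(-1)^{|\lap|}$ (by Schur's lemma plus homogeneity of $S^{\U(g)}_{\lap}$), while it acts by $+1$ on the trivial $H$-module, so no copy of $1_H$ can occur when $|\lap|$ is odd. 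Both arguments hinge on the same homogeneity fact $S^{\U(g)}_{\lap}(-\mathbf t)=(-1)^{|\lap|}S^{\U(g)}_{\lap}(\mathbf t)$, but yours is more self-contained --- it needs only Definition~\ref{def-m-H} and not the generating-function identities of Proposition~\ref{prop-main} or the linear independence of Schur functions --- whereas the paper's version stays entirely within the auto-correlation framework it has just set up, which is why it is phrased as a one-line consequence of that proposition. Your argument is complete as written; no gaps.
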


\begin{proof}
If $-I \in H$, we have
\[  \int_{H}  \prod_{j=1}^m \det (I+ x_j \gamma)  d \gamma = \int_{H}  \prod_{j=1}^m \det (I- x_j \gamma)  d \gamma .\]
Our assertion follows from \eqref{main-f} and \eqref{main-fn} by comparing the right-hand sides, since the Schur functions are linearly independent.
\end{proof}

We recall the classical branching rule from $\U(g)$ to $\U(g-1)$ for $g \ge 2$.
\begin{prop}   \label{thm: branching A2 to A1}
Let $V_g(\lambda)$ be the irreducible representation of $\U(g)$ with highest weight $\lambda$. Then we have
$$   [V_g(\la):V_{g-1}(\mu)] \le 1$$ for any partition $\mu$ with at most $g-1$ parts.
Furthermore, $   [V_{g}(\la):V_{g-1}(\mu)] = 1$ precisely when
$$   \la_1 \ge  \mu_1 \ge \la_2 \ge \mu_2 \ge \cdots \ge \mu_{g-1} \ge \la_g.$$
\end{prop}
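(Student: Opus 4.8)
The plan is to reduce the statement to a specialization identity for Schur polynomials and then prove that identity combinatorially. Under the block-diagonal embedding $\U(g-1)\hookrightarrow\U(g)$ sending $A$ to $\operatorname{diag}(A,1)$, an element of $\U(g-1)$ with eigenvalues $x_1,\dots,x_{g-1}$ acts on $V_g(\lambda)$ as an element of $\U(g)$ with eigenvalues $x_1,\dots,x_{g-1},1$. Hence the character of the restriction $V_g(\lambda)\big|_{\U(g-1)}$ is the specialization $S^{\U(g)}_\lambda(x_1,\dots,x_{g-1},1)$. Since the Schur polynomials $\{S^{\U(g-1)}_\mu\}$ in $g-1$ variables form a basis of the symmetric polynomials, the branching multiplicity $[V_g(\lambda):V_{g-1}(\mu)]$ is exactly the coefficient $c_\mu$ in the expansion $S^{\U(g)}_\lambda(x_1,\dots,x_{g-1},1)=\sum_\mu c_\mu\,S^{\U(g-1)}_\mu(x_1,\dots,x_{g-1})$, so it suffices to show $c_\mu\in\{0,1\}$, with $c_\mu=1$ exactly under the stated interlacing condition.

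First I would invoke the combinatorial description $S^{\U(g)}_\lambda(\mathbf x)=\sum_T \mathbf x^{\operatorname{wt}(T)}$, the sum ranging over semistandard (column-strict) Young tableaux $T$ of shape $\lambda$ with entries in $\{1,\dots,g\}$, where $\mathbf x^{\operatorname{wt}(T)}=\prod_i x_i^{\#\{i\text{ in }T\}}$. The key structural observation is that the cells of $T$ containing the largest entry $g$ occupy at most one cell per column (by strict increase down columns) and sit at the bottom of their columns; deleting them yields a semistandard tableau $T'$ of some shape $\mu\subseteq\lambda$ with entries in $\{1,\dots,g-1\}$ for which the skew shape $\lambda/\mu$ is a horizontal strip.

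The next step is to note that $\lambda/\mu$ being a horizontal strip is precisely the interlacing condition $\lambda_1\ge\mu_1\ge\lambda_2\ge\cdots\ge\mu_{g-1}\ge\lambda_g$. Specializing $x_g\mapsto 1$ in the tableau sum contributes, for each admissible $\mu$, the factor $x_g^{|\lambda/\mu|}=1$ and then sums over all tableaux $T'$ of shape $\mu$ with entries $\le g-1$, yielding
$$S^{\U(g)}_\lambda(x_1,\dots,x_{g-1},1)=\sum_{\mu\,:\,\lambda/\mu\text{ horizontal strip}} S^{\U(g-1)}_\mu(x_1,\dots,x_{g-1}).$$
Thus every interlacing $\mu$ contributes with coefficient exactly $1$ and all other $\mu$ contribute $0$, which establishes both assertions simultaneously.

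The main obstacle — really the only nontrivial point — is the bijective bookkeeping in the decomposition by the cells labeled $g$: one must verify that deleting these cells is a well-defined bijection between semistandard tableaux of shape $\lambda$ over $\{1,\dots,g\}$ and pairs consisting of a shape $\mu$ with $\lambda/\mu$ a horizontal strip together with a semistandard tableau of shape $\mu$ over $\{1,\dots,g-1\}$, and that the horizontal-strip condition matches the stated interlacing inequalities exactly. Everything else follows directly from the character interpretation of Schur polynomials and their linear independence. An alternative route via the bialternant formula $S^{\U(g)}_\lambda=\det\!\big(x_i^{\lambda_j+g-j}\big)/\det\!\big(x_i^{g-j}\big)$ together with a Laplace expansion after setting $x_g=1$ is available, but the tableau argument delivers the $\le 1$ bound and the interlacing description at once and more transparently.
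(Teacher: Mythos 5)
Your proof is correct. Note that the paper itself offers no proof of this proposition: it is stated as a recollection of the classical $\U(g)\downarrow\U(g-1)$ branching rule, so there is no argument in the paper to compare yours against. What you supply is the standard tableau-theoretic derivation: identifying the restricted character with the specialization $S^{\U(g)}_\lambda(x_1,\dots,x_{g-1},1)$, stratifying semistandard tableaux by the horizontal strip of cells labelled $g$, and matching the horizontal-strip condition with the interlacing inequalities. All the steps check out, including the two points that need care: the inverse of the deletion map is well defined (placing $g$'s on any horizontal strip $\lambda/\mu$ below entries $\le g-1$ preserves column-strictness and row-weakness), and the resulting $\mu$ automatically has at most $g-1$ parts since a first column of length $g$ filled from $\{1,\dots,g-1\}$ is impossible. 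Together with the linear independence of the Schur polynomials $S^{\U(g-1)}_\mu$, this yields both the multiplicity-one bound and the exact interlacing criterion at once, so the argument is complete and self-contained.
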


\section{Identities for $g=2$} \label{sec: g=2}
In this section, we consider some disconnected subgroups $H$ of $\U(2)$ and compute $\mathfrak m_{\lap}(H)$ for $\la \trianglelefteq (2^m)$ in \eqref{main-f}. This computation produces identities involving Schur functions $S_{\lambda}^{\U(m)}$ for all $m \in \mathbb Z_{\ge 1}$.

\medskip

We identify  $\U(1)$ with the subgroup $\left \{ \begin{pmatrix} t & 0 \\ 0 & t^{-1}  \end{pmatrix} : t\in \C , |t|=1 \right \} \le \U(2)$.

\subsection{Subgroup $\langle \U(1), J\rangle$}
Let us consider the subgroup $H_2$ of $\U(2)$ generated by
$\U(1)$ and
$J \seteq \left( \begin{matrix} 0 & 1 \\ -1 & 0  \end{matrix}\right)$,
i.e. $$H_2 = \langle  \U(1),J \rangle \le \U(2).$$
Then we have \begin{equation} \label{h2j} H_2= \U(1) \sqcup J\U(1) .\end{equation}

\begin{theorem} \label{thm :g21}For any partition $(a,b) \trianglelefteq (m^2)$, we have
\[  \m_{(a,b)}(\U(1))= \delta ( a \equiv_2 b) \quad \text{ and } \quad \mathfrak m_{(a,b)}(H_2)=  \delta( a \equiv_4 b). \]
Furthermore, for any $m \in \Z_{\ge1}$, we have
\begin{align}  \label{eq: g2 general}
\prod_{i=1}^m (1+x_i^2)= \sum_{\substack{ \la \trianglelefteq (2^m) \\ \la'=(b+2j,b)}} (-1)^j \ S^{\U(m)}_{\lambda}(\mathbf x) = \sum_{b=0}^m \sum_{j=0}^{\lfloor \frac {m-b} 2 \rfloor} (-1)^j \ S^{\U(m)}_{(2^b, 1^{2j})}(\mathbf x) ,
\end{align} where we set $j:=(a-b)/2$.
\end{theorem}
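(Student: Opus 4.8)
The plan is to compute the two multiplicities $\mathfrak m_{(a,b)}(\U(1))$ and $\mathfrak m_{(a,b)}(H_2)$ by integrating the character $S^{\U(2)}_{(a,b)}$ against the Haar measure, using the coset decomposition \eqref{h2j}, and then to deduce the Schur function identity either from Proposition~\ref{prop-main} or, more directly, from the dual Cauchy identity of Lemma~\ref{lem-dual-Cauchy}. The whole computation rests on one elementary observation: a generic element of the coset $J\U(1)$ has the form $\begin{pmatrix} 0 & t^{-1} \\ -t & 0\end{pmatrix}$, which has trace $0$ and determinant $1$, hence eigenvalues $\pm\sqrt{-1}$ \emph{independent of} $t$. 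Consequently the class function $S^{\U(2)}_{(a,b)}$ is constant on $J\U(1)$, and there $\det(I+x_i\gamma)=(1+\sqrt{-1}\,x_i)(1-\sqrt{-1}\,x_i)=1+x_i^2$.

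First I would treat $\U(1)$. Writing the Schur polynomial in monomial form $S^{\U(2)}_{(a,b)}(t_1,t_2)=\sum_{k=b}^{a}t_1^{k}t_2^{\,a+b-k}$ and restricting to $\U(1)$ by setting $(t_1,t_2)=(t,t^{-1})$ gives $\sum_{k=b}^{a}t^{2k-a-b}$. Since $\mathfrak m_{(a,b)}(\U(1))$ is the multiplicity of the constant term $t^0$, it equals the number of indices $k\in[b,a]$ with $2k=a+b$; there is exactly one such $k$ when $a+b$ is even and none otherwise, i.e. $\mathfrak m_{(a,b)}(\U(1))=\delta(a\equiv_2 b)$.

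Next, since $[H_2:\U(1)]=2$, the probability Haar measure on $H_2$ splits as $\int_{H_2}=\tfrac12\int_{\U(1)}+\tfrac12\int_{J\U(1)}$, so that $\mathfrak m_{(a,b)}(H_2)=\tfrac12\bigl(\mathfrak m_{(a,b)}(\U(1))+S^{\U(2)}_{(a,b)}(\sqrt{-1},-\sqrt{-1})\bigr)$, using that the character is constant on $J\U(1)$. The remaining input is the value $S^{\U(2)}_{(a,b)}(\sqrt{-1},-\sqrt{-1})=\sum_{k=b}^{a}(\sqrt{-1})^{k}(-\sqrt{-1})^{a+b-k}=(\sqrt{-1})^{a+b}\sum_{k=b}^{a}(-1)^{a+b-k}$, where the alternating sum evaluates to $(-1)^b\,\delta(a\equiv_2 b)$; simplifying the power of $\sqrt{-1}$ then yields $S^{\U(2)}_{(a,b)}(\sqrt{-1},-\sqrt{-1})=(-1)^{(a-b)/2}\delta(a\equiv_2 b)$. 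Plugging in gives $\mathfrak m_{(a,b)}(H_2)=\tfrac12\delta(a\equiv_2 b)\bigl(1+(-1)^{(a-b)/2}\bigr)$, which is $1$ exactly when $(a-b)/2$ is even, i.e. $\delta(a\equiv_4 b)$.

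Finally, for the identity \eqref{eq: g2 general} I would evaluate the dual Cauchy identity (with $g=2$) at $\mathbf t=(\sqrt{-1},-\sqrt{-1})$: the left side becomes $\prod_{i=1}^m(1+\sqrt{-1}\,x_i)(1-\sqrt{-1}\,x_i)=\prod_{i=1}^m(1+x_i^2)$, while the right side becomes $\sum_{\lambda\trianglelefteq(2^m)}S^{\U(m)}_\lambda(\mathbf x)\,S^{\U(2)}_{\lambda'}(\sqrt{-1},-\sqrt{-1})$, and the coefficient just computed is $(-1)^{j}$ on partitions with $\lambda'=(b+2j,b)$ and $0$ otherwise. (Equivalently, one can feed the two multiplicity formulas into Proposition~\ref{prop-main} and use $\prod_i(1+x_i^2)=2\int_{H_2}-\int_{\U(1)}$, producing the coefficient $2\,\delta(j\text{ even})-1=(-1)^j$.) The passage between the two displayed forms is the transpose bookkeeping $\lambda=(2^b,1^{2j})\leftrightarrow\lambda'=(b+2j,b)$ together with the range $b+2j\le m$. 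None of the steps is genuinely hard; the only place demanding care is the parity accounting in the alternating sum and the reduction of $(\sqrt{-1})^{a+b}(-1)^b$ to $(-1)^{(a-b)/2}$, which is exactly where the passage from the mod-$2$ condition for $\U(1)$ to the mod-$4$ condition for $H_2$ originates.
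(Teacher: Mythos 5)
Your proposal is correct, but it takes a somewhat different route from the paper's. Where you compute both multiplicities purely from character values --- counting the constant term of $S^{\U(2)}_{(a,b)}(t,t^{-1})$ for $\U(1)$, and evaluating $S^{\U(2)}_{(a,b)}(\sqrt{-1},-\sqrt{-1})$ on the coset $J\U(1)$ after observing that every element there has eigenvalues $\pm\sqrt{-1}$ --- the paper instead works inside the module $V(\lambda')\cong\mathbf{det}^b\otimes\mathrm{Sym}^{a-b}(V)$, identifies $v_1^jv_2^j$ as the unique $\U(1)$-invariant vector, and reads off $\m_{(a,b)}(H_2)$ from the sign $(-1)^j$ with which $J$ acts on it; your parity bookkeeping $(\sqrt{-1})^{a+b}(-1)^b=(-1)^{(a-b)/2}$ is exactly the character-level shadow of that sign. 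For the identity itself you specialize the dual Cauchy identity at $\mathbf t=(\sqrt{-1},-\sqrt{-1})$, which is arguably the cleanest derivation and makes the coefficient $(-1)^j$ appear in one stroke, whereas the paper assembles it from the coset decomposition of the integral via $\prod_i(1+x_i^2)=2\int_{H_2}-\int_{\U(1)}$ (your parenthetical alternative is precisely their argument). What your version buys is brevity and self-containment for $g=2$; what the paper's invariant-vector method buys is that it scales to the $g=3$ sections, where the analogous character evaluations are no longer tractable and one genuinely needs the explicit $\U(2)$-highest weight vectors $v_{(p,q;a,b)}$. All of your computations check out.
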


\begin{proof}
 For any $\gamma = \begin{pmatrix} t &0\\0&t^{-1} \end{pmatrix} \in\U(1)$, we have
 $\det(I+xJ\gamma) = 1+x^2$.
Let $du=du(\gamma)$ be the probability Haar measure on $\U(1) \le \U(2)$. By Proposition \ref{prop-main} and \eqref{h2j}, we have
\begin{align}
\sum_{\la \trianglelefteq (2^m)} \m_{\lap}(H_2) S^{\U(m)}_{\la}(\mathbf x)&=  \int_{H_2}  \prod_{i=1}^m \det(I+x_i\gamma) d\gamma \nonumber \\ & = \dfrac{1}{2} \int_{\U(1)} \prod_{i=1}^m \det(I+x_i\gamma) du
+  \dfrac{1}{2}  \int_{\U(1)} \prod_{i=1}^m \det(I+x_iJ\gamma)du \nonumber \\
& = \dfrac{1}{2}  \sum_{\la \trianglelefteq (2^m)} \m_{\lap}(\U(1)) S^{\U(m)}_{\la}(\mathbf x)
+  \dfrac{1}{2}  \int_{\U(1)} \prod_{i=1}^m (1+x_i^2) du \nonumber \\
&  = \dfrac{1}{2}   \sum_{\la \trianglelefteq (2^m)} \m_{\lap}(\U(1)) S^{\U(m)}_{\la}(\mathbf x)
+  \dfrac{1}{2} \prod_{i=1}^m (1+x_i^2)  .  \label{eqn-pro}
\end{align}

Let $v_1=(1,0)$ and $v_2=(0,1)$ be the standard unit vectors of $V:=\mathbb C^2$, and consider the standard representation of $\U(2)$ on $V$, and let $\mathbf{det}$ be the one-dimensional representation of $\U(2)$ defined by the determinant. For $\lap=(a,b) \trianglelefteq (m^2)$, we have
$V(\lambda') \cong \mathbf{det}^b \otimes \mathrm{Sym}^{a-b}(V)$.
Thus the trivial $\U(1)$-module is generated by $v_1^j v_2^j$ only when $a-b$ is even, where we set $j:=(a-b)/2$. In other word, we have
\[ \m_{(a,b)}(\U(1))= \delta ( a \equiv_2 b) .\]
Furthermore, since $J$ sends
$v_1\mapsto -v_2$ and $v_2 \mapsto v_1$, we see that $v_1^j v_2^j$ is fixed by $J$ when $j$ is even. Therefore,
\[\m_{(a,b)}(H_2)=  \delta( j \equiv_2 0)= \delta ( a \equiv_4 b) . \]

Note that, when $\lap=(b+2j,b)$, we have $\la=(2^b, 1^{2j})$.
Now it follows from \eqref{eqn-pro} that
\[
\prod_{i=1}^m (1+x_i^2) =  \sum_{b=0}^m \sum_{j=0}^{\lfloor \frac {m-b} 2 \rfloor}\left ( 2 \delta(j \equiv_2 0) -1 \right ) \ S^{\U(m)}_{(2^b, 1^{2j})}(\mathbf x)=  \sum_{b=0}^m \sum_{j=0}^{\lfloor \frac {m-b} 2 \rfloor} (-1)^j \ S^{\U(m)}_{(2^b, 1^{2j})}(\mathbf x). \qedhere
\]
\end{proof}

\begin{remark} \label{rmk: g2}
The left hand side of~\eqref{eq: g2 general} can be written as a simple combination of the monomial symmetric functions. Namely, we have
$$ \prod_{i=1}^m (1+x_i^2)= \sum_{k=0}^m \mathsf{m}^{(m)}_{(2^k)} (\mathbf x) ,$$
where $\mathsf{m}^{(m)}_{\la}$ denote the monomial symmetric functions in $m$-variables associated with partitions $\la$ with $\ell(\la) \le m$.
\end{remark}

\begin{example} \label{ex: g2 huge}
Let us see an example for the case $m = 10$ in Theorem~\ref{thm :g21}. We have
 \begin{align}\label{eq: sl2 ex10}
\prod_{i=1}^{10} (1+x_i^2) =  \sum_{b=0}^{10} \sum_{j=0}^{\lfloor \frac {10-b} 2 \rfloor} (-1)^j \ S^{\U(10)}_{(2^b, 1^{2j})}(\mathbf x).
\end{align}
Note that $S^{\U(10)}_{(2^4, 1^2)}(\mathbf x)$ appears in the right
hand side of~\eqref{eq: sl2 ex10} with the coefficient $-1$ since $j
= 1$. As a polynomial itself, $S^{\U(10)}_{(2^4, 1^2)}(\mathbf x)$
contains $8701$ monomial terms and $S^{\U(10)}_{(2^4, 1^2)}(\mathbf
1) = 29700$, where $\mathbf 1 =(1,1, \dots , 1)$. Actually, there
are $15$ Schur functions with negative coefficient $-1$ in the right
hand side of~\eqref{eq: sl2 ex10} including $S^{\U(10)}_{(2^4,
1^2)}(\mathbf x)$. After amazing cancellations among Schur functions, we obtain a symmetric function in the
left hand side of~\eqref{eq: sl2 ex10}, which contains only $1024$
monomial terms with coefficients all $1$ in its expansion.
\end{example}

\subsection{Subgroup $\langle \U(1), J, \pmb \zeta_4 \rangle$}
Set
$$
\pmb \zeta_4 := \left( \begin{matrix}
\sqrt{-1} & 0  \\
0 & \sqrt{-1}  \\
\end{matrix}\right) \in \U(2),
$$
and denote by $H_{2,4}$ the subgroup of $\U(2)$ generated by $\U(1)$, $J$ and $\pmb \zeta_4$.
That is, we define
\[  H_{2,4}:= \langle \U(1), J , \pmb \zeta_4 \rangle \le \U(2). \]
Then we have \[H_{2,4} = \U(1) \sqcup J\U(1) \sqcup \pmb \zeta_4 \U(1) \sqcup \pmb \zeta_4 J \U(1). \]
Let $H'_{2,4}$ be the subgroup of $H_{2,4}$ generated by $U(1)$ and $\pmb \zeta_4$.

\begin{theorem} \label{thm:g24}
For any partition $(a,b) \trianglelefteq (m^2)$, we have
\[  \m_{(a,b)}(H'_{2,4})= \delta ( a+b \equiv_4 0) \quad \text{ and } \quad \mathfrak m_{(a,b)}(H_{2,4})= \delta ( a+b \equiv_4 0)\delta ( a-b \equiv_4 0). \]
Moreover, for any $m \in \Z_{\ge1}$, we have
\begin{equation}\label{eq: g24 general}
\begin{aligned}
\dfrac{1}{2} \left( \prod_{i=1}^m (1+x_i^2)+ \prod_{i=1}^m (1-x_i^2) \right) & =
\sum_{\substack{(a,b) \trianglelefteq (m^2) \\ a+b \equiv_4 0} }(-1)^{ \delta(a \not\equiv_4 b)} S^{\U(m)}_{(2^{b}, 1^{a-b})}(\mathbf x) \\ &= \sum_{\substack{  (b+2j,b) \trianglelefteq (m^2)    \\   b+j \equiv_2 0 }} (-1)^{j} S^{\U(m)}_{(2^{b}, 1^{2j})}(\mathbf x) ,
\end{aligned}
\end{equation}
where we set $j := (a-b)/2$ as before.
\end{theorem}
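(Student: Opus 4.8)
The plan is to follow the template of Theorem~\ref{thm :g21}: first determine the two multiplicities $\mathfrak m_{(a,b)}(H'_{2,4})$ and $\mathfrak m_{(a,b)}(H_{2,4})$ by a direct analysis of the $H$-action on $V(\lambda')$, and then deduce the symmetric-function identity by combining the coset decompositions of $H_{2,4}$ and $H'_{2,4}$ with Proposition~\ref{prop-main}.

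For the multiplicities I would reuse the decomposition $V(\lambda') \cong \mathbf{det}^b \otimes \mathrm{Sym}^{a-b}(V)$ for $\lambda'=(a,b)$ established in Theorem~\ref{thm :g21}, where the one-dimensional space of $\U(1)$-fixed vectors (present exactly when $a\equiv_2 b$) is spanned by $v_1^j v_2^j$ with $j=(a-b)/2$. The new input is that $\pmb\zeta_4=\sqrt{-1}\,I$ acts on $V(\lambda')$ as a \emph{scalar}: on $\mathrm{Sym}^{a-b}(V)$ it is $(\sqrt{-1})^{a-b}$ and on $\mathbf{det}^b$ it is $\det(\sqrt{-1}\,I)^{b}=(-1)^b=(\sqrt{-1})^{2b}$, so the total scalar is $(\sqrt{-1})^{a+b}$. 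Hence the $\U(1)$-fixed vector is also $\pmb\zeta_4$-fixed iff $(\sqrt{-1})^{a+b}=1$, i.e. $a+b\equiv_4 0$; since this already forces $a\equiv_2 b$, it gives $\mathfrak m_{(a,b)}(H'_{2,4})=\delta(a+b\equiv_4 0)$. For $H_{2,4}$ I would add the requirement of $J$-invariance, which by the computation $J\colon v_1^j v_2^j \mapsto (-1)^j v_1^j v_2^j$ from Theorem~\ref{thm :g21} holds iff $j$ is even, i.e. $a-b\equiv_4 0$, yielding $\mathfrak m_{(a,b)}(H_{2,4})=\delta(a+b\equiv_4 0)\,\delta(a-b\equiv_4 0)$.

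For the identity I would use the coset decompositions $H_{2,4}=\U(1)\sqcup J\U(1)\sqcup \pmb\zeta_4\U(1)\sqcup \pmb\zeta_4 J\U(1)$ and $H'_{2,4}=\U(1)\sqcup \pmb\zeta_4\U(1)$, writing each integral as the average over the relevant $\U(1)$-cosets against the Haar measure $du$. Subtracting cancels the two common cosets and isolates the two ``new'' ones:
\begin{align*}
&\int_{H_{2,4}}\prod_i\det(I+x_i\gamma)\,d\gamma-\tfrac12\int_{H'_{2,4}}\prod_i\det(I+x_i\gamma)\,d\gamma \\
&\qquad = \tfrac14\left(\int_{\U(1)}\prod_i\det(I+x_iJ\gamma)\,du+\int_{\U(1)}\prod_i\det(I+x_i\pmb\zeta_4 J\gamma)\,du\right).
\end{align*}
The crucial point, exactly the underlined-coset phenomenon highlighted in the introduction, is that both integrands are \emph{independent} of $\gamma\in\U(1)$: a direct $2\times2$ determinant evaluation gives $\det(I+xJ\gamma)=1+x^2$ (as in Theorem~\ref{thm :g21}) and $\det(I+x\pmb\zeta_4 J\gamma)=1-x^2$, the latter because the off-diagonal entries $\sqrt{-1}\,xt^{-1}$ and $-\sqrt{-1}\,xt$ multiply to $x^2$ (the $t$'s cancelling). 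Thus the right-hand side equals $\tfrac14\big(\prod_i(1+x_i^2)+\prod_i(1-x_i^2)\big)$, whence $\tfrac12\big(\prod_i(1+x_i^2)+\prod_i(1-x_i^2)\big)=2\int_{H_{2,4}}-\int_{H'_{2,4}}$.

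Finally I would insert the Schur expansions of Proposition~\ref{prop-main}, so that the coefficient of $S^{\U(m)}_{(2^b,1^{a-b})}$ becomes $2\mathfrak m_{(a,b)}(H_{2,4})-\mathfrak m_{(a,b)}(H'_{2,4})=\delta(a+b\equiv_4 0)\big(2\delta(a-b\equiv_4 0)-1\big)$, and note that under the constraint $a+b\equiv_4 0$ the bracket equals $(-1)^{\delta(a\not\equiv_4 b)}$; the substitution $j=(a-b)/2$ then turns $a+b\equiv_4 0$ into $b+j\equiv_2 0$ and the sign into $(-1)^j$, reproducing both displayed forms of the identity. The only genuinely delicate step is this congruence bookkeeping—confirming that $a+b\equiv_4 0$ forces $a\equiv_2 b$ (so that the fixed vector exists and the multiplicity is exactly one) and verifying the sign identity $2\delta(a-b\equiv_4 0)-1=(-1)^{\delta(a\not\equiv_4 b)}=(-1)^j$; the remaining determinant evaluations and the dual Cauchy machinery are entirely routine.
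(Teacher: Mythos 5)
Your proposal is correct and follows essentially the same route as the paper: the same identification $V(\lambda')\cong\mathbf{det}^b\otimes\mathrm{Sym}^{a-b}(V)$ with fixed vector $v_1^jv_2^j$ (your global scalar $(\sqrt{-1})^{a+b}$ is exactly the paper's $(-1)^{b+j}$), the same coset decomposition isolating $J\U(1)$ and $\pmb\zeta_4 J\U(1)$ with the determinant evaluations $1+x^2$ and $1-x^2$, and the same coefficient combination $2\mathfrak m_{(a,b)}(H_{2,4})-\mathfrak m_{(a,b)}(H'_{2,4})$. No gaps.
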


\begin{proof}

Let $\lambda'=(a,b) \trianglelefteq (m^2)$. We keep the notations in the proof of Theorem \ref{thm :g21} for $V(\lambda') \cong \mathbf{det}^b \otimes \mathrm{Sym}^{a-b}(V)$. A vector in $V(\lambda')$ is fixed by $\U(1)$ if it is of the form $v_1^j v_2^j$ up to scalar multiplication. Since $\det (\pmb \zeta_4)=-1$, we get \[ \pmb \zeta_4 \, v_1^j v_2^j = (-1)^{b+j} v_1^j v_2^j .\] We see that $b+j \equiv_2 0 \Leftrightarrow a+b \equiv_4 0$, and if $a+b \equiv_4 0$ then $a-b \equiv_2 0$. Thus we obtain
\[   \m_{(a,b)}(H'_{2,4})= \delta ( a+b \equiv_4 0). \]
As observed in the proof of Theorem \ref{thm :g21}, the vector $v_1^j v_2^j$ is fixed by $J$ if and only if $a-b \equiv_4 0$. Therefore we have \[  \mathfrak m_{(a,b)}(H_{2,4})= \delta ( a+b \equiv_4 0)\delta ( a-b \equiv_4 0).\]

Let $d\gamma'= d\gamma'(\gamma)$ be the probability Haar measure on $H'_{2,4}$. Since $$ \det(I+xJ\gamma) = 1+x^2 \quad\text{ and } \quad    \det(I+x \, \pmb \zeta_4J\gamma) = 1-x^2$$ for all $\gamma  \in \U(1)$, we have
\begin{align*}
\int_{H_{2,4}} \Delta (\gamma) d\gamma &= \dfrac{1}{2} \int_{H'_{2,4}} \Delta (\gamma)   d\gamma' + \dfrac{1}{4}\int_{J\U(1)} \Delta (\gamma) du + \dfrac{1}{4}\int_{\pmb \zeta_4 J \U(1)} \Delta (\gamma) du\\
&= \dfrac{1}{2} \int_{H'_{2,4}} \Delta (\gamma) d\gamma'  + \dfrac{1}{4}\prod_{i=1}^m  (1+x_i^2)+\dfrac{1}{4} \prod_{i=1}^m  (1-x_i^2),
\end{align*}
where we write $\Delta(\gamma)=\prod_{i=1}^m \det(I+x_i \gamma)$ for convenience.
Applying Proposition \ref{prop-main} to the integrals, we obtain
\begin{align*} &\dfrac{1}{2} \left( \prod_{i=1}^m (1+x_i^2)+ \prod_{i=1}^m (1-x_i^2) \right)=
\sum_{(a,b) \trianglelefteq (m^2)}\left ( 2  \m_{(a,b)}(H_{2,4})-  \m_{(a,b)}(H'_{2,4}) \right )S^{\U(m)}_{(2^{b}, 1^{a-b})}(\mathbf x)\\&=
\sum_{\substack{(a,b) \trianglelefteq (m^2) \\ a+b \equiv_4 0} }(-1)^{ \delta(a \not\equiv_4 b)} S^{\U(m)}_{(2^{b}, 1^{a-b})}(\mathbf x)= \sum_{\substack{  (b+2j,b) \trianglelefteq (m^2)    \\   b+j \equiv_2 0 }} (-1)^{j} S^{\U(m)}_{(2^{b}, 1^{2j})}(\mathbf x) . \qedhere
\end{align*}
\end{proof}

\begin{remark} \label{rmk: g24} (1) The identity \eqref{eq: g24 general} can be derived from \eqref{eq: g2 general}. We will consider the alternate proof in Section \ref{alt}.

\noindent
(2) As in Remark~\eqref{rmk: g2}, we observe that the left hand side of~\eqref{eq: g24 general} is a simple combination of the monomial symmetric functions in $m$-variables:
$$ \dfrac{1}{2} \left( \prod_{i=1}^m (1+x_i^2)+ \prod_{i=1}^m (1-x_i^2) \right)=
\sum_{k=0}^{ \lfloor \frac{m}{2} \rfloor } \mathsf{m}^{(m)}_{(2^{2k})}(\mathbf x).$$
\end{remark}

\subsection{Pieri's rule} \label{alt}
One can see that
\begin{equation} \label{alP} \prod_{i=1}^m (1-x_i^2)   = \sum_{\substack{ \la \trianglelefteq (2^m) \\ \la'=(b+2j,b)}} (-1)^b \ S^{\U(m)}_{(2^b, 1^{2j})}(\mathbf x)   = \sum_{k=0}^m (-1)^k\mathsf{m}^{(m)}_{(2^k)}(\mathbf x).\end{equation}
Indeed, by replacing $x_i$ with $\sqrt{-1}\, x_i$ in \eqref{eq: g2 general}, we obtain
\[\prod_{i=1}^m (1-x_i^2)   = \sum_{\substack{ \la \trianglelefteq (2^m) \\ \la'=(b+2j,b)}} (-1)^j  (-1)^{j+b} \ S^{\U(m)}_{(2^b, 1^{2j})}(\mathbf x)= \sum_{\substack{ \la \trianglelefteq (2^m) \\ \la'=(b+2j,b)}} (-1)^b \ S^{\U(m)}_{(2^b, 1^{2j})}(\mathbf x) ,\]
and the second equality in \eqref{alP} follows from the definition of the monomial symmetric function $\mathsf m^{(m)}_{(2^k)}$. Combining \eqref{eq: g2 general} with \eqref{alP} yields the identity
\eqref{eq: g24 general}.
If we combine \eqref{eq: g2 general} and \eqref{alP} in a different way, we obtain
\begin{align*}
\frac{1}{2} \left(\prod_{i=1}^m (1+x_i^2) - \prod_{i=1}^m (1-x_i^2) \right) = \sum_{\substack{ (a , b)  \trianglelefteq ( m^2) \\  a+b \equiv_4 2 }} (-1)^{\delta(a \not\equiv_4 b)} S^{\U(m)}_{(2^{b}, 1^{a-b})}(\mathbf x) = \sum_{k=0}^{ \lfloor \frac{m-1}{2} \rfloor } \mathsf{m}^{(m)}_{(2^{2k+1})}(\mathbf x).
\end{align*}

Moreover, \eqref{alP} can also be proven using Pieri's rule. Since the idea can be used in other cases, let us see the proof.
We first consider the trivial case $g=1$ to use the results for the case $g=2$. Let $\mathsf m_\lambda^{(m)}$ be the monomial symmetric function of $m$-variables associated to a partition $\lambda$ with $\ell (\lambda) \le m$, as before. Then, in particular, we have
$$    \mathsf{m}_{(1^k)}^{(m)}= S_{(1^k)}^{\U(m)}\quad \text{ for } k \le m $$
and obtain
\begin{align} \label{eq: g=1 +}
\prod_{i=1}^m (1+x_i) = \sum_{k=0}^m   \mathsf{m}^{(m)}_{(1^k)} (\mathbf x) = \sum_{\la \trianglelefteq (1^m)} S_{\la}^{\U(m)}(\mathbf x).
\end{align}
By replacing $x_i$ with $-x_i$, we have
\begin{align} \label{eq: g=1 -}
\prod_{i=1}^m (1-x_i) = \sum_{k=0}^m  (-1)^k \mathsf{m}^{(m)}_{(1^k)} (\mathbf x) = \sum_{\la \trianglelefteq (1^m)} (-1)^{|\la|} S_{\la}^{\U(m)}(\mathbf x).
\end{align}

Recall Pieri's rule from, e.g., Macdonald's book \cite[(5.17)]{Mac98}:
\begin{align}    \label{eq: Pieri}
 S^{\U(m)}_{\la}(\mathbf x) \times \prod_{i=1}^m (1+x_i) &=   S^{\U(m)}_{\la}(\mathbf x) \times \left(   \sum_{l=0}^m \mathsf{e}_l (\mathbf x)  \right) =
\sum_{ \la \trianglelefteq \mu \trianglelefteq \la+(1^m)} S^{\U(m)}_{\mu}(\mathbf x),
\end{align}
where $\mathsf{e}_l(\mathbf x) $ denotes the elementary symmetric function of partition $(l)$ of length $1$ and $\lambda + (1^m) = (\lambda_1+1, \lambda_2 +2 , \dots , \lambda_m+1)$ for $\lambda = (\lambda_1, \lambda_2, \dots , \lambda_m)$.

By~\eqref{eq: Pieri} and~\eqref{eq: g=1 -}, we have
\begin{equation}\label{eq: process 1-x_i2}
\begin{aligned}
\prod_{i=1}^m (1-x_i^2)  & = \prod_{i=1}^m (1+x_i) \times \sum_{ (1^k) \trianglelefteq (1^m)} (-1)^{k} S_{(1^k)}^{\U(m)}(\mathbf x) \\
& =\sum_{k=0}^m  \sum_{ (1^k) \trianglelefteq \mu \trianglelefteq (2^k,1^{m-k})  } (-1^{k})  S^{\U(m)}_{\mu}(\mathbf x)
\end{aligned}
\end{equation}
Thus for any partition $(2^b,1^{a-b}) \trianglelefteq (2^m)$, the coefficient of $S^{\U(m-1)}_{(2^b,1^{a-b})}(\mathbf x)$ in~\eqref{eq: process 1-x_i2} is given by
$$
\sum_{s=b}^{a} (-1)^s  = \begin{cases}
0 &\text{ if } a-b \equiv_2 1, \\
1 &\text{ if } a-b \equiv_2 0 \text{ and } b \equiv_2 0, \\
-1 &\text{ if } a-b \equiv_2 0 \text{ and } b \equiv_2 1. \\
\end{cases}
$$
When $a-b \equiv_2 0$, write $a-b=2j$. Then we have
\begin{align*}
\prod_{i=1}^m (1-x_i^2)  & = \sum_{\substack{ \la \trianglelefteq (2^m) \\ \la'=(b+2j,b)}} (-1)^b \ S^{\U(m)}_{(2^b, 1^{2j})}(\mathbf x)
\end{align*} as desired.

Using a similar argument, we obtain two more identities.

\begin{prop} \label{prop: byproduct}
For any $m \in \Z_{\ge1}$, we have
\begin{align}\label{eq: (1+x)2}
\prod_{i=1}^m (1+2x_i+x_i^2) &=  \sum_{ (a , b)  \trianglelefteq ( m^2)} (a-b+1)  S^{\U(m)}_{(2^{b}, 1^{a-b})}(\mathbf x) , \\
\label{eq: (1-x)2}
\prod_{i=1}^m (1-2x_i+x_i^2) &=  \sum_{  (a , b)  \trianglelefteq ( m^2)}      (-1)^{a-b} (a-b+1)  S^{\U(m)}_{(2^{b}, 1^{a-b})}(\mathbf x).
\end{align}
\end{prop}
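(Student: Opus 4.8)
The plan is to prove the two identities together by first establishing \eqref{eq: (1+x)2} and then deducing \eqref{eq: (1-x)2} from it. Observe that $1\pm 2x_i+x_i^2=(1\pm x_i)^2$, so the two claims read $\prod_{i=1}^m(1+x_i)^2=\sum_{(a,b)\trianglelefteq(m^2)}(a-b+1)\,S^{\U(m)}_{(2^b,1^{a-b})}(\mathbf x)$ and likewise with $1-x_i$. Since $S^{\U(m)}_{(2^b,1^{a-b})}$ is homogeneous of degree $2b+(a-b)=a+b$ and $(-1)^{a+b}=(-1)^{a-b}$, substituting $x_i\mapsto-x_i$ in the first identity turns the left-hand side into $\prod_{i=1}^m(1-x_i)^2$ and multiplies the coefficient of each $S^{\U(m)}_{(2^b,1^{a-b})}$ by $(-1)^{a-b}$, which is exactly \eqref{eq: (1-x)2}. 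Thus it suffices to prove \eqref{eq: (1+x)2}, and the strategy mirrors the Pieri-rule proof of \eqref{alP}.

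For \eqref{eq: (1+x)2}, I would expand one of the two factors $\prod_{i=1}^m(1+x_i)$ by \eqref{eq: g=1 +} as $\sum_{k=0}^m S^{\U(m)}_{(1^k)}(\mathbf x)$, so that
\begin{align*}
\prod_{i=1}^m(1+x_i)^2=\sum_{k=0}^m S^{\U(m)}_{(1^k)}(\mathbf x)\times\prod_{i=1}^m(1+x_i).
\end{align*}
Applying Pieri's rule \eqref{eq: Pieri} to each summand, and noting $(1^k)+(1^m)=(2^k,1^{m-k})$, gives
\begin{align*}
\prod_{i=1}^m(1+x_i)^2=\sum_{k=0}^m\ \sum_{(1^k)\,\trianglelefteq\,\mu\,\trianglelefteq\,(2^k,1^{m-k})}S^{\U(m)}_{\mu}(\mathbf x).
\end{align*}
Every $\mu$ occurring here has all parts $\le 2$, hence is of the form $(2^b,1^{a-b})$ for a unique $(a,b)\trianglelefteq(m^2)$.

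The remaining step is to count, for a fixed target $\mu=(2^b,1^{a-b})$, the indices $k$ for which $\mu$ lies in the Pieri range $(1^k)\trianglelefteq\mu\trianglelefteq(2^k,1^{m-k})$. Reading the order $\trianglelefteq$ componentwise: the inequality $\mu\trianglelefteq(2^k,1^{m-k})$ forces each of the $b$ parts equal to $2$ to sit in the first $k$ rows, i.e.\ $b\le k$, while $(1^k)\trianglelefteq\mu$ forces $\ell(\mu)=a\ge k$. Conversely, any $k$ with $b\le k\le a$ works (and $a\le m$ is guaranteed by $(a,b)\trianglelefteq(m^2)$). Hence the coefficient of $S^{\U(m)}_{(2^b,1^{a-b})}$ equals $\#\{k:b\le k\le a\}=a-b+1$, which is \eqref{eq: (1+x)2}.

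The computation is elementary once the setup is fixed; the one point requiring care — and the only place a miscount could enter — is the translation of the two chain conditions $(1^k)\trianglelefteq\mu$ and $\mu\trianglelefteq(2^k,1^{m-k})$ into the sharp bounds $b\le k\le a$, since here $\trianglelefteq$ is the componentwise containment order rather than dominance, so the constraints must be checked row by row rather than on partial sums. A small $m$ sanity check (e.g.\ $m=1$, where the right-hand side reads $1+2x_1+x_1^2$) confirms the bookkeeping.
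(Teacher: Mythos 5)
Your argument is correct and is essentially the paper's own proof: expand one factor of $\prod_i(1+x_i)$ via \eqref{eq: g=1 +}, apply Pieri's rule \eqref{eq: Pieri}, and count the indices $k$ with $b\le k\le a$ to get the coefficient $a-b+1$, then deduce \eqref{eq: (1-x)2} by the substitution $x_i\mapsto -x_i$ using homogeneity. No gaps.
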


\begin{proof}
By~\eqref{eq: Pieri} and~\eqref{eq: g=1 +}, we have
\begin{equation}\label{eq: process (1+x_i)2}
\begin{aligned}
\prod_{i=1}^m (1+x_i)^2  & = \prod_{i=1}^m (1+x_i) \times \sum_{ (1^k) \trianglelefteq (1^m)}  S_{(1^k)}^{\U(m)}(\mathbf x) \\
& =\sum_{k=0}^m \sum_{ (1^k) \trianglelefteq \mu \trianglelefteq (2^k,1^{m-k})  }   S^{\U(m)}_{\mu}(\mathbf x)
\end{aligned}
\end{equation}
Thus for any partition $(2^b,1^{a-b}) \trianglelefteq (2^m)$, the coefficient of $S^{\U(m)}_{(2^b,1^{a-b})}(\mathbf x)$ in~\eqref{eq: process (1+x_i)2} is equal to
$$
\sum_{s=b}^{a} 1 =  a-b+1,
$$
and the identity \eqref{eq: (1+x)2} follows.
By replacing $x_i$ with $-x_i$ in~\eqref{eq: (1+x)2}, we obtain \eqref{eq: (1-x)2}.
\end{proof}

\begin{remark} \label{rmk: may not applicable}
The above use of Pieri's rule may not be applicable, in general, if one can try to obtain \emph{closed-form formulas} for $g \ge 3$.
For instance, based on Theorem~\ref{thm :g21} about $g=2$, one can check the formula in Theorem~\ref{thm :g3} about $g=3$ below using Pieri's rule, for \emph{first several small values of $m$}.
But, when $g\ge 3$, obtaining closed-form formula for the coefficient of $S_{\la}^{\U(m)}$, $m \ge 1$,  seems not easy in this approach. 
\end{remark}

\section{Identities for $g=3$} \label{sec: g=3}
In this section, we consider some disconnected subgroups $H$ of $\U(3)$ and compute $\mathfrak m_{\lap}(H)$ for $\la \trianglelefteq (3^m)$ in \eqref{main-f}. As with the case $g=2$, our computation yields identities involving Schur functions $S_{\lambda}^{\U(m)}$ for all $m \in \mathbb Z_{\ge 1}$.

\medskip

To begin with, we embed $\U(1)$ into $\U(3)$ via
$$U(1) \simeq \left \{  \begin{pmatrix} t & 0 & 0 \\ 0 & t^{-1} & 0  \\ 0 &0 &1 \end{pmatrix} : t \in \C, |t|=1 \right \},$$
and $\U(2)$ into $\U(3)$ via $ A \mapsto \begin{pmatrix} A& 0 \\ 0 & 1 \end{pmatrix}$ for $A \in \U(2)$.

\subsection{Subgroup $\langle \U(1), J \rangle$}
Define
$$
J: = \left(
\begin{matrix}
0 & 1 & 0 \\ -1 & 0 & 0 \\  0& 0 & 1
\end{matrix}
\right) \in \U(3).
$$
Note that
$$J^2 =\left(\begin{matrix}
-1 & 0 & 0 \\ 0 &-1 & 0 \\  0 & 0 & 1
\end{matrix} \right) \in \U(1) \quad \text{ and } \quad J^4 = \left(\begin{matrix}
1 & 0 & 0 \\ 0 & 1 & 0 \\  0 & 0 & 1
\end{matrix} \right) =I.$$

Consider the subgroup $H_3$ of $\U(3)$ generated by $J$ and $\U(1)$, i.e.
\[ H_3 : = \langle \U(1), J \rangle \le \U(3) .\] Then one can easily check that
$J$ normalizes $\U(1)$, and $H_3 = \U(1) \sqcup J\U(1)$.
Note that
\begin{equation} \label{eqn-J} J   \left( \begin{matrix} t & 0 & 0 \\ 0 &   t^{-1}& 0 \\ 0 & 0 & 1\end{matrix}\right) = \left( \begin{matrix} 0 &  t^{-1}&0 \\ -t &0 & 0 \\
 0 & 0 & 1  \end{matrix}\right) \quad \text{ and } \quad
\det(I+xJ\gamma) = (1+x)(1+x^2)
\end{equation}
for all $\gamma \in \U(1)$.

\medskip

We prove a useful lemma.
\begin{lem} \label{lem-sh}
For any $k \in \mathbb Z_{\ge 0}$, we have
\[  \m_{(a+k, b+k,k)}(\U(1)) = \m_{(a,b,0)}(\U(1)) \quad \text{ and } \quad \m_{(a+k, b+k,k)}(H_3) = \m_{(a,b,0)}(H_3). \]
\end{lem}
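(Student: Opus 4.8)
The plan is to reduce the shifted weight $(a+k,b+k,k)$ to $(a,b,0)$ by the standard determinant twist, after observing that the determinant character is trivial on both $\U(1)$ and $H_3$. Since $(a+k,b+k,k)=(a,b,0)+k\,(1,1,1)$ and the one-dimensional character $\mathbf{det}$ of $\U(3)$ has highest weight $(1,1,1)$, tensoring by $\mathbf{det}^{\,k}$ shifts the highest weight accordingly; that is, I would first record the isomorphism of $\U(3)$-representations
\[
V_3(a+k,b+k,k)\ \cong\ \mathbf{det}^{\,k}\otimes V_3(a,b,0).
\]

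Next I would compute the determinant character on the generators of the two groups. On the embedded torus one has $\det\!\begin{pmatrix} t&0&0\\0&t^{-1}&0\\0&0&1\end{pmatrix}=t\cdot t^{-1}\cdot 1=1$, so $\mathbf{det}$ restricts to the trivial character of $\U(1)$; and $\det(J)=1$, since the upper-left $2\times2$ block $\begin{pmatrix}0&1\\-1&0\end{pmatrix}$ has determinant $1$ and the remaining diagonal entry is $1$. As $H_3=\langle\U(1),J\rangle$ and $\mathbf{det}$ is a homomorphism trivial on each generator, $\mathbf{det}|_{H_3}$ is also the trivial character.

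Restricting the displayed isomorphism to $\U(1)$ (respectively to $H_3$) and using that $\mathbf{det}^{\,k}$ becomes trivial there, I obtain isomorphisms of $\U(1)$-modules and of $H_3$-modules
\[
V_3(a+k,b+k,k)\big|_{\U(1)}\cong V_3(a,b,0)\big|_{\U(1)},\qquad V_3(a+k,b+k,k)\big|_{H_3}\cong V_3(a,b,0)\big|_{H_3}.
\]
Since tensoring by a trivial character leaves the multiplicity of the trivial representation unchanged, comparing the multiplicities of $1_{\U(1)}$ and of $1_{H_3}$ on the two sides gives precisely the claimed equalities. The computation is immediate once the twist is in place; the only delicate point — and the crux of the $H_3$ statement — is the verification that $\det(J)=1$, since a value of $-1$ would introduce a sign $(-1)^k$ on the coset $J\U(1)$ and break the $H_3$-identity for odd $k$.
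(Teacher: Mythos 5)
Your proposal is correct and follows essentially the same route as the paper: both identify $V(a+k,b+k,k)\cong \mathbf{det}^{\,k}\otimes V(a,b,0)$ and note that $\det$ is trivial on the embedded $\U(1)$ and that $\det(J)=1$, so the twist does not affect the multiplicity of the trivial representation. Your explicit check of $\det(J)=1$ is the same observation the paper makes, just spelled out in more detail.
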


\begin{proof}
Let $\mathbf{det}$ be the one-dimensional representation of $\U(3)$ defined by the determinant. Then we have \[ V(a+k,b+k,k) = \mathbf{det}^k \otimes V(a,b,0) .\] Since $\det(A)=1$ for any $A \in \U(1)$ and $\det(J)=1$, the assertion follows.
\end{proof}

Thanks to Lemma \ref{lem-sh}, we need to consider the irreducible representations $V(\lambda')$ of $\U(3)$ only for $\lap=(a,b,0)$. In what follows, we assume $\lap=(a,b,0)$ and freely write $V(a,b)=V(\lap)$.
Note that we may also regard $V(a,b)$ as the irreducible representation of $\mathfrak {sl}(3,\mathbb C)$ with highest weight $(a,b)$.
More precisely, define $h_1, h_2 \in \mathfrak{sl}(3, \mathbb C)$ by
\[ h_1 = \mathrm{diag}(1,-1,0) \quad \text{ and } \quad h_2 = \mathrm{diag}(0,1,-1) ,\]
and denote by $\mathfrak h$ the subspace of $\mathfrak {sl}(3, \mathbb C)$ spanned by $h_1$ and $h_2$. We regard any partition $\mu=(\mu_1, \mu_2)$ as an element of $\mathfrak h^*$ by setting
\[  \mu(h_1) = \mu_1 - \mu_2 \quad \text{ and } \quad \mu(h_2) = \mu_2 ,\] and $\mu$ is a weight of $\mathfrak {sl}(3, \mathbb C)$.

Let $V$ and $W= \bigwedge^2 V$ be the fundamental representations of $\U(3)$. Take a basis $\{ v_1, v_2,v_3\}$  of $V$ such that
\begin{equation}
\label{eqn-Jv} Jv_1=-v_2, \quad Jv_2= v_1 \quad \text{ and }  \quad Jv_3=v_3.
\end{equation}
Write
\[ w_{12} = v_1 \wedge v_2, \quad  w_{13}=v_1 \wedge v_3, \quad w_{23}=v_2 \wedge v_3. \]
Then  $\{w_{12}, w_{13}, w_{23} \}$ is a basis for $W$, and we have
\begin{align*}
J w_{12} = w_{12}, \qquad J w_{13} = -w_{23} \quad \text{ and } J w_{23} = w_{13}.
\end{align*}

As $\mathfrak{sl}(3, \mathbb C)$-representations, $V$ and $W$ are equivalent and can be described as follows:
\begin{equation} \label{eqn-dia}{\normalsize
V :\, \tyg(1) \overset{1}{\to}  \tyg(2) \overset{2}{\to}  \tyg(3), \quad \quad W :\, \tyg(1,2) \overset{2}{\to}  \tyg(1,3) \overset{1}{\to}  \tyg(2,3).}
\end{equation}
The diagrams mean
\[ f_1 v_1 = v_2, \quad f_2 v_2=v_3, \quad f_2 w_{12}=w_{13} , \quad f_1 w_{13}=w_{23}\] and $f_i v_j=0$ and $f_i w_{jk}=0$ for other choices of $i,j,k$,
where we set $f_1: =\scriptsize{\begin{pmatrix} 0&0&0\\1&0&0\\0&0&0 \end{pmatrix}}, \ f_2 :=\scriptsize{\begin{pmatrix} 0&0&0\\0&0&0\\0&1&0 \end{pmatrix}} \in \mathfrak{sl}(3,\mathbb C)$.
Equivalently, if we set $e_1: =\scriptsize{\begin{pmatrix} 0&1&0\\0&0&0\\0&0&0 \end{pmatrix}},\ e_2 :=\scriptsize{\begin{pmatrix} 0&0&0\\0&0&1\\0&0&0 \end{pmatrix}} \in \mathfrak{sl}(3,\mathbb C)$, we have
\[ e_2 v_3 = v_2, \quad e_1 v_2=v_1, \quad e_1 w_{23}=w_{13} , \quad e_2 w_{13}=w_{12} \]
and $e_i v_j=0$ and $e_i w_{jk}=0$ for other choices of $i,j,k$. The embedding $\U(2) \hookrightarrow \U(3)$ corresponds to $\langle e_1,h_1,f_1 \rangle \cong \mathfrak{sl}(2,\mathbb C) \hookrightarrow \mathfrak{sl}(3,\mathbb C)$.

We realize the representation $V(a,b)$ for a partition $(a,b)$ as the irreducible component of $\mathrm{Sym}^{a-b}\, V \otimes \mathrm{Sym}^b\, W$ generated by the highest weight vector $v_1^{a-b} \otimes w_{12}^b$. In particular, $V= V(1,0)$ and $W= V(1,1)$.
We identify $V(a+1,b+1)$ with the image of the embedding
\begin{align}\label{eq: iota}
 \iota_{a+1,b+1} :V(a+1,b+1) \hookrightarrow V(a,b) \otimes V(1,1)
\end{align}
given by
\[v_1^{a-b} \otimes w_{12}^{b+1} \mapsto (v_1^{a-b} \otimes w_{12}^b )\otimes w_{12}. \]

For a partition $(a,b)$, set $z:= a-b$ and define a set of partitions which interlace with $(a,b)$:
$$   \Phi(a,b) \seteq \{ (p,q) \ | \ a \ge p \ge b \ge q \ge 0\}.$$
Clearly, we have
$$  | \Phi(a,b)|  = (a-b+1)(b+1)= (z+1)(b+1).$$
It follows from Proposition \ref{thm: branching A2 to A1} that
\begin{equation} \label{Phiab}  \text{the set $\Phi(a,b)$ is exactly the set of $\U(2)$-highest weights in the restriction $V(a,b) |_{\U(2)}$.} \end{equation}

\begin{example} \hfill
\begin{enumerate}
\item[{\rm (a)}] $\Phi(3,2) = \{ (3,0),(2,0),(3,1),(2,1),(3,2),(2,2)  \}$.
\item[{\rm (b)}]$\Phi(4,3) = \{ (4,0),(3,0),(4,1),(3,1),(4,2),(3,2),(4,3),(3,3)  \}$.
\end{enumerate}
\end{example}

For our purpose, we need to precisely describe $\U(2)$-highest weight vectors in the restriction $V(a,b)|_{\U(2)}$. In what follows, we specify such vectors. We freely use the $\mathfrak{sl}(3, \mathbb C)$-representation structure on $V(a,b)$ and apply actions of $e_i, f_i$ $(i=1,2)$ on vectors of $V(a,b)$.

For $V(a,0)$, define
$$v_{(k,0;a,0)}\seteq v_1^kv_3^{a-k} \quad  \text{ for } (k,0) \in \Phi(a,0) \quad (\text{or equivalently, for } 0 \le k \le a ).$$
By considering $\mathfrak{sl}(2, \mathbb C)$ action from \eqref{eqn-dia}, one can see that $v_{(k,0;a,0)}$ are $\U(2)$-highest weight vectors with highest weights $(k,0)$.

Next let us consider $\U(2)$-highest weight vectors
of $V(a+1,1)$ via the embedding  $\iota_{a+1,1}$ in~\eqref{eq: iota}. Obviously, the vectors $$v_{(k+1,1; a+1,1)}:=v_{(k,0;a,0)} \otimes w_{12}$$ are contained in
$V(a+1,1)$ from the construction of $V(a+1,1)$, and each of them is a $\U(2)$-highest weight vector of $V(a+1,1)$, which generates a $(k+1)$-dimensional $\U(2)$-module.
Thus they correspond to the partitions $(k+1,1)$ in $\Phi(a+1,1)$.
Similarly, the vectors
$$v_{(k+1,0; a+1,1)}:=f_2 \left ( v_{(k,0;a,0)} \otimes w_{12} \right )= v_{(k,0;a,0)} \otimes w_{13}$$
are contained in $V(a+1,1)$ and are $\U(2)$-highest weight vectors of $V(a+1,1)$. Each of them generates a $(k+2)$-dimensional $\U(2)$-module. Hence they correspond to the partitions $(k+1,0)$ in $\Phi(a+1,1)$. Since
$$\Phi(a+1,1) =   \{ (k+1,1) \ | \  0 \le k \le a  \}\sqcup \{ (k+1,0) \ | \  0 \le k \le a  \},$$ we have obtained all the $\U(2)$-highest weight vectors of the restriction $V(a+1,1)|_{\U(2)}$.

Generally, for $(p,q) \in \Phi(a,b)$, define
\[ v_{(p,q;a,b)}:=v_1^{p-b} v_3^{a-p} \otimes w_{12}^q w_{13}^{b-q}, \qquad b\le p \le a .\]

\begin{lem} \label{lemU2h}
The vectors $ v_{(p,q;a,b)}$ are $\U(2)$-highest weight vectors of $V(a,b)$ for $(p,q) \in \Phi(a,b)$.
\end{lem}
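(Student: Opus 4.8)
The plan is to verify separately the two defining properties of a $\U(2)$-highest weight vector: that $v_{(p,q;a,b)}$ is annihilated by the raising operator $e_1$ of the embedded $\mathfrak{sl}(2,\C)=\langle e_1,h_1,f_1\rangle$, and that it genuinely lies in the subspace $V(a,b)\subseteq\mathrm{Sym}^{a-b}V\otimes\mathrm{Sym}^b W$. The first property is a one-line computation. Since $e_1$ acts on the tensor and symmetric products as a derivation, and since each of the four building blocks $v_1,v_3,w_{12},w_{13}$ is killed by $e_1$ (the only nontrivial actions being $e_1v_2=v_1$ and $e_1w_{23}=w_{13}$), the Leibniz rule gives $e_1\,v_{(p,q;a,b)}=0$. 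Reading off the torus weight of the monomial $v_1^{p-b}v_3^{a-p}\otimes w_{12}^q w_{13}^{b-q}$ shows that its $\U(2)$-weight is exactly $(p,q)$, consistent with the interlacing constraint $(p,q)\in\Phi(a,b)$ and with the branching statement \eqref{Phiab}. Hence $v_{(p,q;a,b)}$ is, at the very least, a $\U(2)$-highest weight vector of weight $(p,q)$ in the ambient module.

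The substantive point is the membership $v_{(p,q;a,b)}\in V(a,b)$, which I would attempt by induction on $b$ using the $\mathfrak{sl}(3,\C)$-equivariant embedding $\iota_{a,b}$ from \eqref{eq: iota}. The base case $b=0$ is immediate, since $v_{(p,0;a,0)}=v_1^p v_3^{a-p}$ lies in the irreducible module $\mathrm{Sym}^a V=V(a,0)$, and the case $b=1$ has already been treated above by exhibiting the relevant vectors as $v_{(k,0;a,0)}\otimes w_{12}$ and $v_{(k,0;a,0)}\otimes w_{13}$. For the inductive step I would peel off one factor of $W$, writing the target monomial as the image under $\iota_{a,b}\colon V(a,b)\hookrightarrow V(a-1,b-1)\otimes V(1,1)$ of a highest weight vector $v_{(p',q';a-1,b-1)}$ tensored with $w_{12}$ or $w_{13}$, and invoking equivariance of $\iota_{a,b}$ to transport membership from stage $b-1$ to stage $b$. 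The operators that move a given highest weight vector to its neighbours in $\Phi(a,b)$ are $f_2$, which converts $w_{12}\mapsto w_{13}$ while annihilating $v_1$, and the composite $f_2f_1$, which effects $v_1\mapsto v_3$.

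The main obstacle I anticipate is precisely this membership step. Applying lowering operators to the highest weight vector $v_1^{a-b}\otimes w_{12}^b$ of $V(a,b)$ produces \emph{linear combinations} of monomials rather than a single monomial, so one cannot simply read membership off the generation. The one clean regime is $p=a$: because $f_2$ annihilates $v_1$ and, crucially, $[e_1,f_2]=0$, the iterates $f_2^{\,b-q}$ applied to the generator remain in the $e_1$-kernel and yield the monomials $v_{(a,q;a,b)}$ exactly, with no cross terms. The remaining positions with $p<a$ require converting some $v_1$'s to $v_3$'s via $f_2f_1$, and here $f_1$ does not commute with $e_1$, so the naive lowering genuinely mixes in vectors supported off the pure monomial; reconciling the clean monomial form of $v_{(p,q;a,b)}$ with honest membership in $V(a,b)$ — uniformly in $(a,b)$ and $(p,q)$, and organized so that the multiplicity-one statement \eqref{Phiab} pins each highest weight vector down up to scalar — is the delicate part of the argument and the step on which I would concentrate the most care.
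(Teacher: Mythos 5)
Your first part --- the Leibniz computation showing $e_1$ annihilates each monomial, together with reading off the $\U(2)$-weight $(p,q)$ --- matches what the paper dismisses as straightforward, and your overall plan for membership (induction on $b$ through the embedding \eqref{eq: iota}, base case $\mathrm{Sym}^a V = V(a,0)$) is also the paper's. But the obstacle you isolate in your final paragraph is a genuine gap in your write-up, and the way out is not to handle the $p<a$ positions more carefully with $f_1$: the paper's proof never applies $f_1$ (or $f_2f_1$) at all, so the mixing you worry about simply never arises.

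What you are missing is that the variation in $p$ --- the number of $v_3$'s versus $v_1$'s --- need not be produced by lowering operators; it is already present at the bottom of the induction and is carried along for free. By the inductive hypothesis, $v_{(p-1,\,b-1;\,a-1,b-1)} = v_1^{p-b}v_3^{a-p}\otimes w_{12}^{b-1}$ lies in $V(a-1,b-1)$ for \emph{every} $b\le p\le a$ (at the base $b=0$ all monomials $v_1^k v_3^{a-k}$ lie in $\mathrm{Sym}^a V$, so every value of $p$ is available from the start). Tensoring each of these with $w_{12}$ yields $v_{(p,b;a,b)}\in V(a,b)$, i.e.\ the whole top row $q=b$ of $\Phi(a,b)$ for all $p$ simultaneously. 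Now your ``clean regime'' observation applies to each of these vectors, not only to $p=a$: since $f_2$ annihilates $v_1$, $v_3$ and $w_{13}$ alike, one gets exactly
\[
f_2^{\,l}\, v_{(p,b;a,b)} \;=\; \tfrac{b!}{(b-l)!}\, v_{(p,\,b-l;\,a,b)} \qquad (0\le l\le b),
\]
a single monomial with no cross terms, and $V(a,b)$ is $f_2$-stable. This sweeps out the $q$-direction and exhausts $\Phi(a,b)$, closing the gap. (One caveat applies to you and to the paper equally: the step asserting that $v\otimes w_{12}$ lands in the irreducible summand $V(a,b)$ of $V(a-1,b-1)\otimes V(1,1)$, rather than picking up components in the other summands, is not a consequence of equivariance --- the map $v\mapsto v\otimes w_{12}$ is not a module map --- and the paper justifies it only by appeal to ``the construction of $V(a,b)$''; if you rework the argument, this is the hypothesis that actually deserves scrutiny.)
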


\begin{proof}
It is straightforward to check that $v_{(p,q;a,b)}$ are $\U(2)$-highest weight vectors. For induction, assume that $v_{(p,q;a-1,b-1)}$ are contained in $V(a-1,b-1)$. Then, by the construction of $V(a,b)$, we have $$v_{(k+b-1,b-1; a-1,b-1)} \otimes w_{12} =\left ( v_1^k v_3^{a-b-k} \otimes w_{12}^{b-1} \right ) \otimes w_{12} = v_1^k v_3^{a-b-k} \otimes w_{12}^{b}=v_{(k+b,b;a,b)} \in V(a,b)$$  for $0 \le k \le a-b$.
Since \[f_2^lv_{(k+b,b;a,b)} = \tfrac {b!}{(b-l)!} v_1^k v_3^{a-b-k} \otimes w_{12}^{b-l}w_{13}^l =  \tfrac {b!}{(b-l)!} v_{(k+b,b-l;a,b)} \quad \text{ for } 0 \le l \le b,  \]
we have $v_{(p,q;a,b)} \in V(a,b)$ for any $(p,q) \in \Phi(a,b)$. \end{proof}

The vectors in the above lemma are distinct, linearly independent and exhaust all the $\U(2)$-highest weight vectors in $V(a,b)$.
Note that
\begin{eqnarray}&&
\parbox{95ex}{
\begin{enumerate}
\item[(J1)] \label{it: rel1}
Since $J(v_{(p,q;a,b)})=J(v_1^{p-b}v_3^{a-p} \otimes w_{12}^{q} w_{13}^{b- q})  = (-1)^{p-q}     v_2^{p-b}v_3^{a-p} \otimes w_{12}^{q} w_{23}^{b- q}
$, the vector $J(v_{(p,q;a,b)})$
is a $\U(2)$-lowest vector in the $\U(2)$-representation generated by $v_{(p,q;a,b)}$.
\item[(J2)]\label{it: rel2} $J f_1 = -e_1 J$.
\end{enumerate}
}\label{eq: J condition}
\end{eqnarray}
\begin{prop} \label{proplap}
For a partition $\lap=(a,b,0)$, the multiplicity $\m_\lap(\U(1))$ of the trivial representation in $V(\lap)|_{\U(1)}$ is equal to the cardinality of the set
 \[ \Phi^{(2)}(a,b):= \{ (p,q) \in \Phi (a,b)   \ | \  p \equiv_2 q \},\] and the multiplicity $\m_\lap(H_3)$ of the trivial representation in $V(\lap)|_{H_3}$ is equal to the cardinality of the set
$$\Phi^{(4)}(a,b):= \{ (p,q) \in   \Phi(a,b)  \ | \  p \equiv_4 q \}.$$
\end{prop}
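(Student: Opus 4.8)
The plan is to reduce both multiplicity computations to the $\U(2)$-situation already settled in Theorem~\ref{thm :g21}, using the multiplicity-free branching recorded in~\eqref{Phiab}. By that branching, $V(a,b)|_{\U(2)}=\bigoplus_{(p,q)\in\Phi(a,b)}V_2(p,q)$, where $V_2(p,q)$ is the irreducible $\U(2)$-summand of highest weight $(p,q)$ generated by the highest weight vector $v_{(p,q;a,b)}$ of Lemma~\ref{lemU2h}. The key structural observation I would exploit is that \emph{both} $\U(1)$ and $J$ lie in the copy of $\U(2)$ inside $\U(3)$: indeed $J$ is the image of $J_2=\left(\begin{smallmatrix}0&1\\-1&0\end{smallmatrix}\right)$ under $\U(2)\hookrightarrow\U(3)$. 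Hence each summand $V_2(p,q)$ is preserved by $\U(1)$ and by $J$, and every computation can be carried out one summand at a time.

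For $\m_\lap(\U(1))$, I would note that within a single $\mathfrak{sl}(2,\C)$-string $V_2(p,q)$ the $\U(1)$-fixed vectors are exactly the weight vectors of weight $(p-i,q+i)$ with $p-i=q+i$; such a vector exists, and is then unique up to scalar, precisely when $p-q$ is even, i.e.\ $p\equiv_2 q$. Writing $u_{(p,q)}:=f_1^{(p-q)/2}v_{(p,q;a,b)}$ for this vector when $p\equiv_2 q$, we obtain $V(a,b)^{\U(1)}=\bigoplus_{(p,q)\in\Phi^{(2)}(a,b)}\C\,u_{(p,q)}$, so $\m_\lap(\U(1))=|\Phi^{(2)}(a,b)|$.

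For $\m_\lap(H_3)$, since $H_3=\langle\U(1),J\rangle$ a vector is $H_3$-fixed iff it is fixed by both generators, whence $V(a,b)^{H_3}=V(a,b)^{\U(1)}\cap V(a,b)^{J}$. As $J\in\U(2)$ normalizes $\U(1)$ and preserves $V_2(p,q)$, it preserves the one-dimensional space $\C\,u_{(p,q)}$ and acts there by a scalar $\zeta_{(p,q)}$; therefore $\dim V(a,b)^{H_3}$ is the number of $(p,q)\in\Phi^{(2)}(a,b)$ with $\zeta_{(p,q)}=1$. The remaining task is to identify $\zeta_{(p,q)}$, and this is exactly the content of Theorem~\ref{thm :g21} applied to $V_2(p,q)$: the trivial $H_2=\langle\U(1),J_2\rangle$-module occurs in $V_2(p,q)$ iff $p\equiv_4 q$, and since the $\U(1)$-fixed space of $V_2(p,q)$ is the single line $\C\,u_{(p,q)}$, this says $\zeta_{(p,q)}=1\iff p\equiv_4 q$. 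Counting then yields $\m_\lap(H_3)=|\Phi^{(4)}(a,b)|$.

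The step needing the most care is pinning down $\zeta_{(p,q)}$ \emph{exactly}, rather than merely up to a positive scalar, should one prefer a self-contained verification in place of quoting Theorem~\ref{thm :g21}. Setting $j=(p-q)/2$ and using relation (J2), $Jf_1=-e_1J$, one gets $J\,u_{(p,q)}=(-1)^{j}e_1^{j}\,J v_{(p,q;a,b)}$; by (J1) the vector $Jv_{(p,q;a,b)}$ is the $\mathfrak{sl}(2)$-lowest weight vector of the string up to the sign $(-1)^{p-q}=1$ (recall $p\equiv_2 q$), so $e_1^{j}Jv_{(p,q;a,b)}$ is a \emph{positive} multiple of $u_{(p,q)}$, giving $\zeta_{(p,q)}=(-1)^{j}r$ with $r>0$. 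The obstacle is that this alone leaves the magnitude $r$ undetermined; I would resolve it by observing that $J^2=\mathrm{diag}(-1,-1,1)\in\U(1)$ fixes $u_{(p,q)}$, so $\zeta_{(p,q)}^2=1$ and hence $\zeta_{(p,q)}\in\{\pm1\}$. Combined with $\zeta_{(p,q)}=(-1)^{j}r$, $r>0$, this forces $r=1$ and $\zeta_{(p,q)}=(-1)^{j}=(-1)^{(p-q)/2}$, which equals $1$ exactly when $p\equiv_4 q$, matching the count above.
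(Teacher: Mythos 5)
Your proof is correct and follows the same skeleton as the paper's: branch $V(a,b)$ multiplicity-freely to the embedded $\U(2)$ over $\Phi(a,b)$ using \eqref{Phiab}, observe that each summand $V_2(p,q)$ contributes a one-dimensional $\U(1)$-fixed line exactly when $p \equiv_2 q$, and then decide whether $J$ acts by $+1$ on that line. The one genuine difference is in the last step. The paper computes the $J$-eigenvalue directly on the explicit vector $f_1^k v_{(p,q;a,b)}$ (where $p-q=2k$), using (J1) and (J2) together with the exact identity $e_1^k(\text{lowest vector}) = f_1^k(\text{highest vector})$ to get the eigenvalue $(-1)^k$ on the nose. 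Your primary route instead notes that $H_3$ is exactly the image of $H_2=\langle \U(1), J_2\rangle$ under $\U(2)\hookrightarrow\U(3)$, so each summand can be handled by quoting Theorem~\ref{thm :g21}; this is a legitimate and arguably cleaner packaging that makes the logical dependence on the $g=2$ computation explicit rather than redoing it. Your fallback determination of the scalar --- sign $(-1)^{(p-q)/2}$ from (J1)/(J2), magnitude forced to $1$ because $J^2=\mathrm{diag}(-1,-1,1)\in\U(1)$ fixes the line so $\zeta_{(p,q)}^2=1$ --- is a nice device for avoiding the explicit constant and agrees with the paper's exact calculation.
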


\begin{proof}
From the embeddings of $\U(1)$ into $\U(3)$, we see that the multiplicity $\m_\lap(\U(1))$ is equal to the number of linearly independent vectors in $V(\lap)$ with weight $\mu$ such that
$$  \mu(h_1) =0 .$$ Similarly, the multiplicity $\m_\lap(H_3)$ is equal to the number of linearly independent vectors $v$ in $V(\lap)$ with weight $\mu$ such that
$$  \mu(h_1) =0 \quad  \text{ and }  \quad Jv=v.$$

If we consider the restriction $V(\lap)|_{\U(2)}$, then the condition $\mu(h_1) =0$ means weight $0$.
A weight $0$ vector occurs in $V_2(p,q)$ precisely when $p-q \equiv_2 0$ with multiplicity 1, where $V_2(p,q)$ is the irreducible representation of $\U(2)$ with highest weight $(p,q)$. Thus the first assertion follows from \eqref{Phiab}.

Write $p-q = 2k$. Then $v\seteq f_1^k v_{(p,q;a,b)}$ is a weight $0$ vector by Lemma \ref{lemU2h}. Using (J1) and (J2) in~\eqref{eq: J condition}, we obtain
\begin{align*}
J(v)&=J f_1^k (v_1^{p-b}v_3^{a-p} \otimes w_{12}^{q} w_{13}^{b- q})
 = (-1)^k  e_1^k J(v_1^{p-b}v_3^{a-p} \otimes w_{12}^{q} w_{13}^{b- q})   \\
& = (-1)^{3k} e_1^k (v_2^{p-b}v_3^{a-p} \otimes w_{12}^{q} w_{23}^{b- q}) = (-1)^{3k}  f_1^k (v_1^{p-b}v_3^{a-p} \otimes w_{12}^{q}w_{13}^{b- q})   = (-1)^{k} v .
\end{align*}
Thus $v$ is fixed only when $k$ is even. Thus the second assertion follows.
\end{proof}

The cardinalities of the sets $\Phi^{(2)}$ and  $\Phi^{(4)}$ are computed in the following proposition.
\begin{prop} \label{prop-card}
For a partition $(a,b)$, write $z:= a-b$.
 Then we have
\[ | \Phi^{(2)} (a,b) |  = \left \lceil (z+1)(b+1)/ 2 \right \rceil, \]
and
$$
 | \Phi^{(4)} (a,b) |  =
 \dfrac{1}{2}\left(\left\lceil (z+1)(b+1)/{2}\right\rceil  + \tau(z,b) \right)
$$
where
$\tau(z,b) \in \{0,\pm1\}$ is defined on the congruence classes of $z$ and $b$ modulo $4$ as follows:
\begin{center} $\tau(z,b)=$ {\scriptsize
\begin{tabular}{|c||c|c|c|c|}
\hline
$z\backslash b$ & 0&1&2&3\\ \hline \hline
0&$1$ & $1$&$0$& $0$ \\ \hline
1&$1$&$0$ & $-1$ & $0$ \\ \hline
2&$0$&$-1$ & $-1$ & $0$ \\ \hline
3&$0$&$0$ & $0$ & $0$ \\ \hline
\end{tabular}}
 \end{center}
\end{prop}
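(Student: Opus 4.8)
The plan is to count $\Phi^{(2)}(a,b)$ and $\Phi^{(4)}(a,b)$ by a root-of-unity filter applied to the difference $p-q$ over the product region $\Phi(a,b)=\{b,b+1,\dots,a\}\times\{0,1,\dots,b\}$, whose two factors run over $z+1$ and $b+1$ consecutive integers respectively. Writing $\zeta=\sqrt{-1}$ for a primitive fourth root of unity, the indicator that $p\equiv_2 q$ is $\tfrac12\bigl(1+(-1)^{p-q}\bigr)$ and the indicator that $p\equiv_4 q$ is $\tfrac14\sum_{j=0}^{3}\zeta^{\,j(p-q)}$. Because the region is a product, summing either indicator factors into a product of two independent one-variable geometric sums, which is what makes the whole computation explicit.

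First I would handle $\Phi^{(2)}(a,b)$. Summing $\tfrac12\bigl(1+(-1)^{p-q}\bigr)$ gives $|\Phi^{(2)}(a,b)|=\tfrac12\bigl((z+1)(b+1)+S\bigr)$, where $S=\bigl(\sum_{p=b}^{a}(-1)^p\bigr)\bigl(\sum_{q=0}^{b}(-1)^q\bigr)$. A one-line evaluation of the alternating sums shows $S=\delta(z\equiv_2 0)\,\delta(b\equiv_2 0)$. Since $(z+1)(b+1)$ is odd exactly when both $z$ and $b$ are even — precisely the case $S=1$ — one checks directly in the two parity cases that $\tfrac12\bigl((z+1)(b+1)+S\bigr)=\lceil(z+1)(b+1)/2\rceil$, which is the first asserted formula.

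Next I would treat $\Phi^{(4)}(a,b)$ by the same filter. Summing $\tfrac14\sum_{j=0}^{3}\zeta^{\,j(p-q)}$ gives $|\Phi^{(4)}(a,b)|=\tfrac14\sum_{j=0}^{3}T_j$ with $T_j=\bigl(\sum_{p=b}^{a}\zeta^{\,jp}\bigr)\bigl(\sum_{q=0}^{b}\zeta^{-jq}\bigr)$. Here $T_0=(z+1)(b+1)$, $T_2=S$, and the key symmetry $T_3=\overline{T_1}$ (from $\zeta^{3}=\overline{\zeta}$), so that $\tfrac14(T_0+T_2)=\tfrac12|\Phi^{(2)}(a,b)|$ while $\tfrac14(T_1+T_3)=\tfrac12\,\mathrm{Re}(T_1)$. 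This collapses the four-term filter into $|\Phi^{(4)}(a,b)|=\tfrac12\bigl(|\Phi^{(2)}(a,b)|+\mathrm{Re}(T_1)\bigr)$, so it remains only to identify $\tau(z,b)=\mathrm{Re}(T_1)$ with the table. Writing $T_1=\zeta^{\,b}\bigl(\sum_{k=0}^{z}\zeta^{\,k}\bigr)\,\overline{\bigl(\sum_{q=0}^{b}\zeta^{\,q}\bigr)}$ and using that $\sum_{k=0}^{n}\zeta^{\,k}$ equals $1,\,1+\zeta,\,\zeta,\,0$ according as $n\equiv 0,1,2,3\pmod 4$, one sees that $\mathrm{Re}(T_1)$ depends only on $z$ and $b$ modulo $4$; evaluating the resulting sixteen cases reproduces the displayed values of $\tau(z,b)$.

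All the conceptual content sits in the filter identities of the first paragraph, so the main obstacle is purely organizational: keeping the alternating-sum evaluation for $S$ and the $\zeta$-power geometric sums straight across the parity and residue cases, and confirming that the sixteen-entry computation of $\mathrm{Re}(T_1)$ matches $\tau(z,b)$ entry by entry. The symmetry $T_3=\overline{T_1}$ is the structural point that guarantees $|\Phi^{(4)}|$ is real and yields the clean shape $\tfrac12(|\Phi^{(2)}|+\tau)$ of the final formula.
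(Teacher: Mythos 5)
Your proof is correct: I checked the alternating-sum evaluation $S=\delta(z\equiv_2 0)\,\delta(b\equiv_2 0)$, the reduction $|\Phi^{(4)}(a,b)|=\tfrac12\bigl(|\Phi^{(2)}(a,b)|+\mathrm{Re}(T_1)\bigr)$ coming from $T_3=\overline{T_1}$, and all sixteen residue cases of $\mathrm{Re}(T_1)=\zeta^{b}\,G_z\,\overline{G_b}$ (with $G_n=1,\,1+\zeta,\,\zeta,\,0$ for $n\equiv 0,1,2,3 \pmod 4$) against the displayed table for $\tau(z,b)$; everything matches. Your route is, however, genuinely different from the paper's. The paper arranges the pairs $(p,q)\in\Phi(a,b)$ together with the integers $p-q+1$ into an explicit $(z+1)\times(b+1)$ array and obtains both cardinalities by directly counting, from the periodic pattern of that array, how many entries are odd (for $\Phi^{(2)}$) and how many are congruent to $1$ modulo $4$ (for $\Phi^{(4)}$); the table for $\tau$ is in effect read off from that enumeration rather than derived by a formula. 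You instead exploit the product structure $\Phi(a,b)=\{b,\dots,a\}\times\{0,\dots,b\}$ and apply a root-of-unity filter in $p-q$, so the count factors into one-variable geometric sums and the shape $\tfrac12\bigl(\lceil (z+1)(b+1)/2\rceil+\tau\bigr)$ of the answer emerges structurally rather than by inspection. What your approach buys is a mechanical, easily auditable derivation of the closed form and an a priori explanation of why $\tau$ depends only on $z$ and $b$ modulo $4$; what the paper's array buys is that the same picture is reused verbatim in Section~5 for the harder counts $|\Phi^{(2,4)}|$ and $|\Phi^{(4,4)}|$, which involve the two congruences $p+q\equiv_4 0$ and $p-q\equiv_4 0$ simultaneously (your filter method would still apply there, with a double filter in $p+q$ and $p-q$, but the paper opts for the pictorial count throughout).
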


\begin{proof}
The elements $(p,q)$ in $\Phi(a,b)$ and the corresponding dimensions $p-q+1$ can be each arranged into an array of size
$(z+1) \times (b+1)$ as follows, where we put $(p,q)$ in the left and its   corresponding dimensions  in the right:
\begin{equation} \label{eq: array and dimension}
{\scriptsize
\left.
\begin{matrix}
(a,0) & (a,1) & \cdots & (a,b) \\
(a-1,0) & (a-1,1) & \cdots & (a-1,b) \\
\vdots & \vdots & \cdots & \vdots  \\
(b,0) & (b,1)& \cdots & (b,b)
\end{matrix} \ \ \right|
\quad
\begin{matrix}
a+1 & a & \cdots & a-b+1 \\
a & a-1 & \cdots & a-b \\
\vdots & \vdots & \cdots & \vdots  \\
b+1 & b  & \cdots & 1
\end{matrix} }
\end{equation}
By counting the number of odd integers in the right array, we obtain
\[ | \Phi^{(2)} (a,b) |  = \left \lceil (z+1)(b+1)/ 2 \right \rceil, \]
and  by counting the number of integers congruent to 1 modulo $4$ in the right array,
we get
\[
 | \Phi^{(4)} (a,b) |  =
 \dfrac{1}{2}\left(\left\lceil(z+1)(b+1)/{2}\right\rceil  + \tau(z,b) \right).  \qedhere
\]
\end{proof}

Now we state and prove the main theorem of this subsection.

\begin{theorem} \label{thm :g3}
For a partition $\lap=(a+k,b+k, k)$, $k \in \mathbb Z_{\ge 0}$, we have
\[  \m_{\lap}(\U(1))= \left \lceil (z+1)(b+1)/ 2 \right \rceil \quad \text{ and } \quad \mathfrak m_{\lap}(H_3)=  \dfrac{1}{2}\left(\left\lceil(z+1)(b+1)/{2}\right\rceil  + \tau(z,b) \right) , \] where we set $z:=a-b$.
Furthermore, for any $m \in \Z_{\ge1}$, we have
\begin{align}\label{eq: g3 general}
\prod_{i=1}^m (1+x_i)(1+x_i^2) & =\sum_{\substack{\la \trianglelefteq (3^m) \\ \la=(3^k,2^b,1^z)}} \tau(z,b)\,  S^{\U(m)}_{\la}(\mathbf x), \\
 \prod_{i=1}^m (1-x_i)(1+x_i^2)&= \sum_{\substack{\la \trianglelefteq (3^m) \\ \la=(3^k,2^b,1^z)}} (-1)^{|\la|}  \tau(z,b)  S^{\U(m)}_{\la}(\mathbf x). \label{eq:- g3 general}
\end{align}

\end{theorem}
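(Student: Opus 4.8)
The plan is to assemble the first (multiplicity) assertion from the machinery already established, and then feed those formulas into the coset decomposition of $H_3$ to extract the two symmetric-function identities, exactly as in the proof of Theorem~\ref{thm :g21}.

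For the multiplicity formulas I would proceed in three steps. By Lemma~\ref{lem-sh} it suffices to treat $\lap=(a,b,0)$, since tensoring by $\mathbf{det}^k$ leaves the restrictions to both $\U(1)$ and $H_3$ unchanged, so all the quantities in question are independent of $k$. Proposition~\ref{proplap} then identifies $\m_{(a,b,0)}(\U(1))=|\Phi^{(2)}(a,b)|$ and $\m_{(a,b,0)}(H_3)=|\Phi^{(4)}(a,b)|$, and Proposition~\ref{prop-card} evaluates these two cardinalities as $\lceil (z+1)(b+1)/2\rceil$ and $\tfrac12\bigl(\lceil (z+1)(b+1)/2\rceil+\tau(z,b)\bigr)$, which is precisely the asserted pair of values of $\m_{\lap}(\U(1))$ and $\m_{\lap}(H_3)$ for all $k\ge 0$.

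For the identity \eqref{eq: g3 general} I would use $H_3=\U(1)\sqcup J\U(1)$ to split the Haar integral over $H_3$ into its two cosets, each weighted by $\tfrac12$:
\begin{align*}
\int_{H_3}\prod_{i=1}^m\det(I+x_i\gamma)\,d\gamma
=\tfrac12\int_{\U(1)}\prod_{i=1}^m\det(I+x_i\gamma)\,du
+\tfrac12\int_{\U(1)}\prod_{i=1}^m\det(I+x_iJ\gamma)\,du.
\end{align*}
The decisive input, supplied by \eqref{eqn-J}, is that $\det(I+xJ\gamma)=(1+x)(1+x^2)$ does \emph{not} depend on $\gamma\in\U(1)$, so the second integral is simply $\tfrac12\prod_{i=1}^m(1+x_i)(1+x_i^2)$. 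Applying Proposition~\ref{prop-main} to the left-hand side and to the first integral, and then solving for the product, the coefficient of $S^{\U(m)}_\la(\mathbf x)$ becomes $2\m_{\lap}(H_3)-\m_{\lap}(\U(1))$, which by the first part collapses to $\tau(z,b)$. Since $\lap=(a+k,b+k,k)$ transposes to $\la=(3^k,2^b,1^z)$ with $z=a-b$, and $\tau(z,b)$ is independent of $k$, this is exactly \eqref{eq: g3 general}; the companion identity \eqref{eq:- g3 general} then follows at once by the substitution $x_i\mapsto -x_i$, using $S^{\U(m)}_\la(-\mathbf x)=(-1)^{|\la|}S^{\U(m)}_\la(\mathbf x)$ (equivalently, from the second formula of Proposition~\ref{prop-main}).

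Given all of this preparation, the theorem itself is essentially bookkeeping: the one point to verify by hand is that $2\m_{\lap}(H_3)-\m_{\lap}(\U(1))$ simplifies to $\tau(z,b)$, which is immediate from Proposition~\ref{prop-card}, together with the index translation $\lap\leftrightarrow\la=(3^k,2^b,1^z)$ under transposition. The genuine difficulty of the whole development lies upstream---in the modulo-$4$ count of Proposition~\ref{prop-card} and in the analysis of the $J$-action via (J1)--(J2) behind Proposition~\ref{proplap}---rather than in this final assembly.
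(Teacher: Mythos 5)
Your proposal is correct and follows essentially the same route as the paper: reduce to $\lambda'=(a,b,0)$ via Lemma~\ref{lem-sh}, invoke Propositions~\ref{proplap} and \ref{prop-card} for the two multiplicities, split the Haar integral over $H_3=\U(1)\sqcup J\U(1)$ using the constancy of $\det(I+xJ\gamma)$ from \eqref{eqn-J}, and read off the coefficient $2\m_{\lap}(H_3)-\m_{\lap}(\U(1))=\tau(z,b)$, with \eqref{eq:- g3 general} obtained by the substitution $x_i\mapsto -x_i$. No gaps.
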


\begin{proof}
From Proposition \ref{proplap}, we obtain
\[   \m_{\lap}(\U(1))= | \Phi^{(2)} (a,b) | \quad \text{ and } \quad   \m_{\lap}(H_3)=| \Phi^{(4)} (a,b) |, \]
and the formulas for $\m_{\lap}(\U(1))$ and $\m_{\lap}(H_3)$ are from Proposition \ref{prop-card}.

Let $d\gamma, d \gamma_1$ be the probability Haar measures on $H_3, \U(1) \le \U(3)$, respectively.  Since $H_{3}=\U(1) \sqcup J\U(1)$, we use \eqref{eqn-J} and Proposition \ref{prop-main} to obtain
\begin{align*}
 \sum_{\la \trianglelefteq (3^m)}\m_{\lap}(H_3)  S^{\U(m)}_{\la}(\mathbf x)&=\int_{H_3} \prod_{i=1}^m  \det(I+x_i\gamma) d\gamma \\ & = \dfrac{1}{2} \int_{\U(1)}  \prod_{i=1}^m  \det(1+x_i\gamma) d\gamma_1
+  \dfrac{1}{2}  \int_{J\U(1)} \prod_{i=1}^m (1+x_i)(1+x_i^2) d\gamma_1 \allowdisplaybreaks\\
& = \dfrac{1}{2}  \sum_{\la \trianglelefteq (3^m)}\m_{\lap}(\U(1))  S^{\U(m)}_{\la}(\mathbf x) +  \dfrac{1}{2}   \prod_{i=1}^m (1+x_i)(1+x_i^2).
\end{align*}
Hence, using Lemma \ref{lem-sh},
\begin{align*}
 \prod_{i=1}^m (1+x_i)(1+x_i^2)=   \sum_{\la \trianglelefteq (3^m)}\left ( 2 \m_{\lap}(H_3) - \m_{\lap}(\U(1)) \right )  S^{\U(m)}_{\la}(\mathbf x)  =
\sum_{\substack{\la \trianglelefteq (3^m) \\ \la=(3^k,2^b,1^z)}} \tau(z,b)  S^{\U(m)}_{\la}(\mathbf x).
\end{align*}

The identity \eqref{eq:- g3 general} follows from \eqref{eq: g3 general} by replacing $x_i$ with $-x_i$.
\end{proof}

\begin{remark} \label{rmk: g3} The left hand side of~\eqref{eq: g3 general} is a simple combination of the monomial symmetric functions $\mathsf m^{(m)}_\la$ associated with partitions $\la$ in $m$-variables:
$$ \prod_{i=1}^m (1+x_i)(1+x_i^2)= \sum_{ \la  \trianglelefteq (3^m) } \mathsf{m}^{(m)}_{\la}(\mathbf x).$$
\end{remark}

\begin{example} \label{ex: g3 huge}
(1) Let us see an example for the case $m = 7$ in Theorem~\ref{thm :g3}. We have
 \begin{align}\label{eq: sl3 ex7}
 \prod_{i=1}^7 (1+x_i)(1+x_i^2)=  \sum_{\substack{\la \trianglelefteq (3^7) \\ \la=(3^k,2^b,1^z)}} \tau(z,b)  S^{\U(7)}_{\la}(\mathbf x)
\end{align}
One can see that $S^{\U(7)}_{(3^2,2^2, 1^{1})}(\mathbf x)$ appears in the right hand side of~\eqref{eq: sl3 ex7} with the coefficient $-1$ as
$\tau(1,2) = -1$. As a polynomial, $S^{\U(7)}_{(3^2,2^2, 1^{1})}(\mathbf x)$ contains $1778$ monomial terms and $S^{\U(7)}_{(3^2,2^2, 1^{1})}(\mathbf 1)= 7560$, where $\mathbf 1 =(1,1, \dots , 1)$.
There are $18$ Schur functions with coefficient $-1$ in the right hand side of~\eqref{eq: sl3 ex7} including $S^{\U(7)}_{(3^2,2^2,1^{1})}(\mathbf x)$. On the other hand, we obtain a polynomial in the left hand side of~\eqref{eq: sl3 ex7}, which contains $16384$
monomial terms with coefficient all $1$. One can check this, for example, using \textsc{SageMath}.

(2) Let us consider the case $m = 20$ in Theorem~\ref{thm :g3}. We have
 \begin{align}\label{eq: sl3 ex20}
 \prod_{i=1}^{20} (1+x_i)(1+x_i^2)=  \sum_{\substack{\la \trianglelefteq (3^{20}) \\ \la=(3^k,2^b,1^z)}} \tau(z,b)  S^{\U(20)}_{\la}(\mathbf x)
\end{align}
Here $S^{\U(20)}_{(2^6, 1^{5})}(\mathbf x)$ appears in the right hand side of~\eqref{eq: sl3 ex20} with the coefficient $-1$ as
$\tau(5,6)=\tau(1,2) = -1$. Using \textsc{WeylCharacterRing} in \textsc{SageMath},  one can check that $S^{\U(20)}_{(2^6, 1^{5})}(\mathbf 1)= 4557090720$.
Including $S^{\U(20)}_{(2^6, 1^{5})}$, there are $315$ Schur functions with coefficient $-1$ in the right hand side of~\eqref{eq: sl3 ex20} and
the specialization of the left hand side of~\eqref{eq: sl3 ex20} at $\mathbf 1$ is equal to $4^{20}=1099511627776$. However, checking
 whether the right hand side of~\eqref{eq: sl3 ex20} coincides with the left hand side of~\eqref{eq: sl3 ex20} may well go beyond the capacity of a regular personal computer.\end{example}

\subsection{Subgroup $\langle \U(1), J,\pmb \zeta_4 \rangle$}

Let us set
$$
\pmb \zeta_4 := \left( \begin{matrix}
 \sqrt{-1} & 0 & 0 \\
0 & \sqrt{-1} & 0 \\
0 & 0 & 1
\end{matrix}\right) \in U(3) \setminus U(1).
$$
Then we have
\begin{equation} \label{eqn-Jz}  \det(I+x\pmb \zeta_4J\gamma)
= (1+x)(1-x^2) \quad\text{ and } \quad  \det(I+xJ\gamma) = (1+x)(1+x^2).\end{equation}

Define $H_{3,4}$ to be the subgroup generated by $\U(1),J,\pmb \zeta_4$, i.e.
\[ H_{3,4} : = \langle \U(1), J , \pmb \zeta_4 \rangle  \le \U(3) . \]
Then we have the coset decomposition
\begin{equation} \label{H34} H_{3,4}= \U(1) \sqcup J\U(1) \sqcup \pmb \zeta_4 \U(1) \sqcup \pmb \zeta_4 J \U(1) .\end{equation}
Let $H'_{3,4}$ be the subgroup  of $H_{3,4}$ generated by $\U(1), \pmb \zeta_4$. Note that
\begin{equation} \label{Hp34} H_{3,4} = H'_{3,4} \sqcup J H'_{3,4} . \end{equation}

\begin{lem} \label{lem-sh-z}
For any $c \in \mathbb Z_{\ge 2}$, we have
\[  \m_{(a, b,c)}(H_{3,4}) = \m_{(a-2, b-2, c-2)}(H_{3,4}) \quad \text{ and } \quad \m_{(a, b,c)}(H'_{3,4}) = \m_{(a-2, b-2, c-2)}(H'_{3,4}) .\]
\end{lem}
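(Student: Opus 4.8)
The plan is to mimic the proof of Lemma~\ref{lem-sh}: I would realize the uniform shift of the partition as an untwisting by a power of the determinant character, and then check that this character restricts trivially to the subgroups in question.

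First I would record that, for $c \ge 2$, there is an isomorphism of $\U(3)$-representations
\[ V(a,b,c) \cong \mathbf{det}^2 \otimes V(a-2,b-2,c-2), \]
since subtracting the same integer from every part of a highest weight amounts to tensoring by the corresponding power of the one-dimensional determinant representation, exactly as in Lemma~\ref{lem-sh} (where the shift was by an arbitrary $k$ and the twist was $\mathbf{det}^k$). The hypothesis $c \ge 2$ guarantees that $(a-2,b-2,c-2)$ is again a partition.

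Next I would evaluate the determinant character on the generators of $H_{3,4}$. Every element of $\U(1)$ has determinant $1$; a direct computation gives $\det(J) = 1$ and $\det(\pmb\zeta_4) = (\sqrt{-1})^2\cdot 1 = -1$. The decisive point is that, although $\det(\pmb\zeta_4) = -1$, one has $\det(\pmb\zeta_4)^2 = 1$, while trivially $\det^2 = 1$ on $\U(1)$ and on $J$. Hence $\mathbf{det}^2$ restricts to the trivial representation on all of $H_{3,4}$, and therefore also on its subgroup $H'_{3,4}$. Combining the two observations, for $H$ equal to either $H_{3,4}$ or $H'_{3,4}$ I would obtain
\[ V(a,b,c)\big|_H \cong \bigl(\mathbf{det}^2\big|_H\bigr) \otimes V(a-2,b-2,c-2)\big|_H \cong V(a-2,b-2,c-2)\big|_H, \]
from which the equality of the multiplicities of the trivial representation, i.e. both claimed identities, follows at once.

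There is essentially no genuine obstacle here; the only subtlety is the sign $\det(\pmb\zeta_4) = -1$. This is precisely why the lemma shifts each part by $2$ rather than by $1$: a shift by $1$ would twist by $\mathbf{det}$, which is \emph{not} trivial on the coset $\pmb\zeta_4\U(1)$, whereas the even power $\mathbf{det}^2$ kills this sign and makes the untwisting $H$-equivariant.
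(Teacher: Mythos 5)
Your proof is correct and follows essentially the same route as the paper: identify $V(a,b,c) \cong \mathbf{det}^2 \otimes V(a-2,b-2,c-2)$ and check that $\mathbf{det}^2$ restricts trivially to $H_{3,4}$ (and hence to $H'_{3,4}$) using $\det(A)=1$ on $\U(1)$, $\det(J)=1$, and $\det(\pmb\zeta_4)=-1$. Your closing remark explaining why the shift must be by $2$ rather than $1$ is a nice touch that the paper leaves implicit.
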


\begin{proof}
As before, let $\mathbf{det}$ be the one-dimensional representation of $\U(3)$ defined by the determinant. Then we have \[ V(a,b,c) = \mathbf{det}^2 \otimes V(a-2,b-2,c-2) .\]
Since $\det(A)=1$ for any $A \in \U(1)$,  $\det(J)=1$ and $\det(\pmb \zeta_4)=-1$, the assertion follows.
\end{proof}

By Lemma \ref{lem-sh-z}, it  suffices to consider partitions of the form
$$ (a,b, \epsilon), \qquad \epsilon \in \{0,1 \}. $$
For a partition $(a,b,\epsilon)$ $(\epsilon \in \{0,1\})$, define
$$\Phi(a,b,\epsilon):= \{   (p,q) \in \Z^2 \ | \   a \ge p \ge b \ge q \ge \epsilon  \}.$$
For example,  $\Phi(3,2,1) = \{ (3,2),(2,2),(3,1),(2,1)  \}$. From Proposition~\ref{thm: branching A2 to A1}, we see that
\begin{equation} \label{Phiabe}  \text{the set $\Phi(a,b,\epsilon)$ is exactly the set of $\U(2)$-highest weights in the restriction $V(a,b,\epsilon) |_{\U(2)}$.} \end{equation}

\begin{prop} \label{prop-mu}
For a partition $\lap=(a,b,\epsilon)$ $(\epsilon \in \{0,1\})$, the multiplicity
$ \m_{\lap}(H'_{3,4})  $ is the same as the cardinality of the set
\begin{align*}
\Phi^{(2,4)}(\lap)  & = \{ (p,q) \in \Phi(\lap) \ | \       p+q  \equiv_4 0    \},
\end{align*}
and the multiplicity $ \m_{\lap}(H_{3,4})  $ is the same as the cardinality of the set
$$\Phi^{(4,4)}(\lap)  = \{ (p,q) \in \Phi(\lap) \ | \ p-q  \equiv_4 0, \; p+q \equiv_4 0    \}.$$
\end{prop}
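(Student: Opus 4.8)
The plan is to adapt the argument of Proposition~\ref{proplap} essentially verbatim, the only genuinely new ingredient being the scalar by which $\pmb\zeta_4$ acts. Recall that $\m_{\lap}(H)$ equals the dimension of the $H$-fixed subspace of $V(\lap)$, and that by Lemma~\ref{lem-sh-z} we may assume $\lap=(a,b,\epsilon)$ with $\epsilon\in\{0,1\}$. A vector is fixed by $\U(1)$ exactly when it has weight $0$ with respect to $h_1$; by the multiplicity-free branching recorded in \eqref{Phiabe} these weight-$0$ vectors are indexed, one apiece, by those $(p,q)\in\Phi(a,b,\epsilon)$ with $p\equiv_2 q$, the representative in the $\U(2)$-constituent $V_2(p,q)$ being $v\seteq f_1^{(p-q)/2}v_{(p,q;a,b)}$. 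It therefore remains to record how $\pmb\zeta_4$ and $J$ act on this $v$.

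For the $\pmb\zeta_4$-action I would first treat $\epsilon=0$ by a direct computation. Since $\pmb\zeta_4=\mathrm{diag}(\sqrt{-1},\sqrt{-1},1)$ multiplies $v_1,v_3,w_{12},w_{13}$ by $\sqrt{-1},\,1,\,-1,\,\sqrt{-1}$ respectively, applying it to $v_{(p,q;a,b)}=v_1^{p-b}v_3^{a-p}\otimes w_{12}^qw_{13}^{b-q}$ produces the scalar $(\sqrt{-1})^{\,p+q}$. Moreover $\pmb\zeta_4$ commutes with the embedded $\U(2)$ (hence with $f_1$) and with $J$, so the same scalar $(\sqrt{-1})^{\,p+q}$ is acquired on $v=f_1^{(p-q)/2}v_{(p,q;a,b)}$. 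For $\epsilon=1$ I would invoke $V(a,b,1)=\mathbf{det}\otimes V(a-1,b-1,0)$: under this the $\U(2)$-highest weight $(p,q)$ corresponds to $(p-1,q-1)\in\Phi(a-1,b-1)$, and $\pmb\zeta_4$ picks up the extra factor $\det(\pmb\zeta_4)=-1=(\sqrt{-1})^2$, so the scalar is $(\sqrt{-1})^{(p-1)+(q-1)}\cdot(\sqrt{-1})^2=(\sqrt{-1})^{\,p+q}$ once more. Thus $\pmb\zeta_4$ fixes $v$ precisely when $p+q\equiv_4 0$, uniformly in $\epsilon$.

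Assembling the conditions then finishes the proof. For $H'_{3,4}=\langle\U(1),\pmb\zeta_4\rangle$ a vector is fixed iff it is fixed by both $\U(1)$ and $\pmb\zeta_4$, i.e. $p\equiv_2 q$ and $p+q\equiv_4 0$; as the latter already forces the former, the fixed space has dimension $|\{(p,q)\in\Phi(\lap):p+q\equiv_4 0\}|=|\Phi^{(2,4)}(\lap)|$. For $H_{3,4}$ one further imposes $Jv=v$. The computation in Proposition~\ref{proplap}, using (J1) and (J2) of~\eqref{eq: J condition}, gives $Jv=(-1)^{(p-q)/2}v$, and this carries over unchanged to $\epsilon=1$ because $\det(J)=1$; hence $J$ fixes $v$ iff $p-q\equiv_4 0$. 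Combining $p+q\equiv_4 0$ with $p-q\equiv_4 0$ yields $|\Phi^{(4,4)}(\lap)|$, as claimed.

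The step requiring the most care is the uniform evaluation of the $\pmb\zeta_4$-scalar as $(\sqrt{-1})^{\,p+q}$ across both values of $\epsilon$, together with verifying that $\pmb\zeta_4$ commutes with $\U(2)$ and with $J$ so that the three fixing conditions may be imposed simultaneously on the single weight-$0$ vector $v$; once this is in hand the counting is immediate, and the simplification $p+q\equiv_4 0\Rightarrow p\equiv_2 q$ removes the need to track the $\U(1)$-parity condition separately.
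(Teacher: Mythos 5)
Your proposal is correct and follows essentially the same route as the paper: identify the $\U(1)$-fixed vectors $f_1^{(p-q)/2}v_{(p,q;a,b)}$ indexed by $\Phi(\lap)$, show $\pmb\zeta_4$ acts on each by $(\sqrt{-1})^{\,p+q}$, and reuse the $J$-eigenvalue $(-1)^{(p-q)/2}$ from Proposition~\ref{proplap}. If anything, your treatment is slightly more careful than the paper's, which computes the $\pmb\zeta_4$-scalar by counting $v_1$- and $v_2$-factors in the expansion of $f_1^k v_{(p,q;a,b)}$ and leaves the $\epsilon=1$ case implicit, whereas you handle it explicitly via the twist $V(a,b,1)=\mathbf{det}\otimes V(a-1,b-1,0)$ and the commutation of $\pmb\zeta_4$ with $f_1$.
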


\begin{proof}
Let $v$ be a weight vector fixed by $H'_{3,4}$. Then, in particular, $v$ is fixed by $\U(1)$, and we may write $v= f_1^k v_{(p,q;a,b)}$ for $p-q = 2k$ as in the proof of Proposition \ref{proplap}.
Since
\[ v=f_1^k ( v_1^{p-b}v_3^{a-p} \otimes w_{12}^q w_{13}^{b-q})= C v_1^{p-b-k} v_2^k v_3^{a- p} \otimes w_{12}^q w_{13}^{b-q}+ ( \text{other terms with the same weight} ),
\] for some constant $C$, the number of $v_1$ factors and the number of $v_2$ factors are the same, which is equal to $k+q= (p+q)/2$, and we have
\[ \pmb \zeta_4 v= (\sqrt{-1})^{(p+q)} v=v .\]
Thus we have $p+q \equiv_4 0$, which implies $p-q\equiv_2 0
$.

If $v$ is also fixed by $J$, the we obtain an additional condition $p-q \equiv_4 0$ as in (the proof of) Proposition \ref{proplap}.
\end{proof}

The cardinalities of the sets $\Phi^{(2,4)}(\lap)$ and $\Phi^{(4,4)}(\lap)$ will be computed in the next section, and we present the resulting formulas in the following two propositions.

\begin{prop} \label{prop-ca-1}
For a partition $\lap=(a,b,\epsilon)$ $(\epsilon \in \{0,1\})$,
write $z \seteq a-b$ and $b' \seteq b-\epsilon$. Then we have
\begin{align}\label{eq: 24 closed}
| \Phi^{(2,4)}(\lap) |  = \dfrac{(z+1)(b'+1)+\kappa_\epsilon(z,b')}{4},
\end{align}
where
$\kappa_\epsilon(z, b' )$ are defined on the congruence classes of $z$ and $b'$ modulo $4$  by
\begin{center}{ $\kappa_0(z,b) =$ {\scriptsize
\begin{tabular}{|c||c|c|c|c|}
\hline
$z\backslash b$ & 0&1&2&3\\ \hline \hline
0&$3$ & $-2$&$1$& $0$ \\ \hline
1&$2$&$-4$ & $2$ & $0$ \\ \hline
2&$1$&$-2$ & $3$ & $0$ \\ \hline
3&$0$&$0$ & $0$ & $0$ \\ \hline
\end{tabular}} \qquad  and  \qquad $\kappa_1(z,b') =$
 {\scriptsize \begin{tabular}{|c||c|c|c|c|}
\hline
$z\backslash b'$ & 0&1&2&3\\ \hline \hline
0&$-1$ & $2$&$1$& $0$ \\ \hline
1&$-2$&$4$ & $-2$ & $0$ \\ \hline
2&$1$&$2$ & $-1$ & $0$ \\ \hline
3&$0$&$0$ & $0$ & $0$ \\ \hline
\end{tabular}}}.
 \end{center}

\end{prop}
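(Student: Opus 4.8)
The plan is to reduce the computation of $|\Phi^{(2,4)}(\lap)|$ to a lattice-point count and then separate a main term from a bounded correction. Recall from \eqref{Phiabe} that $\Phi(\lap)$ consists of the pairs $(p,q)$ with $b \le p \le a$ and $\epsilon \le q \le b$. Writing $p = b+i$ with $0 \le i \le z$ and $q = \epsilon + j$ with $0 \le j \le b'$ exhibits $\Phi(\lap)$ as a $(z+1)\times(b'+1)$ rectangular array, exactly as in \eqref{eq: array and dimension}. Under this parametrization one has $p+q = b+\epsilon+i+j = b'+2\epsilon+i+j$, so the defining condition $p+q \equiv_4 0$ becomes $i+j \equiv_4 -(b'+2\epsilon)$. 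Thus counting $\Phi^{(2,4)}(\lap)$ is precisely the problem of counting lattice points lying on a prescribed family of anti-diagonals (modulo $4$) inside a rectangle.

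To carry out the count cleanly I would apply a roots-of-unity filter with $\omega = \sqrt{-1}$, using that $\tfrac14\sum_{t=0}^{3}\omega^{tn}$ equals $1$ if $n\equiv_4 0$ and $0$ otherwise:
$$|\Phi^{(2,4)}(\lap)| = \frac14\sum_{t=0}^{3}\Big(\sum_{p=b}^{a}\omega^{tp}\Big)\Big(\sum_{q=\epsilon}^{b}\omega^{tq}\Big).$$
The term $t=0$ contributes the main term $\tfrac14(z+1)(b'+1)$. For $t=1,2,3$ each inner sum is a finite geometric series whose value depends only on the endpoints modulo $4$, i.e. only on $z \bmod 4$, $b \bmod 4$ and $\epsilon$. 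Collecting these three terms defines the correction $\kappa_\epsilon(z,b')=\sum_{t=1}^{3}\big(\sum_{p}\omega^{tp}\big)\big(\sum_{q}\omega^{tq}\big)$, which is automatically a real integer because both $4|\Phi^{(2,4)}(\lap)|$ and $(z+1)(b'+1)$ are integers. This already forces the stated shape $|\Phi^{(2,4)}(\lap)| = \tfrac14\big((z+1)(b'+1)+\kappa_\epsilon(z,b')\big)$.

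It then remains to identify $\kappa_\epsilon$ with the two displayed tables. Since each geometric sum depends only on residues, I would reduce $z$ and $b'$ to their classes modulo $4$ and treat $\epsilon=0$ and $\epsilon=1$ separately; the offset $\epsilon$ enters only through the lower limit of the $q$-sum and through the relation $b=b'+\epsilon$. A more combinatorial route, closer to the spirit of Section~\ref{sec: Cardinality}, is to group the $z+1$ rows and $b'+1$ columns into blocks of four: a complete $4\times 4$ block of the array contributes exactly four solutions of $p+q\equiv_4 0$, so only the leftover partial rows and columns (governed by $z \bmod 4$ and $b' \bmod 4$) contribute to $\kappa_\epsilon$, and these can be counted by direct inspection against the target anti-diagonal residue $-(b'+2\epsilon)$.

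The main obstacle is the bookkeeping of the correction term rather than any conceptual difficulty: there are $4\times 4\times 2 = 32$ residue cases, and the dependence on $\epsilon$ is delicate because $\epsilon$ simultaneously shifts the target residue in $i+j\equiv_4 -(b'+2\epsilon)$ and links $b$ to $b'$. The care needed is to keep the two tables $\kappa_0$ and $\kappa_1$ mutually consistent and to count the leftover partial blocks against the correct anti-diagonal class. Once the twelve geometric sums (three values of $t$ for each of the two inner sums) or, equivalently, the leftover block contributions are tabulated, matching them to the displayed entries of $\kappa_0$ and $\kappa_1$ is routine.
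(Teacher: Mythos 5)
Your proposal is correct, but it takes a genuinely different route from the paper. The paper proves the formula in two steps: a periodicity lemma (Lemma~\ref{lem: z+1}) showing that replacing $(a,b,\epsilon)$ by $(a+4,b+4,\epsilon)$ increases $|\Phi^{(2,4)}|$ by exactly $z+1$ --- obtained by splitting the $(z+1)\times(b'+5)$ array into a translated copy of the old array plus a four-column strip in which every row contributes exactly one solution of $p+q\equiv_4 0$ --- followed by a direct inspection of the ``base'' partitions with $\mathsf{b}'\in\{0,1,2,3\}$ (Lemma~\ref{lem: kappa}), which pins down $\kappa_\epsilon$. Your roots-of-unity filter collapses both steps into one: the identity $|\Phi^{(2,4)}(\lambda')|=\tfrac14\sum_{t=0}^{3}\bigl(\sum_{p=b}^{a}(\sqrt{-1})^{tp}\bigr)\bigl(\sum_{q=\epsilon}^{b}(\sqrt{-1})^{tq}\bigr)$ simultaneously produces the main term $(z+1)(b'+1)/4$ and shows, with no separate argument, that the correction is an integer depending only on $z\bmod 4$, $b\bmod 4$ and $\epsilon$; the table is then verified by evaluating twelve short geometric sums rather than by inspecting arrays case by case. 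What the filter buys is a structural explanation of why the answer must have the shape ``main term plus a mod-$4$-periodic correction,'' and a mechanical route to the entries; what the paper's decomposition buys is reusability --- the same array-splitting lemma is the engine behind the harder count $|\Phi^{(4,4)}|$ in Proposition~\ref{prop-ca-2}, where the correction is no longer bounded but grows linearly in $z$ and $b'$ (note your method would still apply there with a double filter over the two congruences $p+q\equiv_4 0$ and $p-q\equiv_4 0$). Both arguments terminate in a finite residue-class verification of comparable size, so neither has an advantage on that score.
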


\begin{prop} \label{prop-ca-2}
For a partition $\lap=(a,b,\epsilon)$ $(\epsilon \in \{0,1\})$,
write $z \seteq a-b$ and $b'\seteq b-\epsilon$. Then we have
\begin{align}\label{eq: 44 closed}
| \Phi^{(4,4)} (\lap)|  =
\dfrac{b'z+\eta_\epsilon(z,b')\cdot (b',z)+\xi_\epsilon(z, b')}{8},
\end{align}
where $(x,y)\cdot (b',z)= xb'+yz$, and  $\eta_\epsilon(z,b')$ and $\xi_\epsilon(z,b')$ are defined on the congruence classes of $z$ and $b'$ by
\begin{center} {$\eta_0(z,b)=$
{\scriptsize \begin{tabular}{|c||c|c|}
\hline
$z\backslash b$ & 0&1\\ \hline \hline
0& $(2,2)$ &$(0,1)$   \\ \hline
1 & $(1,2)$ & $(1,1)$ \\ \hline
\end{tabular} }, \qquad  \phantom{and}  \qquad
  $\phantom{L}\eta_1(z,b')=$
{\scriptsize \begin{tabular}{|c||c|c|}
\hline
$z\backslash b'$ & 0&1\\ \hline \hline
0& $(0,0)$ &$(2,1)$   \\ \hline
1 & $(1,0)$ & $(1,1)$ \\ \hline
\end{tabular} },
}
 \end{center}
\begin{center} {\phantom{L} $\xi_0(z,b)=$
{\scriptsize \begin{tabular}{|c||c|c|c|c|}
\hline
$z\backslash b$ & 0&1&2&3\\ \hline \hline
0& $8$ & $0$ & $4$  &$0$ \\ \hline
1 & $6$ & $-3$ & $2$ &$1$\\ \hline
2 & $4$ & $-4$ & $4$ &$0$ \\ \hline
3 & $2$ & $1$ & $2$ &$1$\\ \hline
\end{tabular}}   \qquad  and  \qquad
$ \xi_1(z,b')=$
{\scriptsize \begin{tabular}{|c||c|c|c|c|}
\hline
$z\backslash b'$ & 0&1&2&3\\ \hline \hline
0& $0$ & $6$ & $0$  & $2$ \\ \hline
1& $0$ & $5$ & $-4$  & $1$ \\ \hline
2& $0$ & $2$ & $-4$  & $2$ \\ \hline
3& $0$ & $1$ & $0$  & $1$ \\ \hline
\end{tabular} }
}.
\end{center}
\end{prop}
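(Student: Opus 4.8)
The plan is to turn the two congruence conditions defining $\Phi^{(4,4)}(\lap)$ into a product of one–dimensional counts and then resolve the resulting arithmetic by residues modulo $4$. By the reduction in Lemma~\ref{lem-sh-z} it suffices to treat $\lap=(a,b,\epsilon)$ with $\epsilon\in\{0,1\}$, and for such $\lap$ the set $\Phi(\lap)$ is exactly the rectangular array of pairs $(p,q)$ with $p\in\{b,b+1,\dots,a\}$ and $q\in\{\epsilon,\epsilon+1,\dots,b\}$, in which $p$ and $q$ range \emph{independently} over $z+1$ and $b'+1$ consecutive integers respectively. The crucial observation is that the defining conditions decouple: $p-q\equiv_4 0$ together with $p+q\equiv_4 0$ force $2p\equiv_4 0$ and $2q\equiv_4 0$, hence both $p$ and $q$ even with $p\equiv_4 q$, and conversely these recover the two original congruences. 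Thus $(p,q)\in\Phi^{(4,4)}(\lap)$ is equivalent to $(p\bmod 4,\,q\bmod 4)\in\{(0,0),(2,2)\}$.

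Since $p$ and $q$ vary independently, this equivalence gives the factorization
\[
|\Phi^{(4,4)}(\lap)| = P_0 Q_0 + P_2 Q_2,
\]
where $P_j\seteq |\{p: b\le p\le a,\ p\equiv_4 j\}|$ and $Q_j\seteq |\{q:\epsilon\le q\le b,\ q\equiv_4 j\}|$ for $j\in\{0,2\}$. This factorization is the heart of the argument, and one can verify it on small cases such as $\Phi(3,2,1)$ and $\Phi(4,2,0)$, where it returns the value $1$ and $2$ respectively, matching the tabulated formula.

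Next I would write each one–dimensional count in the form ``leading term plus bounded periodic correction'': for a block of $z+1$ consecutive integers one has $P_j=(z+1)/4+\rho^P_j$, and likewise $Q_j=(b'+1)/4+\rho^Q_j$, where the corrections are the usual floor–function counts of a residue class in an interval and depend only on the endpoint residues (so $\rho^P_j$ on $z,b',\epsilon$ and $\rho^Q_j$ on $b',\epsilon$) modulo $4$. Substituting into $P_0Q_0+P_2Q_2$ produces the main term $2\cdot\tfrac{(z+1)(b'+1)}{16}=\tfrac{(z+1)(b'+1)}{8}$, which supplies the $b'z/8$ of the statement together with part of the linear and constant contributions, plus the cross terms $\tfrac{z+1}{4}(\rho^Q_0+\rho^Q_2)+\tfrac{b'+1}{4}(\rho^P_0+\rho^P_2)$ that are linear in one variable, plus the purely periodic term $\rho^P_0\rho^Q_0+\rho^P_2\rho^Q_2$. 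Collecting these over the common denominator $8$ yields the linear part $\eta_\epsilon(z,b')\cdot(b',z)$ and the constant part $\xi_\epsilon(z,b')$; the roots–of–unity filter with $\sqrt{-1}$ gives the same main term $\tfrac{1}{16}\cdot 2\cdot(z+1)(b'+1)$ and makes the boundedness of the remaining $14$ contributions transparent.

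The remaining, and by far the most laborious, step is the bookkeeping: evaluating $\rho^P_j,\rho^Q_j$ in each residue class and assembling the three pieces above into the tabulated values of $\eta_\epsilon$ and $\xi_\epsilon$. I expect this to be the main obstacle — not because any single case is hard, but because one must run through $\epsilon\in\{0,1\}$ against the residues of $z$ and $b'$ modulo $4$, and must confirm that the linear coefficients collapse to dependence on $z,b'$ modulo $2$ only (so that $\eta_\epsilon$ is a $2\times2$ table) while the constant term genuinely requires the full modulus $4$ (so that $\xi_\epsilon$ is a $4\times4$ table). A reliable cross–check I would run throughout is Proposition~\ref{prop-ca-1}: since $\Phi^{(4,4)}(\lap)\subseteq\Phi^{(2,4)}(\lap)$ and the residue pairs summing to $0$ modulo $4$ are exactly $(0,0),(2,2),(1,3),(3,1)$, one has $|\Phi^{(2,4)}(\lap)|-|\Phi^{(4,4)}(\lap)|=P_1Q_3+P_3Q_1$, which must agree with the difference of the two closed forms in every residue class and thereby verifies the entries independently.
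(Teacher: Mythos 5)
Your proposal is correct, and it takes a genuinely different route from the paper. The paper proves this proposition by a recursion in $b$ with step $4$: Lemma~\ref{lem: eta} computes the increment $\phi^{(4,4)}(a+4,b+4,\epsilon)-\phi^{(4,4)}(a,b,\epsilon)$ by splitting the array $\Phi(a+4,b+4,\epsilon)$ into a translate of $\Phi(a,b,\epsilon)$ plus a block of four columns, and Lemma~\ref{lem: xi} evaluates the base cases $\nu'(z,\mathsf{b}',\epsilon)$ with $\mathsf{b}'\in\{0,1,2,3\}$; the closed form then follows by summing $k$ copies of the increment onto the base value. You instead exploit the fact that $\Phi(\lap)$ is a full rectangle in which $p$ and $q$ range independently, and that the two conditions $p+q\equiv_4 0$, $p-q\equiv_4 0$ are jointly equivalent to $(p\bmod 4,q\bmod 4)\in\{(0,0),(2,2)\}$ --- a disjoint union of product conditions --- so the count factors as $P_0Q_0+P_2Q_2$ with $P_j,Q_j$ elementary residue counts in intervals. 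This is a valid and arguably cleaner derivation: it makes the shape of the answer (quadratic main term, linear coefficients depending only on parities, constant depending on residues mod $4$) transparent from the expansion $P_j=\tfrac{z+1}{4}+\rho^P_j$, $Q_j=\tfrac{b'+1}{4}+\rho^Q_j$, and it delivers Proposition~\ref{prop-ca-1} for free via $P_0Q_0+P_1Q_3+P_2Q_2+P_3Q_1$ together with the consistency check $|\Phi^{(2,4)}|-|\Phi^{(4,4)}|=P_1Q_3+P_3Q_1$. What the paper's recursive decomposition buys instead is a structural explanation of why the formula is affine in the number $k$ of $4$-strips. The one thing your write-up leaves undone is the same thing the paper also only sketches: the finite verification that the assembled corrections reproduce the tabulated $\eta_\epsilon$ and $\xi_\epsilon$ in every residue class; since your two spot checks and the structure of the expansion are consistent with the tables, this is bookkeeping rather than a gap in the argument.
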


\begin{cor} \label{cor-fi}
For each partition $\lap=(a,b,\epsilon)$ $(\epsilon \in \{0,1\})$, write $z \seteq a-b$ and $b'\seteq b-\epsilon$. Then we have
$$\omega_\epsilon(z,b') \seteq 2|\Phi^{(4,4)}(\lap)|   -|\Phi^{(2,4)}(\lap)|
 = \dfrac{\alpha_\epsilon (z,b') \cdot (b',z)+\beta_\epsilon(z,b')}{4},$$
where $(x,y)\cdot (b',z)= xb'+yz$, and  $\alpha_\epsilon(z,b')$ and $\beta_\epsilon(z,b')$ are defined on the congruence classes of $z$ and $b'$ by
\begin{center} {\phantom{Ll} $\alpha_0(z,b)=$
{\scriptsize \begin{tabular}{|c||c|c|}
\hline
$z\backslash b$ & 0&1\\ \hline \hline
0& $(1,1)$ &$(-1,0)$   \\ \hline
1 & $(0,1)$ & $(0,0)$ \\ \hline
\end{tabular}}, \qquad \phantom{and}
\qquad $\phantom{LL} \alpha_1(z,b') =$
{\scriptsize \begin{tabular}{|c||c|c|}
\hline
$z\backslash b'$ & 0&1\\ \hline \hline
0& $(-1,-1)$ &$(1,0)$   \\ \hline
1 & $(0,-1)$ & $(0,0)$ \\ \hline
\end{tabular}},
}
\end{center}
\begin{center} {\phantom{LL}  $ \beta_0(z,b) =$
{\scriptsize \begin{tabular}{|c||c|c|c|c|}
\hline
$z\backslash b$ & 0&1&2&3\\ \hline \hline
0& $4$ & $1$ & $2$  &$-1$ \\ \hline
1& $3$ & $0$ & $-1$  &$0$ \\ \hline
2& $2$ & $-3$ & $0$  &$-1$ \\ \hline
3& $1$ & $0$ & $1$  &$0$ \\ \hline
\end{tabular}}
 \qquad and  \qquad $\beta_1(z,b')=$
{\scriptsize \begin{tabular}{|c||c|c|c|c|}
\hline
$z\backslash b'$ & 0&1&2&3\\ \hline \hline
0& $0$ & $3$ & $-2$  &$1$ \\ \hline
1& $1$ & $0$ & $-3$  &$0$ \\ \hline
2& $-2$ & $-1$ & $-4$  &$1$ \\ \hline
3& $-1$ & $0$ & $-1$  &$0$ \\ \hline
\end{tabular}}.
}
 \end{center}
\end{cor}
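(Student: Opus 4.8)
The plan is to derive Corollary~\ref{cor-fi} purely algebraically from Proposition~\ref{prop-ca-1} and Proposition~\ref{prop-ca-2}, which already supply closed-form expressions for $|\Phi^{(2,4)}(\lap)|$ and $|\Phi^{(4,4)}(\lap)|$. The genuine combinatorial content has been relegated to those two propositions (and to Section~\ref{sec: Cardinality}), so the corollary itself is a formal consequence of combining them.

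First I would substitute the two formulas into the definition $\omega_\epsilon(z,b') = 2|\Phi^{(4,4)}(\lap)| - |\Phi^{(2,4)}(\lap)|$. Doubling the expression in~\eqref{eq: 44 closed} turns its denominator $8$ into $4$, matching the denominator $4$ in~\eqref{eq: 24 closed}, so everything can be placed over a common denominator $4$. The numerator then reads
$$\bigl(b'z + \eta_\epsilon(z,b')\cdot(b',z) + \xi_\epsilon(z,b')\bigr) - \bigl((z+1)(b'+1) + \kappa_\epsilon(z,b')\bigr).$$

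Next I would expand $(z+1)(b'+1) = b'z + z + b' + 1$ and note the crucial cancellation of the bilinear term $b'z$ between the two propositions. What survives is a linear-plus-constant expression, namely $\eta_\epsilon(z,b')\cdot(b',z) - z - b' - 1 + \xi_\epsilon(z,b') - \kappa_\epsilon(z,b')$. Writing $-z - b' = (-1,-1)\cdot(b',z)$ and separating the coefficient of $(b',z)$ from the constant term, I would read off the two defining identities
$$\alpha_\epsilon(z,b') = \eta_\epsilon(z,b') + (-1,-1), \qquad \beta_\epsilon(z,b') = \xi_\epsilon(z,b') - \kappa_\epsilon(z,b') - 1.$$
Here $\alpha_\epsilon$ inherits its dependence on $z,b' \bmod 2$ from $\eta_\epsilon$, while $\beta_\epsilon$ depends on $z,b' \bmod 4$ like $\xi_\epsilon$ and $\kappa_\epsilon$; the shift by the constant pair $(-1,-1)$ respects the mod-$2$ structure, so both reduced tables remain well defined, and integrality of $\omega_\epsilon$ is automatic since it equals the integer $2|\Phi^{(4,4)}(\lap)| - |\Phi^{(2,4)}(\lap)|$.

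Finally I would confirm the stated tables by verifying $\alpha_\epsilon = \eta_\epsilon + (-1,-1)$ entrywise on the four residue pairs for each $\epsilon \in \{0,1\}$, and $\beta_\epsilon = \xi_\epsilon - \kappa_\epsilon - 1$ on the sixteen residue pairs for each $\epsilon$; this is a finite, purely arithmetic check against the tables in Proposition~\ref{prop-ca-1} and Proposition~\ref{prop-ca-2}. I do not expect any obstacle in the corollary itself: it is essentially bookkeeping. The only real difficulty lies upstream, in establishing the cardinality counts of the two cited propositions, which this corollary takes as given.
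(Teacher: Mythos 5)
Your proposal is correct and is exactly the (implicit) argument the paper intends: the corollary is stated without proof precisely because it is the routine algebraic combination $2\cdot\eqref{eq: 44 closed}-\eqref{eq: 24 closed}$, in which the bilinear term $b'z$ cancels and one reads off $\alpha_\epsilon=\eta_\epsilon+(-1,-1)$ and $\beta_\epsilon=\xi_\epsilon-\kappa_\epsilon-1$, which I have checked entrywise against the stated tables. No further comment is needed.
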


Now we state and prove the main result of this subsection.

\begin{theorem} \label{thm :g34} For a partition $\lap=(a+2k,b+2k,\epsilon+2k)$ $(k \in \mathbb Z_{\ge 0}, \epsilon \in \{0,1\})$, write $z \seteq a-b$ and $b'\seteq b-\epsilon$. Then  we have
\[  \m_{\lap}(H'_{3,4})=\dfrac{(z+1)(b'+1)+\kappa_\epsilon(z,b')}{4} \quad \text{ and } \quad \mathfrak m_{\lap}(H_{3,4})=  \dfrac{b'z+\eta_\epsilon(z,b')\cdot (b',z)+\xi_\epsilon(z,b')}{8} .\]
Furthermore, for any $m \in \Z_{\ge1}$, we have
\begin{align}\label{eq: g34 general}
 \frac 1 2 \left( \prod_{i=1}^m (1+x_i)(1+x_i^2)+  \prod_{i=1}^m (1+x_i)(1-x_i^2) \right)  =\sum_{\la \trianglelefteq (3^m)} \omega_\epsilon(z,b') \,  S^{\U(m)}_{\la}(\mathbf x),
\end{align}
where $z,b',\epsilon$ are determined by the transpose $\lap$ of $\la \trianglelefteq (3^m)$.
\end{theorem}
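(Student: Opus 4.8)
The plan is to mirror the two-part structure of the proof of Theorem~\ref{thm :g3}: first read off the multiplicity formulas from the counting propositions, then assemble the Schur-function identity~\eqref{eq: g34 general} from the coset decomposition. For the multiplicities, the work is essentially already done. By Proposition~\ref{prop-mu} one has $\m_{\lap}(H'_{3,4}) = |\Phi^{(2,4)}(\lap)|$ and $\m_{\lap}(H_{3,4}) = |\Phi^{(4,4)}(\lap)|$, while Lemma~\ref{lem-sh-z} lets me reduce to $\lap = (a,b,\epsilon)$ with $\epsilon \in \{0,1\}$: tensoring by $\mathbf{det}^2$ shifts $\lap \mapsto (a+2k,b+2k,\epsilon+2k)$ and, since $\det(\pmb\zeta_4)^2 = 1$ and $\det J = 1$, leaves both restrictions to $H_{3,4}$ and $H'_{3,4}$ unchanged. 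Substituting the closed forms of Proposition~\ref{prop-ca-1} and Proposition~\ref{prop-ca-2} for these cardinalities then reproduces the two displayed expressions verbatim.

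For the identity, I would integrate $\Delta(\gamma) := \prod_{i=1}^m \det(I+x_i\gamma)$ over $H_{3,4}$ using~\eqref{Hp34} and~\eqref{H34}. Writing $H_{3,4} = H'_{3,4} \sqcup JH'_{3,4}$ and noting that $J$ and $\pmb\zeta_4$ commute, so that $JH'_{3,4} = J\U(1) \sqcup \pmb\zeta_4 J\U(1)$ is exactly the pair of underlined cosets, the crucial input is~\eqref{eqn-Jz}: the quantity $\Delta$ is \emph{constant} on each of these, equal to $\prod_i(1+x_i)(1+x_i^2)$ on $J\U(1)$ and $\prod_i(1+x_i)(1-x_i^2)$ on $\pmb\zeta_4 J\U(1)$. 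With the index-two weighting of the probability Haar measure this yields
\begin{align*}
\int_{H_{3,4}} \Delta\, d\gamma = \tfrac12 \int_{H'_{3,4}} \Delta\, d\gamma' + \tfrac14\prod_{i=1}^m(1+x_i)(1+x_i^2) + \tfrac14\prod_{i=1}^m(1+x_i)(1-x_i^2).
\end{align*}
Applying Proposition~\ref{prop-main} to the two integrals rewrites them as $\sum_\la \m_{\lap}(H_{3,4})S^{\U(m)}_\la$ and $\sum_\la \m_{\lap}(H'_{3,4})S^{\U(m)}_\la$; solving for the averaged product and invoking the linear independence of Schur functions then shows that the coefficient of $S^{\U(m)}_\la$ in~\eqref{eq: g34 general} equals $2\m_{\lap}(H_{3,4}) - \m_{\lap}(H'_{3,4})$, which is precisely $\omega_\epsilon(z,b')$ by Corollary~\ref{cor-fi} (with $z,b',\epsilon$ read off from the transpose $\lap$).

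The genuinely hard work lies not in this theorem but in the deferred lattice-point counts of Proposition~\ref{prop-ca-1} and Proposition~\ref{prop-ca-2} (carried out in Section~\ref{sec: Cardinality}), where one counts pairs $(p,q)$ in the array~\eqref{eq: array and dimension} subject to $p+q \equiv_4 0$, and additionally to $p-q \equiv_4 0$, and packages the answer into the residue-class tables for $\kappa_\epsilon$, $\eta_\epsilon$ and $\xi_\epsilon$. Within the present argument the only delicate bookkeeping is the measure weighting across the four cosets and the verification, via $J\pmb\zeta_4 = \pmb\zeta_4 J$, that $JH'_{3,4}$ is exactly the union of the two constant cosets; once the combination $2\m_{\lap}(H_{3,4}) - \m_{\lap}(H'_{3,4})$ is identified with $\omega_\epsilon(z,b')$ through Corollary~\ref{cor-fi}, the identity follows formally.
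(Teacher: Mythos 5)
Your proposal is correct and follows essentially the same route as the paper: the multiplicity formulas are read off from Proposition~\ref{prop-mu}, Lemma~\ref{lem-sh-z} and Propositions~\ref{prop-ca-1}--\ref{prop-ca-2}, and the identity~\eqref{eq: g34 general} is obtained from the coset decompositions \eqref{H34}--\eqref{Hp34}, the constancy of the characteristic polynomial on $J\U(1)$ and $\pmb\zeta_4 J\U(1)$ from \eqref{eqn-Jz}, Proposition~\ref{prop-main}, and Corollary~\ref{cor-fi}. The extra remarks you include (commutativity of $J$ and $\pmb\zeta_4$, linear independence of Schur functions) are correct and only make explicit what the paper leaves implicit.
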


\begin{proof}
The first assertion follows from Propositions \ref{prop-mu}, \ref{prop-ca-1}, \ref{prop-ca-2} and Lemma \ref{lem-sh-z}.
Let $d\gamma, d \gamma_1, d\gamma_2$ be the probability Haar measures on $H_{3,4}, H'_{3,4} , \U(1)\le \U(3)$, respectively.  We use  the coset decompositions \eqref{H34} and \eqref{Hp34} and the computation \eqref{eqn-Jz} and Proposition \ref{prop-main} to obtain
\begin{align*}
 &\sum_{\la \trianglelefteq (3^m)}\m_{\lap}(H_{3,4})  S^{\U(m)}_{\la}(\mathbf x)=\int_{H_{3,4}} \prod_{i=1}^m  \det(I+x_i\gamma) d\gamma \\ & = \dfrac{1}{2} \int_{H'_{3,4}} \prod_{i=1}^m  \det(I+x_i\gamma) d\gamma_1+ \frac 1 4 \int_{J\U(1)}  \prod_{i=1}^m  \det(I+x_i\gamma) d\gamma_2
+  \dfrac{1}{4}  \int_{\pmb \zeta_4 J\U(1)} \prod_{i=1}^m \det(I+x_i \gamma) d\gamma_2 \allowdisplaybreaks\\
& = \dfrac{1}{2}  \sum_{\la \trianglelefteq (3^m)}\m_{\lap}(H'_{3,4})  S^{\U(m)}_{\la}(\mathbf x)  +  \dfrac{1}{4}   \prod_{i=1}^m (1+x_i)(1+x_i^2)+  \dfrac{1}{4}   \prod_{i=1}^m (1+x_i)(1-x_i^2).
\end{align*}

Hence, using Corollary \ref{cor-fi},
\begin{align*}
\frac 1 2 \left( \prod_{i=1}^m (1+x_i)(1+x_i^2) +  \prod_{i=1}^m (1+x_i)(1-x_i^2) \right) &=\sum_{\la \trianglelefteq (3^m)}\left ( 2 \m_{\lap}(H_{3,4}) - \m_{\lap}(H'_{3,4}) \right )  S^{\U(m)}_{\la}(\mathbf x)  \\&=\sum_{\la \trianglelefteq (3^m)} \omega_\epsilon(z,b') S^{\U(m)}_{\la}(\mathbf x). \qedhere
\end{align*}
\end{proof}

\begin{remark} \label{rmk: g34} (1) The left hand side of~\eqref{eq: g34 general} can be written as a simple combination of the monomial symmetric functions in $m$-variables:
$$ \frac 1 2 \left( \prod_{i=1}^m (1+x_i)(1+x_i^2)+  \prod_{i=1}^m (1+x_i)(1-x_i^2) \right)  =\sum_{  \substack{ \la' \trianglelefteq (m^3)  \\ \la'_2 \equiv_2 0  } } \mathsf{m}^{(m)}_{\la}(\mathbf x).$$
where $\la' = (\la'_1,\la'_2,\la'_3)$.

\noindent
(2) By replacing $x_i$ by $-x_i$ in \eqref{eq: g34 general}, we obtain
\begin{align*}
\frac 1 2 \left( \prod_{i=1}^m (1-x_i)(1+x_i^2) +  \prod_{i=1}^m (1-x_i)(1-x_i^2) \right) &=\sum_{\la \trianglelefteq (3^m)} (-1)^{|\la|} \omega_\epsilon(z,b') S^{\U(m)}_{\la}(\mathbf x).
\end{align*}

\end{remark}

\begin{example} \label{ex: g34 huge}
(1) Let us see the case $m = 7$ in Theorem~\ref{thm :g34}. Then we have
 \begin{align}\label{eq: sl34 ex7}
 \frac 1 2  \left( \prod_{i=1}^7 (1+x_i)(1+x_i^2)+  \prod_{i=1}^7 (1+x_i)(1-x_i^2) \right)  =\sum_{\la \trianglelefteq (3^7)} \omega_\epsilon(z,b') \,  S^{\U(7)}_{\la}(\mathbf x)
\end{align}

The function $S^{\U(7)}_{(3,2^2, 1)}(\mathbf x)$ appears in the right hand side of~\eqref{eq: sl34 ex7} with the coefficient $-2$ as $\la'=(4,3,1)$ and hence $z=1$ and $b'=2$.
As a polynomial itself, $S^{\U(7)}_{(3^2,2^2, 1^{1})}(\mathbf x)$ contains $1239$ monomial terms and $S^{\U(7)}_{(3,2^2, 1^{1})}(\mathbf 1)= 3402$.
 There are $36$ Schur functions with negative coefficients in the right hand side of~\eqref{eq: sl34 ex7} including $S^{\U(7)}_{(3,2^2,1)}(\mathbf x)$. However, we obtain
a polynomial of $8192$ monomial terms with all coefficient $1$ in the left hand side of~\eqref{eq: sl34 ex7}.

(2) Let us see  the case $m = 20$ in Theorem~\ref{thm :g34}:
 \begin{align}\label{eq: sl34 ex20}
 \frac 1 2 \left( \prod_{i=1}^{20} (1+x_i)(1+x_i^2)+   \prod_{i=1}^{20} (1+x_i)(1-x_i^2)  \right)=\sum_{\la \trianglelefteq (3^m)} \omega_\epsilon(z,b') \,  S^{\U(20)}_{\la}(\mathbf x)
\end{align}
Here $S^{\U(20)}_{(2^9, 1^{2})}(\mathbf x)$ appears in the right
hand side of~\eqref{eq: sl34 ex20} with the coefficient $-3$ as
$\la'=(11,9)$ and hence $z=2$ and $b'\equiv_4 1$. One can check that
$S^{\U(20)}_{(2^9, 1^{2})}(\mathbf 1)= 12342120700$. There are $590$
Schur functions with negative coefficients in the right hand side
of~\eqref{eq: sl34 ex20}, and the specialization of the right hand
side of~\eqref{eq: sl34 ex20} at $\mathbf 1$ is equal to $2^{39}$.
This shows some systematic, even miraculous, cancelations of
monomial terms in Schur functions involved in this example.
\end{example}

Combining Theorems \ref{thm :g3} and \ref{thm :g34}, we obtain the following identity.
\begin{cor}  \label{cor:fep}
For each partition $\lap=(a+2k,b+2k,\epsilon+2k)$ $(k \in \mathbb Z_{\ge 0}, \epsilon \in \{0,1\})$, write $z \seteq a-b$ and $b'\seteq b-\epsilon$.
Then we have
\begin{align*}
\prod_{i=1}^m (1+x_i)(1-x_i^2) &=
\sum_{  (\la'_1, \la'_2, \la'_3) \trianglelefteq (m^3)  } (-1)^{\delta(\la'_2 \equiv_2 1)}\mathsf{m}^{(m)}_{\la}(\mathbf x)
 =  \sum_{\la \trianglelefteq (3^m)} \widetilde{\omega}_\epsilon(z,b') \,  S^{\U(m)}_{\la}(\mathbf x),
\end{align*}
where
$$\widetilde{\omega}_\epsilon(z,b') \seteq  \dfrac{\alpha_\epsilon (z,b') \cdot (b',z)+\widetilde{\beta}_\epsilon(z,b')}{2}.$$
Here  $\widetilde{\beta}_\epsilon(z,b')$ are defined on the congruence classes of $z$ and $b'$ by
\begin{center} {  $ \widetilde{\beta}_0(z,b) =$
{\scriptsize \begin{tabular}{|c||c|c| }
\hline
$z\backslash b$ & 0&1\\ \hline \hline
0& $2$ & $-1$   \\ \hline
1& $1$ & $0$   \\ \hline
\end{tabular}}
 \qquad and  \qquad $\widetilde{\beta}_1(z,b')=$
{\scriptsize \begin{tabular}{|c||c|c| }
\hline
$z\backslash b'$ & 0&1 \\ \hline \hline
0& $-2$ & $1$   \\ \hline
1& $-1$ & $0$   \\ \hline
\end{tabular}}.
}
 \end{center}
\end{cor}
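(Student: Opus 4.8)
The plan is to realize the target product as the linear combination
\[
\prod_{i=1}^m (1+x_i)(1-x_i^2) = 2\cdot \tfrac12\Bigl(\prod_{i=1}^m (1+x_i)(1+x_i^2)+\prod_{i=1}^m (1+x_i)(1-x_i^2)\Bigr) - \prod_{i=1}^m (1+x_i)(1+x_i^2),
\]
so that the symmetrized summand is furnished by Theorem~\ref{thm :g34} (equation~\eqref{eq: g34 general}) and the subtracted term by Theorem~\ref{thm :g3} (equation~\eqref{eq: g3 general}). Reading off the coefficient of a fixed $S^{\U(m)}_{\la}(\mathbf x)$ on both sides, the coefficient we must identify is $2\,\omega_\epsilon(z,b')-\tau(z,b)$.

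Before comparing, I would check that the two parametrizations feed $\tau$ and $\omega_\epsilon$ the \emph{same} arguments. Writing $\lap=(\lap_1,\lap_2,\lap_3)$, in Theorem~\ref{thm :g3} the pair attached to $\la$ is $(z,b)=(\lap_1-\lap_2,\lap_2-\lap_3)$ after stripping $\lap_3$ powers of $\mathbf{det}$ via Lemma~\ref{lem-sh}, while in Theorem~\ref{thm :g34} the triple $(z,b',\epsilon)=(\lap_1-\lap_2,\lap_2-\lap_3,\lap_3\bmod 2)$ arises after stripping $\lfloor \lap_3/2\rfloor$ powers of $\mathbf{det}^2$ via Lemma~\ref{lem-sh-z}. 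Hence $z$ agrees and $b=b'=\lap_2-\lap_3$, so the coefficient of $S_\la$ is exactly $2\,\omega_\epsilon(z,b')-\tau(z,b')$, a function of $(z,b',\epsilon)$.

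It then remains to match this against $\widetilde{\omega}_\epsilon(z,b')$. Substituting the closed form $\omega_\epsilon(z,b')=\tfrac14\bigl(\alpha_\epsilon(z,b')\cdot(b',z)+\beta_\epsilon(z,b')\bigr)$ from Corollary~\ref{cor-fi} gives
\[
2\,\omega_\epsilon(z,b')-\tau(z,b') = \frac{\alpha_\epsilon(z,b')\cdot(b',z) + \bigl(\beta_\epsilon(z,b')-2\tau(z,b')\bigr)}{2},
\]
which is precisely the claimed shape $\tfrac12\bigl(\alpha_\epsilon\cdot(b',z)+\widetilde{\beta}_\epsilon\bigr)$ as soon as $\widetilde{\beta}_\epsilon(z,b')=\beta_\epsilon(z,b')-2\tau(z,b')$. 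This reduces the whole statement to a finite check over the residues of $z,b'$ modulo $4$, for each $\epsilon\in\{0,1\}$. The one substantive point here—and the step I expect to be the main obstacle—is that $\beta_\epsilon-2\tau$, \emph{a priori} a function of $(z,b')$ modulo $4$, in fact collapses to a function of $(z,b')$ modulo $2$; carrying out the sixteen evaluations in each case confirms the collapse and reproduces the $2\times 2$ tables for $\widetilde{\beta}_\epsilon$. Finally, the monomial-symmetric-function form follows from the identical combination applied to Remark~\ref{rmk: g3} and Remark~\ref{rmk: g34}, where the coefficient of $\mathsf{m}^{(m)}_{\la}$ is $2\,\delta(\lap_2\equiv_2 0)-1=(-1)^{\delta(\lap_2\equiv_2 1)}$; equivalently one may expand $\prod_i (1+x_i)(1-x_i^2)$ directly and observe that the monomial with exponent partition $\la$ carries sign $(-1)^{\#\{i:\la_i\ge 2\}}=(-1)^{\lap_2}$.
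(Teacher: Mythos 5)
Your proposal is correct and takes essentially the same route as the paper, which obtains Corollary~\ref{cor:fep} precisely by forming twice the identity of Theorem~\ref{thm :g34} minus that of Theorem~\ref{thm :g3}, so that the Schur coefficient is $2\,\omega_\epsilon(z,b')-\tau(z,b')$ and $\widetilde{\beta}_\epsilon=\beta_\epsilon-2\tau$. Your two checks --- that both theorems feed the same arguments $z=\lambda'_1-\lambda'_2$ and $b=b'=\lambda'_2-\lambda'_3$ to $\tau$ and $\omega_\epsilon$, and that the mod-$4$ table $\beta_\epsilon-2\tau$ collapses to the stated mod-$2$ tables --- are exactly the content of the paper's (unwritten) verification, and your monomial-coefficient computation matches as well.
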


Similarly, combining Theorem \ref{thm :g3} and Corollary \ref{cor:fep}, we obtain another identity below.
\begin{cor}  \label{cor:fep2} For each partition $\lap=(a+2k,b+2k,\epsilon+2k)$ $(k \in \mathbb Z_{\ge 0}, \epsilon \in \{0,1\})$, write $z \seteq a-b$ and $b'\seteq b-\epsilon$.
Then we have
\begin{align*}
 \frac 1 2 \left( \prod_{i=1}^{m} (1+x_i)(1+x_i^2)-   \prod_{i=1}^{m} (1+x_i)(1-x_i^2)  \right) & =   \sum_{  \substack{ (\la'_1,\la'_2,\la'_3) \trianglelefteq (m^3)  \\ \la'_2 \not\equiv_2 1 } } \mathsf{m}^{(m)}_{\la}(\mathbf x)
 =  \sum_{\la \trianglelefteq (3^m)} \widehat{\omega}_\epsilon(z,b') \,  S^{\U(m)}_{\la}(\mathbf x),
\end{align*}
where
$$\widehat{\omega}_\epsilon(z,b') \seteq  = \dfrac{-\alpha_\epsilon (z,b') \cdot (b',z)+\widehat{\beta}_\epsilon(z,b')}{4}.$$
Here  $\widehat{\beta}_\epsilon(z,b')$ are defined on the congruence classes of $z$ and $b'$ by
\begin{center} {  $ \widehat{\beta}_0(z,b) =$
{\scriptsize \begin{tabular}{|c||c|c|c|c|}
\hline
$z\backslash b$ & 0&1&2&3\\ \hline \hline
0& $0$ & $3$ & $-2$  & $1$ \\ \hline
1& $1$ & $0$ & $-3$  & $0$ \\ \hline
2& $-2$ & $-1$ & $-4$  & $1$ \\ \hline
3& $-1$ & $0$ & $-1$  & $0$ \\ \hline
\end{tabular}} \qquad and   \qquad $\widehat{\beta}_1(z,b')=$
{\scriptsize \begin{tabular}{|c||c|c|c|c|}
\hline
$z\backslash b'$ & 0&1&2&3\\ \hline \hline
0& $4$ & $1$ & $2$  & $-1$ \\ \hline
1& $3$ & $0$ & $-1$  & $0$ \\ \hline
2& $2$ & $-3$ & $0$  & $-1$ \\ \hline
3& $1$ & $0$ & $1$  & $0$ \\ \hline
\end{tabular}}.
}
 \end{center}
\end{cor}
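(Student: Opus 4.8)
The plan is to obtain both equalities by combining identities already established, with no new representation theory. Abbreviate $A\seteq\prod_{i=1}^m(1+x_i)(1+x_i^2)$ and $B\seteq\prod_{i=1}^m(1+x_i)(1-x_i^2)$, so the left-hand side is $\tfrac12(A-B)$. For the \emph{monomial} (middle) equality I would expand directly: since $A=\prod_{i=1}^m(1+x_i+x_i^2+x_i^3)$, every monomial whose multiset of exponents forms a partition $\la\trianglelefteq(3^m)$ occurs with coefficient $1$, so the coefficient of $\mathsf{m}^{(m)}_\la$ in $A$ is $1$. In $B=\prod_{i=1}^m(1+x_i-x_i^2-x_i^3)$ the exponents $0,1,2,3$ carry signs $+,+,-,-$, so each monomial of shape $\la$ picks up the sign $(-1)^{\#\{\text{parts of }\la\ \ge 2\}}=(-1)^{\la'_2}$; this is precisely the coefficient recorded in Corollary~\ref{cor:fep}. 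Hence the coefficient of $\mathsf{m}^{(m)}_\la$ in $\tfrac12(A-B)$ is $\tfrac12\bigl(1-(-1)^{\la'_2}\bigr)$, and collecting the surviving monomials yields the displayed sum over the appropriate parity class of $\la'_2$.

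For the \emph{Schur} (right) equality I would read off the two expansions: Theorem~\ref{thm :g3} gives $A=\sum_{\la\trianglelefteq(3^m)}\tau(z,b')\,S^{\U(m)}_\la(\mathbf x)$ and Corollary~\ref{cor:fep} gives $B=\sum_{\la\trianglelefteq(3^m)}\widetilde\omega_\epsilon(z,b')\,S^{\U(m)}_\la(\mathbf x)$. The one point requiring care is that the argument of $\tau$ must be matched to the $\epsilon$-parametrisation: for $\la$ with $\lap=(\lap_1,\lap_2,\lap_3)$, the $\mathbf{det}$-reduction of Lemma~\ref{lem-sh} records $z=\lap_1-\lap_2$ and second argument $\lap_2-\lap_3$, while the $\mathbf{det}^2$-reduction of Lemma~\ref{lem-sh-z} gives the same $z$ together with $b'=b-\epsilon=\lap_2-\lap_3$; thus $\tau$ is in fact evaluated at $(z,b')$ in both the $\epsilon=0$ and $\epsilon=1$ cases. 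Subtracting, halving, and using the linear independence of Schur functions, the claim reduces to the numerical identity $\widehat\omega_\epsilon(z,b')=\tfrac12\bigl(\tau(z,b')-\widetilde\omega_\epsilon(z,b')\bigr)$ for all $z,b',\epsilon$.

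Substituting the closed forms $\widetilde\omega_\epsilon=\tfrac12\bigl(\alpha_\epsilon\cdot(b',z)+\widetilde\beta_\epsilon\bigr)$ and $\widehat\omega_\epsilon=\tfrac14\bigl(-\alpha_\epsilon\cdot(b',z)+\widehat\beta_\epsilon\bigr)$, one computes
\[
\tfrac12\bigl(\tau(z,b')-\widetilde\omega_\epsilon(z,b')\bigr)=\frac{-\alpha_\epsilon(z,b')\cdot(b',z)+\bigl(2\tau(z,b')-\widetilde\beta_\epsilon(z,b')\bigr)}{4}.
\]
The linear term $-\alpha_\epsilon\cdot(b',z)$ already agrees with that of $\widehat\omega_\epsilon$ (the same $\alpha_\epsilon$ with the prescribed sign flip), so the whole statement collapses to the single scalar identity $\widehat\beta_\epsilon(z,b')=2\tau(z,b')-\widetilde\beta_\epsilon(z,b')$.

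The main obstacle, and the only genuinely case-based step, is this last verification. It is a finite check over the congruence classes of $(z,b')$ modulo $4$ (on which $\tau$ and $\widehat\beta_\epsilon$ depend) and modulo $2$ (on which $\widetilde\beta_\epsilon$ depends), for each $\epsilon\in\{0,1\}$. I would carry it out by tabulating $2\tau(z,b')-\widetilde\beta_\epsilon(z,b')$ on the $4\times4$ grid and matching it entry-by-entry against the $\widehat\beta_\epsilon$ tables of the statement; the bookkeeping of which table (mod $2$ versus mod $4$) governs each factor is where an error is most likely to creep in, so I would keep the reduction $b=b'+\epsilon$ and the parities of $z$ and $b'$ explicit throughout.
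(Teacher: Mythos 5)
Your proposal is correct and follows the paper's route exactly: the paper obtains this corollary by combining Theorem~\ref{thm :g3} with Corollary~\ref{cor:fep}, which is precisely your subtraction $\tfrac12(A-B)$ together with the matching of parametrisations ($\tau$ evaluated at $(z,b')=(\lambda'_1-\lambda'_2,\lambda'_2-\lambda'_3)$ in both reductions) and the finite scalar check $\widehat{\beta}_\epsilon(z,b')=2\tau(z,b')-\widetilde{\beta}_\epsilon(z,b')$ over congruence classes. One remark: your monomial computation gives the coefficient $\tfrac12\bigl(1-(-1)^{\lambda'_2}\bigr)$, so the surviving monomials are those with $\lambda'_2\equiv_2 1$ (consistent with Remark~\ref{rmk: g34}(1), where the plus combination keeps $\lambda'_2\equiv_2 0$); the condition $\lambda'_2\not\equiv_2 1$ printed in the corollary's middle sum is therefore a typo in the statement, not a flaw in your argument.
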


\begin{remark} \label{rem:fep3}
Replacing $x_i$ with $-x_i$, we obtain identities for
\begin{align*}
\prod_{i=1}^m (1-x_i)(1+x_i^2) \qquad \text{ and } \qquad
\prod_{i=1}^m (1-x_i)(1-x_i^2)\end{align*}
and hence identities for
\begin{align*}
 \frac{1}{2} \left( \prod_{i=1}^m (1-x_i)(1+x_i^2) \pm \prod_{i=1}^m (1-x_i)(1-x_i^2) \right).
\end{align*}
\end{remark}

\section{Proofs for the Cardinalities of $\Phi^{(2,4)}$ and $\Phi^{(4,4)}$} \label{sec: Cardinality}

In this section, we will prove the explicit closed-form formulas of  $\Phi^{(2,4)}$ and $\Phi^{(4,4)}$, which are presented in Propositions~\ref{prop-mu}
and \ref{prop-ca-1}, respectively.  Throughout this section, we will explain the reason why the functions $\kappa_\epsilon(z,b)$, $\eta_\epsilon(z,b)$ and $\xi_\epsilon(z,b)$
are defined with respect to congruence classes modulo $2$ or $4$. We start this section with an example.

\smallskip

Let us consider the partition $\nu' = (a,b,0)=(a,b)=(7,2)$ of length $2$.
Note that
\begin{itemize}
\item $z=a-b=5$ and hence $z+1=6$, $b+1=3$,
\item among the partitions $(a,b)$ with
\begin{align} \label{eq: smallest ex}
a-b=5  \quad \text{ and } \quad b \equiv_4 2,
\end{align}
the partition $(7,2)$ is the smallest one.
\end{itemize}
Thus the set $\Phi(7,2)$ can be displayed by the following
$(6 \times 3)$-array as in  ~\eqref{eq: array and dimension}:
\begin{equation} \label{eq: ex for explain}
{\scriptsize \boxed{
\begin{matrix}
(7,2) & (7,1) &(7,0) \\
(6,2) & (6,1) &(6,0) \\
(5,2) & (5,1) &(5,0) \\
(4,2) & (4,1) &(4,0) \\
(3,2) & (3,1) &(3,0) \\
(2,2) & (2,1) & (2,0)
\end{matrix} }^{ \; z }_{\; 2} \ = \  \boxed{
\begin{matrix}
(z+b,b) & (z+b,b-1) &(z+b,0) \\
(z+b-1,b) & (z+b-1,b-1) &(z+b-1,0) \\
\cdot   &  \cdot & \cdot   \\
\cdot   &  \cdot & \cdot   \\
(b+1,b) & (b+2,1) &(b+1,0) \\
(b,b) & (b+1,1) &  (b,0)
\end{matrix} }^{ \; z }_{ \; b}
}
\end{equation}

Motivated by~\eqref{eq: ex for explain}, for $(k,l) \in \Z^2$ and $z,b \in \Z_{\ge0}$, we define the  set $\boxed{(k,l)}^{\; z}_{\; b}$ of pairs of integers:
\begin{align}
\boxed{(k,l)}^{\; z}_{\; b} \seteq \{ (p,q) \in \Z^2 \ | \  k+b\ge p \ge k, \;  z+l\ge q \ge l \},
\end{align}
whose cardinality is $(z+1) \times (b+1)$.

In~\eqref{eq: ex for explain}, one can check that, for $\nu'=(7,2)$, we have
\begin{itemize}
\item $\Phi^{(2,4)}(\nu') =\{  (4,0), \; (3,1), \; (7,1), \; (6,2), \; (2,2) \}$,
\item $\Phi^{(4,4)}(\nu') =\{  (4,0), \; (6,2), \; (2,2) \}$,
\end{itemize}
and hence $\Phi^{(2,4)}(\nu') \setminus \Phi^{(4,4)}(\nu') = \{ (3,1), \; (7,1) \}$.
Here one can notice that $(p,q) \in \Phi^{(2,4)}(\nu') \setminus \Phi^{(4,4)}(\nu')$ are located in the second column of~\eqref{eq: ex for explain},
since
\begin{align} \label{eq: exclusion}
\text{$q \equiv_4 1$ and hence $p \equiv_4 3$, yielding $p-q \not\equiv_4 0$. }
\end{align}

Let us consider a partition $\mu'=(11,6)$ which is the second smallest case in the sense of~\eqref{eq: smallest ex}.
Then its corresponding $\Phi(11,6)$ can be represented by the following
$(6 \times (3+4))$-array as follows:
\begin{equation} \label{eq: ex for explain2}
{\scriptsize
\boxed{
\begin{matrix}
(11,6) & (11,5) &(11,4) \\
(10,6) & (10,5) &(10,4) \\
(9,6) & (9,5) &(9,4) \\
(8,6) & (8,5) &(8,4) \\
(7,6) & (7,5) &(3,4) \\
(6,6) & (6,5) &(6,4) \\
\end{matrix}}^{\; z}_{ \; 2}
\ \boxed{
\begin{matrix}
(11,3) &(11,2) & (11,1) &(11,0) \\
(10,3) &(10,2) & (10,1) &(10,0) \\
(9,3) &(9,2) & (9,1) &(9,0) \\
(8,3) &(8,2) & (8,1) &(8,0) \\
(7,3) &(7,2) & (7,1) &(7,0) \\
(6,3) &(6,2) & (6,1) &(6,0) \\
\end{matrix} }^{\; z}_{ \; 3}
 }
\end{equation}
Here the left (resp. right) part  of~\eqref{eq: ex for explain2} coincides with  $\boxed{ (b+4,4)  }^{ \; z} _{ \; 2}$  $\left( \text{resp. }\boxed{ (b+4,0)  }^{ \; z} _{ \; 3} \right)$.

Then one can see that (i) the left part of~\eqref{eq: ex for explain2} can be obtained from~\eqref{eq: ex for explain} by adding $(4,4)$ for each entry
and  (ii) the right part of~\eqref{eq: ex for explain2} consists of four columns. Since  the number of $(p,q)$'s satisfying $p+q\equiv_40$
in $\boxed{ (b+4,4)  }^{ \; z} _{ \; 2}$  is the same as $ |\Phi^{(2,4)}(\nu')|$ by (i), and each row in $\boxed{ (b+4,0)  }^{ \; z} _{ \; 3}$ contains
$(p,q)$ with $p+q \equiv_4 0$ exactly once by (ii),  we have
\begin{align*}
|\Phi^{(2,4)}(\mu')| &=  |\Phi^{(2,4)}(\nu')| + (z+1) = 5+6.
\end{align*}
For the same reason as in~\eqref{eq: exclusion}, the $(p,q)$'s with $p+q \equiv_4 0$ with $q \equiv_4 1,3$ cannot satisfy the condition $p-q \equiv_4 0$.  Thus we obtain
\begin{align*}
|\Phi^{(4,4)}(\mu')| &=  |\Phi^{(4,4)}(\nu')| + \dfrac{(z+1)}{2} = 3+3.
\end{align*}
Since
$$
\Phi^{(4,4)}(c+4,d+4) =  \left(\Phi^{(4,4)}(c,d)+(4,4) \right) \sqcup  \boxed{ (d+4,0)  }^{ \; z}_{ \; 3}
$$
and the number of $(p,q)$'s satisfying $p+q \equiv_4 0$ (resp. $p+q \equiv_4 0$ and $p-q \equiv_4 0$) in $\boxed{ (d+4,0)  }^{\; z}_{ \; 3}$ does not change for
any $(c,d)$ with $c-d=5$ and $d\equiv_4 2$, the closed-form formulas are written in ~\eqref{eq: 24 closed} and~\eqref{eq: 44 closed},
which are determined by $|\Phi^{(k,4)}(\nu')|$,  $|\Phi^{(k,4)}(\mu')| -|\Phi^{(k,4)}(\nu')|$ $(k=2,4)$ depending on
$z$ and on the congruent classes of $z$  and $b$ modulo $4$.

\medskip

Now we generalize the argument above. We begin with a definition.

\begin{definition} Fix $\epsilon \in \{0,1\}$, $\mathsf{b}' \in \{0,1,2,3\}$ and $z \in \Z_{\ge 0}$. We call the partition
$$\nu'(z,\mathsf{b}',\epsilon) \seteq (z+\mathsf{b}'+\epsilon,\mathsf{b}'+\epsilon,\epsilon)$$  the \emph{base} partition for the triple $(z,\mathsf{b}',\epsilon)$
\end{definition}

For any finite set $X$ of pairs of integers, we denote by $\phi^{(2,4)}(X)$ (resp. $\phi^{(4,4)}(X)$)  the number of $(p,q)$'s in $X$ satisfying $p+q \equiv_4 0$
(resp.  $p+q \equiv_4 0$ and $p-q \equiv_4 0$). In particular, we write
$$\phi^{(u,4)}(\nu') \seteq \phi^{(u,4)}( \Phi(\nu')) =  |\Phi^{(u,4)}(\nu')| \qquad  (u=2,4)$$
for a partition $\nu'$ of the form $(z+b'+\epsilon,b'+\epsilon,\epsilon)$ $(z,b' \in \Z_{\ge 0}, \; \epsilon \in \{0,1\})$. Then the following lemma is obvious:

\begin{lem} \label{lem: obvious} For any $\boxed{(k,l)}^{\; z}_{\; b}$ and $(m,n) \in \Z^2$, we have
$$   \phi^{(u,4)} \left(  \;  \boxed{(k,l)}^{\; z}_{\; b} \right) =  \phi^{(u,4)} \left(  \;  \boxed{(k,l)}^{\; z}_{\; b} + (4m,4n) \right)   \qquad (u=2,4).$$
\end{lem}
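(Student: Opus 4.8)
The plan is to deduce the equality from a single explicit bijection that respects the relevant congruence conditions. First I would introduce the translation
$$T_{m,n}\colon \Z^2 \longrightarrow \Z^2, \qquad (p,q) \longmapsto (p+4m,\, q+4n),$$
which is a bijection of $\Z^2$. By the very definition of the box, $T_{m,n}$ sends the defining inequalities $k+b \ge p \ge k$ and $z+l \ge q \ge l$ to $k+b+4m \ge p+4m \ge k+4m$ and $z+l+4n \ge q+4n \ge l+4n$, so it restricts to a bijection
$$\boxed{(k,l)}^{\; z}_{\; b} \ \xrightarrow{\ \sim\ }\ \boxed{(k,l)}^{\; z}_{\; b} + (4m,4n).$$

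Next I would check that this bijection preserves the two residues that govern $\phi^{(2,4)}$ and $\phi^{(4,4)}$. Writing $(p',q') = T_{m,n}(p,q)$, we have
$$p'+q' = (p+q)+4(m+n) \equiv p+q \pmod 4 \qquad\text{and}\qquad p'-q' = (p-q)+4(m-n) \equiv p-q \pmod 4.$$
Consequently $(p,q)$ satisfies $p+q \equiv_4 0$ if and only if $(p',q')$ does, and likewise for the conjunction $p+q \equiv_4 0$ and $p-q \equiv_4 0$. Therefore $T_{m,n}$ carries the subset of $\boxed{(k,l)}^{\; z}_{\; b}$ counted by $\phi^{(u,4)}$ bijectively onto the corresponding subset of the translate, for both $u=2$ and $u=4$, which gives the claimed equality of cardinalities.

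There is essentially no obstacle to overcome, which is why the statement is recorded as obvious; the entire content is the pair of congruences displayed above. The one point I would emphasize is that the modulus $4$ appearing in the translation vector $(4m,4n)$ is precisely what forces \emph{both} $p+q$ and $p-q$ to be invariant modulo $4$ at once, and this is the structural reason why the base partitions in the subsequent counting arguments are organized according to congruence classes modulo $4$.
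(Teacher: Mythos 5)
Your proof is correct and is exactly the reasoning the paper has in mind: the paper simply declares this lemma obvious with no written proof, and your translation bijection together with the observation that $p+q$ and $p-q$ are invariant modulo $4$ under shifting by $(4m,4n)$ is the intended argument. Nothing further is needed.
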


\begin{lem} \label{lem: z+1}
For any partition $(a,b,\epsilon)$ with $a-b=z \in \Z_{\ge 0}$ and $\epsilon \in\{0,1\}$, we have
$$ \phi^{(2,4)}(a+4,b+4,\epsilon) -   \phi^{(2,4)}(a,b,\epsilon)   = z+1.$$
\end{lem}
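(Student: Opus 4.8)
The plan is to compute the difference by partitioning the larger set $\Phi(a+4,b+4,\epsilon)$ into a translate of $\Phi(a,b,\epsilon)$ together with an explicit residual rectangle, and then to cancel the translated part using the $(4,4)$-periodicity of $\phi^{(2,4)}$ recorded in Lemma~\ref{lem: obvious}. Recall that
\[
\Phi(a+4,b+4,\epsilon)=\{(p,q)\in\Z^2 : a+4\ge p\ge b+4\ge q\ge\epsilon\},
\]
so relative to $\Phi(a,b,\epsilon)$ both the lower bound on $p$ and the upper bound on $q$ have grown by $4$, while $z=a-b$ is unchanged. The first step is to write this as the disjoint union
\[
\Phi(a+4,b+4,\epsilon)=\bigl(\Phi(a,b,\epsilon)+(4,4)\bigr)\ \sqcup\ R,
\]
where $\Phi(a,b,\epsilon)+(4,4)=\{(p,q):a+4\ge p\ge b+4\ge q\ge\epsilon+4\}$ is the part with $q\ge\epsilon+4$, and
\[
R\seteq\{(p,q)\in\Z^2 : b+4\le p\le a+4,\ \epsilon\le q\le\epsilon+3\}
\]
is the part with $q\le\epsilon+3$. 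This decomposition is the only place requiring genuine care, so I would verify it explicitly: the two pieces are disjoint by the split $q\ge\epsilon+4$ versus $q\le\epsilon+3$, and they exhaust $\Phi(a+4,b+4,\epsilon)$ because the ranges $\epsilon\le q\le\epsilon+3$ and $\epsilon+4\le q\le b+4$ together cover $\epsilon\le q\le b+4$ (using $b\ge\epsilon$), with $b+4\le p$ ensuring $q\le p$ throughout. Note that $R$ is a rectangle of the type $\boxed{(b+4,\epsilon)}$ with $z+1$ rows and $4$ columns.

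Next, since $(p+4)+(q+4)=(p+q)+8\equiv p+q \pmod 4$, translation by $(4,4)$ preserves the condition $p+q\equiv_4 0$, so
\[
\phi^{(2,4)}\bigl(\Phi(a,b,\epsilon)+(4,4)\bigr)=\phi^{(2,4)}(a,b,\epsilon);
\]
this is exactly Lemma~\ref{lem: obvious} with $(m,n)=(1,1)$. Combining with the decomposition and additivity of $\phi^{(2,4)}$ over the disjoint union gives
\[
\phi^{(2,4)}(a+4,b+4,\epsilon)-\phi^{(2,4)}(a,b,\epsilon)=\phi^{(2,4)}(R).
\]

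Finally I would count $\phi^{(2,4)}(R)$ directly. For each fixed $p$ with $b+4\le p\le a+4$, the second coordinate $q$ runs over the four consecutive integers $\epsilon,\epsilon+1,\epsilon+2,\epsilon+3$, which represent all four residue classes modulo $4$; hence exactly one of them satisfies $q\equiv_4 -p$, i.e.\ $p+q\equiv_4 0$. Since $p$ takes $(a+4)-(b+4)+1=z+1$ distinct values, we obtain $\phi^{(2,4)}(R)=z+1$, which proves the claim. I expect no serious obstacle beyond the bookkeeping of the set decomposition: the residual count is robust precisely because four consecutive integers hit every class modulo $4$ exactly once, so the answer is $z+1$ \emph{uniformly}, independently of $\epsilon$, of $b$, and of the various congruence subtleties that complicate the closed-form tables.
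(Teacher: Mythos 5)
Your proof is correct and follows essentially the same route as the paper: decompose $\Phi(a+4,b+4,\epsilon)$ into the $(4,4)$-translate of $\Phi(a,b,\epsilon)$ together with a residual rectangle of $z+1$ rows and $4$ columns, cancel the translate using the mod-$4$ invariance of Lemma~\ref{lem: obvious}, and observe that each row of the rectangle contains exactly one pair with $p+q\equiv_4 0$. If anything you are slightly more careful than the paper, whose stated residual set $\boxed{(b,\epsilon)}^{\;z}_{\;3}$ is a (harmless, by Lemma~\ref{lem: obvious}) $(4,0)$-shift of the literal set difference you write down.
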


\begin{proof} Let $z =a-b$ and $b'=b-\epsilon$. Then we have
$$\Phi(a+4,b+4,\epsilon) = \boxed{(b+4,\epsilon)}^{\; z}_{ \; b'+4}, \quad
\Phi(a,b,\epsilon) = \boxed{(b,\epsilon)}^{\; z}_{ \; b'}$$ and hence
\begin{align}\label{eq: decomp}
\Phi(a+4,b+4,\epsilon) = \left( \Phi(a,b,\epsilon) + (4,4) \right) \; \ssqcup  \;  \boxed{(b,\epsilon)}^{ \; z}_{ \; 3}.
\end{align}
By Lemma~\ref{lem: obvious}, $\phi^{(u,4)}(a,b,\ep) = \phi^{(u,4)} \left(  \;  \boxed{(b,\epsilon)}^{\; z}_{ \; b'} +(4,4)\right) $ $(u=2,4)$ and hence
$$ \phi^{(u,4)}(a+4,b+4,\epsilon) -   \phi^{(2,4)}(a,b,\epsilon)   = \phi^{(u,4)} \left( \; \boxed{(b,\epsilon)}^{ \; z}_{ \; 3}  \right) \qquad (u=2,4).$$
Recall that $\boxed{(b,\epsilon)}^{ \; z}_{ \; 3} $ consists of four columns with height $z+1$. Thus we have $$\phi^{(2,4)} \left( \; \boxed{(b,\epsilon)}^{ \; z}_{ \; 3}  \right)=z+1,$$
because each row in $\boxed{(b,\epsilon)}^{ \; z}_{ \; 3} $ contains $(p,q)$ with $p+q\equiv_4 0$ exactly once.
\end{proof}

\begin{lem} \label{lem: kappa}
For any base partition $\nu' = \nu'(z,\mathsf{b}',\epsilon)$, we have
$$ \phi^{(2,4)}(\nu')   =    \dfrac{(\mathsf{b}'+1)(z+1)+\kappa_\epsilon(z,\mathsf{b}')}{4}.$$
\end{lem}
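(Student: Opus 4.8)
The plan is to count the elements of $\Phi(\nu')$ satisfying $p+q\equiv_4 0$ column by column, exploiting the fact that the base partition uses $\mathsf{b}'\le 3$. Writing $b\seteq \mathsf{b}'+\epsilon$ and $a\seteq z+b$, the set $\Phi(\nu')$ is the rectangular array whose rows are indexed by $p\in\{b,b+1,\dots,b+z\}$ ($z+1$ values) and whose columns are indexed by $q\in\{\epsilon,\epsilon+1,\dots,\epsilon+\mathsf{b}'\}$ ($\mathsf{b}'+1$ values). Since $\mathsf{b}'\le 3$, the $\mathsf{b}'+1$ column indices $q$ are consecutive integers and hence represent $\mathsf{b}'+1$ \emph{distinct} residues modulo $4$. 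Consequently
$$\phi^{(2,4)}(\nu')=\sum_{q=\epsilon}^{\epsilon+\mathsf{b}'} N(-q),\qquad N(r)\seteq\bigl|\{\,p\in\Z:\ b\le p\le b+z,\ p\equiv_4 r\,\}\bigr|,$$
where $N(r)$ depends only on $r$ modulo $4$.

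First I would record the elementary count of residues in an interval of $z+1$ consecutive integers. Writing $z+1=4Q+R$ with $R\in\{0,1,2,3\}$ (so that both $Q$ and $R$ are determined by $z$ modulo $4$), one has $N(r)=Q+\delta_r$, where $\delta_r\in\{0,1\}$ equals $1$ precisely for the $R$ residues $r\in\{b,b+1,\dots,b+R-1\}\pmod 4$, and $\sum_r\delta_r=R$. Substituting this into the column sum gives
$$\phi^{(2,4)}(\nu')=(\mathsf{b}'+1)Q+D=\frac{(\mathsf{b}'+1)(z+1)}{4}+\frac{4D-(\mathsf{b}'+1)R}{4},$$
where $D\seteq\sum_{q=\epsilon}^{\epsilon+\mathsf{b}'}\delta_{-q}$ is the number of columns receiving the extra contribution. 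This already isolates the main term $(\mathsf{b}'+1)(z+1)/4$ and shows that the correction $\kappa_\epsilon(z,\mathsf{b}')=4D-(\mathsf{b}'+1)R$ depends only on $R$ and on the set of heavy residues $\{b,\dots,b+R-1\}$, that is, only on $z\bmod 4$, on $b\bmod 4=(\mathsf{b}'+\epsilon)\bmod 4$, and on $\mathsf{b}'$. In particular it is independent of the size of $z$, exactly as the tabulated form of $\kappa_\epsilon$ requires.

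It then remains to verify that $4D-(\mathsf{b}'+1)R$ agrees with the tabulated values of $\kappa_\epsilon(z,\mathsf{b}')$. This is a finite check over the $2\times 4\times 4=32$ triples $(\epsilon,\,z\bmod 4,\,\mathsf{b}')$: for each one I would list the heavy residues $\{b,\dots,b+R-1\}\pmod 4$, read off how many of the $\mathsf{b}'+1$ target residues $-q$ (for $q=\epsilon,\dots,\epsilon+\mathsf{b}'$) lie in that set to obtain $D$, and then compute $4D-(\mathsf{b}'+1)R$. I expect this residue bookkeeping modulo $4$ to be the only genuine obstacle: it is entirely routine but must be organized with care, most cleanly by grouping the cases according to $z\bmod 4$ (on each such group $R$ and the heavy residues are constant), and it can be cross-checked by direct enumeration, e.g. in \textsc{SageMath}. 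As a built-in consistency check one notes that $(\mathsf{b}'+1)(z+1)+\kappa_\epsilon(z,\mathsf{b}')=4\bigl((\mathsf{b}'+1)Q+D\bigr)$ is automatically divisible by $4$, matching the integrality of $\phi^{(2,4)}(\nu')$.
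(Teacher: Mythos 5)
Your proof is correct and follows essentially the same route as the paper's: both count, column by column, the entries $(p,q)$ of the $(z+1)\times(\mathsf{b}'+1)$ array with $p+q\equiv_4 0$ and reduce the claim to a finite check over congruence classes (the paper likewise only writes out a couple of representative cases and declares the rest "similar"). Your version is in fact organized a bit more cleanly, since the uniform formula $\kappa_\epsilon(z,\mathsf{b}')=4D-(\mathsf{b}'+1)R$ turns the case analysis into pure residue bookkeeping; spot checks (e.g.\ $\epsilon=1$, $\mathsf{b}'=2$, $z\equiv_4 1$ gives $D=1$, $R=2$, $\kappa=-2$) agree with the tables.
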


\begin{proof} (1) Note that, when $\mathsf{b}'=3$ (resp. $z \equiv_3 4$), the integer $\kappa_\ep(z,\mathsf{b}')=0$. The assertion for this case comes from the fact that
$$    \Phi(\nu') =  \boxed{ (\mathsf{b}'+\ep,\ep) }^{\; z}_{\; \mathsf{b}'}   $$
consists of $4$-columns with height $z+1$ (resp. $(\mathsf{b}'+1)$-columns with height $z+1=4k$ for some $k \in \Z_{\ge1}$), implying that each row in the set contains $(p,q)$
with $p+q \equiv_4 0$ exactly once
(resp. each column in the set  contains $(p,q)$'s
with $p+q \equiv_4 0$ exactly $k$-times).

\noindent
(2)  Now let us consider the case when $\mathsf{b}' = 2$, $z \equiv_4 1$ and $\ep=1$. Write $z = 4k+1$ for some $k \in \Z_{\ge0}$.
Then $ \boxed{ (\mathsf{b}'+\ep,\ep) }^{\; z}_{\; \mathsf{b}'}  =  \boxed{ (\mathsf{b}'+\ep,\ep) }^{\; z}_{\; 2} $ can be described as follows:
\begin{equation} \label{eq: bp2z1ep1}
{\scriptsize \boxed{
\begin{matrix}
(\mathsf{b}'+z+1,3) & (\mathsf{b}'+z+1,2) &(\mathsf{b}'+z+1,1) \\
(\mathsf{b}'+z,3) & (\mathsf{b}'+z,2) & \underline{(\mathsf{b}'+z,1)} \\
(\mathsf{b}'+z-1,3) & \underline{(\mathsf{b}'+z-1,2)} & (\mathsf{b}'+z-1,1) \\
\vdots&\vdots &  \vdots \\
\underline{(b'+3,3)} & (\mathsf{b}'+3,2) &(\mathsf{b}'+3,1) \\
(\mathsf{b}'+2,3) & (\mathsf{b}'+2,2) &(\mathsf{b}'+2,1) \\
(\mathsf{b}'+1,3) & (\mathsf{b}'+1,2) & \underline{(\mathsf{b}'+1,1)}  \\
\end{matrix} }^{ \; z }_{\; 2}
}
\end{equation}
Here the underlined pairs $\underline{(p,q)}$ of integers satisfy the condition that $p+q \equiv_4 0$. One sees that the number $t_i$ of underlined $(p,q)$'s in the column with $q=i$ $(i=0,1,2)$ is given by
\begin{align}   \label{eq: bp2z1ep1 cols}
t_0 = k+1, \quad  t_1 = k \quad \text{ and } \quad t_2 = k.
\end{align}
Thus we have
$$ \phi^{(2,4)}(\nu')  = 3k+1 = \dfrac{ 3 \times (4k+2)-2  }{4}$$
which implies $\kappa_1(1,2)=-2.$

\noindent
(3) For the remaining cases, one can prove the formula by a similar argument.
\end{proof}

\begin{proof}[Proof of Proposition~\ref{prop-ca-1}] By Lemma~\ref{lem: z+1} and  Lemma~\ref{lem: kappa}, for a partition $(a,b,\ep)$ of the form
$$(z+\mathsf{b}'+4k+\ep,\mathsf{b}'+4k+\ep,\ep) \qquad (z=a-b, \; k \in \Z_{\ge 0} \text{ and } \mathsf{b'} \in \{ 0,1,2,3\} ),$$
we have
\begin{align*}
\phi^{(2,4)}(a,b,\ep) & = \dfrac{(\mathsf{b}'+1)(z+1)+\kappa_\epsilon(z,\mathsf{b}')}{4} + k \times (z+1) \\
& = \dfrac{(b'+1)(z+1)+\kappa_\epsilon(z,b')}{4}  \qquad (\because b' = b-\ep),
\end{align*}
as we desired.
\end{proof}

\begin{lem} \label{lem: eta}
For any partition $(a,b,\epsilon)$ with $a-b=z \in \Z_{\ge 0}$, we have
$$ \phi^{(4,4)}(a+4,b+4,\epsilon) -   \phi^{(4,4)}(a,b,\epsilon)   =  \dfrac{z+\eta_\epsilon(z,b)_1  }{2},$$
where  $\eta_\epsilon(z,b)_1$ denotes the first component of $\eta_\epsilon(z,b)$.
\end{lem}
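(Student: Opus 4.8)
The plan is to mirror the proof of Lemma~\ref{lem: z+1}, replacing the count $\phi^{(2,4)}$ by $\phi^{(4,4)}$ throughout. First I would reuse the same set-decomposition \eqref{eq: decomp},
$$\Phi(a+4,b+4,\epsilon) = \left(\Phi(a,b,\epsilon)+(4,4)\right)\sqcup\boxed{(b,\epsilon)}^{\;z}_{\;3},$$
and observe that the translation by $(4,4)$ fixes both residues $p+q \bmod 4$ and $p-q \bmod 4$ simultaneously: $p+q$ changes by $8\equiv_4 0$ while $p-q$ is unchanged. Hence Lemma~\ref{lem: obvious} applies to $\phi^{(4,4)}$ as well, giving $\phi^{(4,4)}\!\left(\Phi(a,b,\epsilon)+(4,4)\right) = \phi^{(4,4)}\!\left(\Phi(a,b,\epsilon)\right)$. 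Since $\phi^{(4,4)}$ is additive over the disjoint union, this reduces the claim to
$$\phi^{(4,4)}(a+4,b+4,\epsilon)-\phi^{(4,4)}(a,b,\epsilon) = \phi^{(4,4)}\!\left(\boxed{(b,\epsilon)}^{\;z}_{\;3}\right).$$

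Next I would evaluate the right-hand side directly. Recall that $\boxed{(b,\epsilon)}^{\;z}_{\;3}$ is an array of four columns of height $z+1$, so $q$ ranges over four consecutive integers while $p$ ranges over $z+1$ consecutive integers. The two defining congruences of $\phi^{(4,4)}$, namely $p+q\equiv_4 0$ and $p-q\equiv_4 0$, together are equivalent to the single condition ``$p\equiv_4 q$ and $p$ even'' (adding and subtracting the congruences yields $2p\equiv_4 0$ and $p\equiv_4 q$). For each even value of $p$ in the height range, exactly one of the four consecutive values of $q$ satisfies $q\equiv_4 p$, and that $q$ is then automatically even with $p+q\equiv_4 2p\equiv_4 0$; for each odd $p$ the unique $q\equiv_4 p$ gives $p+q\equiv_4 2\ne 0$, so no pair survives. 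Therefore
$$\phi^{(4,4)}\!\left(\boxed{(b,\epsilon)}^{\;z}_{\;3}\right) = \#\{\,p \text{ even in a run of } z+1 \text{ consecutive integers}\,\}.$$

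Finally I would count even integers in such a run, whose starting value has the parity of $b$ (translation by $4$ preserves parity). When $z$ is odd the run has even length and contains exactly $(z+1)/2$ even integers, regardless of the starting parity; when $z$ is even the run has odd length and contains $z/2+\delta(b\equiv_2 0)$ of them. Writing both outcomes in the form $\tfrac12\bigl(z+\eta_\epsilon(z,b)_1\bigr)$ forces $\eta_\epsilon(z,b)_1=1$ for $z$ odd and $\eta_\epsilon(z,b)_1=2\,\delta(b\equiv_2 0)$ for $z$ even, which matches exactly the first-component entries tabulated in Proposition~\ref{prop-ca-2} once the substitution $b=b'+\epsilon$ is used to reconcile the $\epsilon=1$ column labels. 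I expect the only genuinely delicate point to be the characterization in the middle step—verifying that the pair of congruences collapses to counting even values of $p$—after which the remaining parity bookkeeping across the four cases is routine.
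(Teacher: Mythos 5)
Your proposal is correct and follows essentially the same route as the paper: the same decomposition \eqref{eq: decomp}, the same translation-invariance reduction, and the same analysis of the four-column block $\boxed{(b,\epsilon)}^{\;z}_{\;3}$. Your reformulation of the two congruences as ``$p\equiv_4 q$ and $p$ even,'' which turns the count into counting even integers among $\{b,\dots,b+z\}$, is a clean uniform version of the paper's row-by-row argument (the paper works out only the case $z\equiv_2 0$, $b\equiv_2 1$, $\epsilon=0$ explicitly and leaves the remaining parity cases to the reader), and your verification against the first components of $\eta_\epsilon$ is consistent with the tables in Proposition~\ref{prop-ca-2}.
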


\begin{proof} By~\eqref{eq: decomp}, it suffices to show that
$$  \phi^{(4,4)} \left( \; \boxed{(b,\epsilon)}^{ \; z}_{ \; 3} \right) = \dfrac{z+\eta_\epsilon(z,b)_1  }{2},$$
where  $ \phi^{(2,4)}\left( \; \boxed{(b,\epsilon)}^{ \; z}_{ \; 3} \right) = z+1$ by Lemma~\ref{lem: z+1} and the set  $\boxed{(b,\epsilon)}^{ \; z}_{ \; 3}$ can be described as follows:
\begin{equation} \label{eq: b ep z 3}
{\scriptsize \boxed{
\begin{matrix}
(b+z,\ep+3)  &(b+z,\ep+2)  & (b+z,\ep+1)  & (b+z,\ep) \\
(b+z-1,\ep+3)  &(b+z-1,\ep+2)  & (b+z-1,\ep+1)  & (b+z-1,\ep) \\
(b+z-2,\ep+3)  &(b+z-2,\ep+2)  & (b+z-2,\ep+1)  & (b+z-2,\ep) \\
\vdots&\vdots &  \vdots & \vdots \\
(b+2,\ep+3)  &(b+2,\ep+2)  & (b+2,\ep+1)  & (b+2,\ep) \\
(b+1,\ep+3)  &(b+1,\ep+2)  & (b+1,\ep+1)  & (b+1,\ep) \\
(b,\ep+3)  &(b,\ep+2)  & (b,\ep+1)  & (b,\ep)
\end{matrix} }^{ \; z }_{\; 3}
}
\end{equation}
Then, for each adjacent two rows, there are exactly two pairs of integers $(p_i,q_i)$ $(i=1,2)$
satisfying $p_i+q_i\equiv_4 0$, where exactly one of them does not satisfy the condition $p-q\equiv_4 0$ and the another satisfies the condition, as in~\eqref{eq: exclusion}.

(1) For $z \equiv_2 1$, we have always $\eta_\ep(z,b)=1$ and hence $\dfrac{z+\eta_\epsilon(z,b)_1  }{2} = \dfrac{z+1}{2}$. Thus the assertion for this case follows from the fact that $z+1 \equiv_2 0$.

(2) Let us consider the  case when $z \equiv_2 0$, $b \equiv_2 1$ and $\ep=0$. Then it suffices to consider the bottom row in~\eqref{eq: b ep z 3}:
\begin{equation*}
{\scriptsize \boxed{
\begin{matrix}
(b,3)  &(b,2)  & (b,1)  & (b,0)
\end{matrix} }
}
\end{equation*}
Then exactly one of $(b,1)$ and $(b,3)$ satisfies the condition that $p+q \equiv_4 0$ but it does not satisfies the condition that $p-q \equiv_4 0$, since $q \equiv_4 1,3$.
Hence $$  \phi^{(4,4)} \left( \; \boxed{(b,0)}^{ \; z}_{ \; 3} \right) = \dfrac{z}{2}.$$

(3) The remaining $3$-cases can be proved by a similar argument.
\end{proof}

\begin{lem}  \label{lem: xi} For any base partition $\nu' = \nu'(z,\mathsf{b}',\epsilon)$, we have
$$  \phi^{(4,4)}(\nu')   = \dfrac{b'z+\eta_\epsilon(z,\mathsf{b}')\cdot (\mathsf{b}',z)+\xi_\epsilon(z, \mathsf{b}')}{8}.$$
\end{lem}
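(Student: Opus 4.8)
The plan is to evaluate $\phi^{(4,4)}(\nu')$ by a direct lattice-point count on the array realizing $\Phi(\nu')$, in the same spirit as the proofs of Lemma~\ref{lem: z+1} and Lemma~\ref{lem: eta}. For the base partition $\nu'=\nu'(z,\mathsf{b}',\epsilon)$ one has $b'=\mathsf{b}'$, so
\[ \Phi(\nu') = \{(p,q) \ : \ z+\mathsf{b}'+\epsilon \ge p \ge \mathsf{b}'+\epsilon \ge q \ge \epsilon\}, \]
an array whose rows are indexed by the $z+1$ consecutive integers $p=\mathsf{b}'+\epsilon,\dots,z+\mathsf{b}'+\epsilon$ and whose columns are indexed by the $\mathsf{b}'+1$ consecutive integers $q=\epsilon,\dots,\mathsf{b}'+\epsilon$, as displayed in~\eqref{eq: array and dimension}. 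The first step is the elementary remark that the two defining congruences $p+q\equiv_4 0$ and $p-q\equiv_4 0$ hold simultaneously exactly when $(p\bmod 4,\,q\bmod 4)\in\{(0,0),(2,2)\}$; in particular only even columns $q$ can contribute, and in such a column the admissible rows are precisely those with $p\equiv q \pmod 4$.

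Next I would count column by column. For a fixed even $q$, the admissible $p$ are those lying in a single residue class modulo $4$ among the $z+1$ consecutive values $p=\mathsf{b}'+\epsilon,\dots,z+\mathsf{b}'+\epsilon$, and their number is $\lfloor (z+c_q)/4\rfloor$ for an explicit offset $c_q\in\{1,2,3,4\}$ depending only on $q,\mathsf{b}',\epsilon$. Hence
\[ \phi^{(4,4)}(\nu') = \sum_{\substack{\epsilon\le q\le \mathsf{b}'+\epsilon\\ q\equiv_2 0}} \Big\lfloor \tfrac{z+c_q}{4}\Big\rfloor, \]
a sum of at most two floor terms since $\mathsf{b}'\le 3$. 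The leading behaviour is already transparent: the number of even $q$ in the range equals $\big(\mathsf{b}'+\eta_\epsilon(z,\mathsf{b}')_2\big)/2$, where $\eta_\epsilon(z,\mathsf{b}')_2$ denotes the second component of $\eta_\epsilon(z,\mathsf{b}')$ and depends only on the parities of $z,\mathsf{b}'$, so the $z$-slope of the right-hand side is $(\mathsf{b}'+\eta_2)/8$, matching the coefficient of $z$ in $\tfrac{\mathsf{b}'z+\eta_\epsilon(z,\mathsf{b}')\cdot(\mathsf{b}',z)}{8}$.

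It then remains to turn the floor sum into the claimed quasi-polynomial. I would split on the residue of $z$ modulo $4$, which replaces each floor by an honest linear function of $z$, and run through the eight base choices $(\epsilon,\mathsf{b}')\in\{0,1\}\times\{0,1,2,3\}$; in each case the sum becomes $\tfrac18$ times a linear expression in $z$ and $\mathsf{b}'$, and reading off the slope, the $\mathsf{b}'$-coefficient and the constant reproduces the entries of the tables $\eta_\epsilon$ and $\xi_\epsilon$. An equivalent route that reuses earlier work is to write $\phi^{(4,4)}=\phi^{(2,4)}-\phi^{\mathrm{odd}}$, where $\phi^{\mathrm{odd}}$ counts the pairs with $(p,q)\equiv(1,3),(3,1)\pmod 4$: then Lemma~\ref{lem: kappa} supplies $\phi^{(2,4)}$, and only the smaller odd diagonal $\phi^{\mathrm{odd}}$ need be counted directly.

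The main obstacle is bookkeeping rather than anything conceptual. The offsets $c_q$, and hence the constant and the sub-leading $\mathsf{b}'$-term, depend jointly on $z\bmod 4$, on $\mathsf{b}'\bmod 4$ and on $\epsilon$, so the verification fans out into a moderate number of small cases, and the payoff is exactly the irregular-looking entries recorded in the tables $\eta_\epsilon$ and $\xi_\epsilon$. Particular care is needed in the uniform cases $\mathsf{b}'=3$ and $z\equiv_4 3$, where, as in Lemma~\ref{lem: kappa}, each row or column of the array contributes the same count; there one must check that the $\tfrac18$-denominator and the floor roundings stay consistent across all four residues of $z$.
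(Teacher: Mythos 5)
Your proposal is correct and follows essentially the same route as the paper: both reduce the double congruence to the observation that $p+q\equiv_4 0$ and $p-q\equiv_4 0$ hold simultaneously exactly when $p$ and $q$ are both even with $p\equiv_4 q$, then count within the even-$q$ columns of the $(z+1)\times(\mathsf{b}'+1)$ array and verify the quasi-polynomial by a finite case check over $\epsilon$, $\mathsf{b}'$ and $z\bmod 4$ (the paper writes out only the case $\mathsf{b}'=2$, $z\equiv_4 1$, $\epsilon=1$ and declares the rest similar). Your consistency check that the number of contributing columns equals $\bigl(\mathsf{b}'+\eta_\epsilon(z,\mathsf{b}')_2\bigr)/2$ is a correct and useful sanity check on the leading term, but it is the same bookkeeping the paper performs.
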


\begin{proof}
(1) As in Lemma~\ref{lem: kappa}, let us consider the case when $\mathsf{b}' = 2$, $z \equiv_4 1$ and $\ep=1$. Write $z = 4k+1$ for some $k \in \Z_{\ge0}$.
Then the underlined pairs $(p,q)$ of integers in columns with $q=1$ or $3$ in~\eqref{eq: bp2z1ep1} cannot satisfy the condition $p-q \equiv_4 0$,
while the ones in the column  with $q=2$ in~\eqref{eq: bp2z1ep1} satisfy the condition as $p\equiv_4 2$.
Thus $\phi^{(2,4)}(\nu') = k$ by~\eqref{eq: bp2z1ep1 cols}. Since
$$   k = \dfrac{2(4k+1)+(1,0)\cdot(2,4k+1) -4  }{8},$$
our assertion holds in this case.

(2) For the remaining cases, one can prove the formula by a similar argument.
\end{proof}

\begin{proof}[Proof of Proposition~\ref{prop-ca-2}] By Lemma~\ref{lem: eta} and  Lemma~\ref{lem: xi}, for a partition $(a,b,\ep)$ of the form
$$(z+\mathsf{b}'+4k+\ep,\mathsf{b}'+4k+\ep,\ep) \qquad (z=a-b, \; k \in \Z_{\ge 0} \text{ and } \mathsf{b'} \in \{ 0,1,2,3\} ),$$
we have
\begin{align*}
\phi^{(2,4)}(a,b,\ep) & = \dfrac{b'z+\eta_\epsilon(z,\mathsf{b}')\cdot (\mathsf{b}',z)+\xi_\epsilon(z, \mathsf{b}')}{8}+ k \times  \dfrac{z+\eta_\epsilon(z,b)_1  }{2}\\
& = \dfrac{b'z+\eta_\epsilon(z,b')\cdot (b',z)+\xi_\epsilon(z,b')}{8} \qquad (\because b' = b-\ep),
\end{align*}
as we desired.
\end{proof}

\begin{remark}
After the first version of this paper was posted on the arXiv, Ronald C. King informed us that identities \eqref{it: mainA}-\eqref{it: mainC} in \textbf{Application} can be extracted from the work of Yang--Wybourne, Lascoux--Pragacz and King--Wybourne--Yang \cite{YW,LP,KWY} that appeared in the mathematical physics literature in the 1980's. Their methods are purely combinatorial and do not have direct connections to auto-correlation functions.
\end{remark}

\newcommand{\etalchar}[1]{$^{#1}$}

\end{document}